\documentclass[pdf, 10pt]{amsart}

\usepackage{amsthm, amsmath, amssymb}
\usepackage[OT1]{fontenc}
\usepackage[colorlinks,citecolor=blue,urlcolor=blue]{hyperref}
\usepackage{color}

\newtheorem{thm}[equation]{Theorem}

\newtheorem{defn}[equation]{Definition}

\newtheorem{lem}[equation]{Lemma}

\def\R{{\mathbb{R}}}

\def\C{{\mathbb{C}}}
\def\N{{\mathbb{N}}}
\def\Z{{\mathbb{Z}}}

\renewcommand{\a}{\alpha}
\renewcommand{\b}{\beta}
\newcommand{\g}{\gamma}

\renewcommand{\d}{\delta}

\newcommand{\eps}{\epsilon}
\newcommand{\e}{\varepsilon}

\renewcommand{\t}{\tau}

\newcommand{\te}{\theta}
\newcommand{\s}{\sigma}

\newcommand{\vp}{\varphi}

\newcommand{\8}{\infty}

\newcommand{\vt}{\vartheta}
\newcommand{\vr}{\varrho}

\newcommand{\ep}[1]{e^{2\pi i#1}}
 \numberwithin{equation}{section}

\setlength{\textwidth}{15 cm} \setlength{\evensidemargin}{1cm} \setlength{\oddsidemargin}{1cm}

\begin{document}

\title[] {$\ell^p(\mathbb{Z})$ -- boundedness of discrete maximal functions along thin subsets of primes and pointwise ergodic theorems}
\author[M.Mirek]
{Mariusz Mirek}
\address{M.Mirek \\
Universit\"{a}t Bonn \\
Mathematical Institute\\
Endenicher Allee 60\\
D--53115 Bonn \\
Germany \&\\
Uniwersytet Wroc{\l}awski\\
Mathematical Institute \\
Plac Grunwaldzki 2/4\\
 50--384 Wroc{\l}aw\\
 Poland}
 \email{mirek@math.uni.bonn-de}

\thanks{
The author was supported by NCN grant DEC--2012/05/D/ST1/00053}

\maketitle

\begin{abstract}
We establish the first pointwise ergodic theorems
along thin sets of prime numbers; a set with zero density
with respect to the primes. For instance we will be able
to achieve this with the Piatetski--Shapiro primes. Our
methods will be robust enough to solve the ternary Goldbach
problem for some thin sets of primes.
\end{abstract}

\section{Introduction and statement of results}

In the middle 1980's,  Bourgain and  Wierdl generalized
Birkhoff's pointwise ergodic theorem, showing that almost everywhere
convergence still holds when averages are taken only along the set of prime
numbers $\mathbf{P}$. More precisely, let $(X, \mathcal{B}, \mu, T)$ be a general dynamical
system where $T$ is an invertible,  measure preserving transformation
on the $\sigma$--finite measure space $(X, \mathcal{B}, \mu)$. Then, for any $ f\in L^r(X,\mu)$ where $r>1$, the limit
\begin{align}\label{ergprim}
 \lim_{N\to\infty} \frac{1}{|\mathbf{P}\cap[1, N]|}  \sum_{p\in \mathbf{P}\cap[1, N]} f(T^p x),
\end{align}
exists for $\mu$--almost every $x\in X$.
See \cite{B3} and \cite{Wie}. At almost the same time Nair \cite{Na1, Na2} showed that \eqref{ergprim} remains still valid when $T^p$ is replaced with $T^{W(p)}$, where $W$ is an arbitrary integer valued polynomial.
The restriction to the range $r>1$ is essential as
LaVictoire \cite{LaV1}, extending work of Buczolich and Mauldin \cite{BM}, showed
the pointwise convergence result \eqref{ergprim} fails on $L^1(X,\mu)$.

  Since the work of Bourgain and Wierdl, there have been many results
   establishing both  pointwise ergodic theorems along various arithmetic subsets
of the integers and investigating discrete
analogues of classical operators with arithmetic features; see
\cite{BKQW, IMSW, IW,  MSW, M2, O, P, RW, SW1, SW2, SW3, SW4, UZ, W}. However, not many have been proved for the subsets of primes; see \cite{Na1, Na2, Wie} and recently \cite{MT}.

The  aim of this article is to extend the result of Bourgain \cite{B3}, Wierdl \cite{Wie} and Nair \cite{Na1, Na2} to the case where the ergodic averages operators \eqref{ergprim} are defined along appropriate subsets of primes.

One of the main objects will be the set of Piatetski--Shapiro primes $\mathbf{P}_{\g}$
of fixed type $\g<1$, where $\g$ is sufficiently close to $1$, i.e.
$$\mathbf{P}_{\g}=\{p\in\mathbf{P}: \exists_{n\in\N}\
p=\lfloor n^{1/{\g}}\rfloor\}.$$

\noindent In particular we will show  the pointwise ergodic theorem along the set of Piatetski--Shapiro primes.
\begin{thm}\label{ergo1}
Let $(X, \mathcal{B}, \mu, T)$ be a dynamical system and let $W:\Z\mapsto\Z$ be a fixed  polynomial of degree $q\in\N$. Then there exists $0<\g_q<1$ such that for every $\g_q<\g<1$ the limit
\begin{align*}
 \lim_{N\to\infty} \frac{1}{|\mathbf{P}_{\g}\cap[1, N]|}  \sum_{p\in \mathbf{P}_{\g}\cap[1, N]} f(T^{W(p)} x),
\end{align*}
exists $\mu$--almost everywhere on $X$, for every $f\in L^r(X, \mu)$ with $r>1$.
\end{thm}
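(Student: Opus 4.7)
The plan is to use Calderón's transference principle to reduce Theorem \ref{ergo1} to a statement about the discrete averages
\[
A_N f(x) = \frac{1}{|\mathbf{P}_{\gamma} \cap [1,N]|} \sum_{p \in \mathbf{P}_{\gamma} \cap [1,N]} f(x - W(p)), \qquad x \in \mathbb{Z},
\]
acting on $\ell^r(\mathbb{Z})$. Concretely it suffices to prove that for every $r > 1$ the maximal operator $\sup_N |A_N f|$ is bounded on $\ell^r(\mathbb{Z})$ \emph{and} that $A_N f(x)$ converges pointwise as $N \to \infty$ for every $f \in \ell^r(\mathbb{Z})$. The latter is customarily obtained from an oscillation inequality along a lacunary sequence $N_j = \lfloor \tau^j \rfloor$, which combined with the maximal inequality upgrades convergence from a dense subclass (finitely supported $f$, for which the averages trivially tend to $0$) to all of $\ell^r(\mathbb{Z})$.

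The heart of the matter is Fourier analysis of the associated multiplier. Using the characterization
\[
\mathbf{1}_{\mathbf{P}_\gamma}(n) = \mathbf{1}_{\mathbf{P}}(n)\bigl( \lfloor -n^\gamma\rfloor - \lfloor -(n+1)^\gamma\rfloor\bigr),
\]
I would split $\mathbf{1}_{\mathbf{P}_\gamma}(n) = \gamma n^{\gamma-1} \mathbf{1}_{\mathbf{P}}(n) + \mathbf{1}_{\mathbf{P}}(n)\eta(n)$, where the error $\eta$ is expanded via the Fourier series of the sawtooth function $\psi(x) = x - \lfloor x\rfloor - \tfrac12$ as a superposition of oscillations $e^{2\pi i k n^\gamma}$, $k \in \mathbb{Z} \setminus \{0\}$. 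The first piece produces a smooth von-Mangoldt-type weight that can be handled by the Bourgain--Wierdl--Nair circle-method strategy (major/minor arc decomposition, Gauss sums $q^{-1} \sum_{m=1}^q e^{2\pi i (a/q) W(m)}$ multiplied by continuous-scale multipliers, plus the Magyar--Stein--Wainger lifting lemma) to yield $\ell^r$ maximal bounds. The oscillatory pieces, however, require control on exponential sums of the form
\[
S_N(\xi, k) = \sum_{n \leq N} \Lambda(n)\, e^{2\pi i (\xi W(n) + k n^\gamma)},
\]
uniformly in $\xi$ and with enough decay in $k$ to be summable after the Fourier truncation.

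Such exponential sums over primes twisted by $n^\gamma$ would be attacked by Vaughan's identity, reducing to Type~I and Type~II bilinear sums, to which one applies Weyl differencing and van der Corput / exponent-pair estimates exploiting the noninteger exponent $\gamma$. The savings obtained this way determine the threshold $\gamma_q$: each additional degree in $W$ forces an extra round of Weyl differencing and eats into the budget that the factor $n^\gamma$ provides, so $\gamma_q \to 1$ as $q \to \infty$. Plugging the resulting bounds into a major/minor arc decomposition produces a multiplier representation $m_N(\xi) = \sum_s \Psi_N^s(\xi) + \text{error}$ of the form "Gauss sum $\times$ continuous-scale multiplier", from which both the maximal inequality and, via Bourgain's high/low-frequency argument on $\ell^2$ followed by interpolation with the trivial $\ell^\infty$ bound, the oscillation inequality can be extracted.

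The main obstacle I anticipate is exactly the exponential-sum bound in the previous paragraph: one needs power-saving estimates on $S_N(\xi, k)$ that are (a) uniform in $\xi$ on minor arcs, (b) summable after truncation over the Fourier index $k$ coming from the sawtooth expansion, and (c) robust under polynomial composition $W(n)$ of arbitrary degree $q$. This is the mechanism that both pins down $\gamma_q$ and distinguishes the Piatetski--Shapiro setting from the Bourgain--Wierdl result for the full set of primes; everything downstream (circle method, lifting lemma, oscillation bound, Calderón transference) is comparatively standard once these twisted prime-exponential sums are in hand.
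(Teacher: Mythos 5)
Your outline is sound and its core novelty coincides with the paper's: write $\mathbf{1}_{\mathbf{P}_\gamma}$ as a smooth density factor times $\mathbf{1}_{\mathbf{P}}$ plus a sawtooth error, expand the sawtooth in a Fourier series, and beat the resulting twisted prime sums $\sum_{n\le N}\Lambda(n)e^{2\pi i(\xi W(n)+kn^{\gamma})}$ by Vaughan's identity plus Van der Corput/bilinear estimates, the power saving dictating $\gamma_q$; this is exactly Lemmata \ref{lemest0}--\ref{lemest1}, \ref{finboundlem} and \ref{billem}, culminating in Lemma \ref{formlem}. Where you diverge is the main term: you propose to re-run the whole Bourgain--Wierdl--Nair circle-method machinery (major/minor arcs, Gauss sums, the Magyar--Stein--Wainger lifting, Bourgain's high/low decomposition) for the von-Mangoldt-weighted averages. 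The paper deliberately avoids the circle method altogether: after a summation-by-parts reduction (Lemmata \ref{estmax} and \ref{lempart1}) it compares the weighted Piatetski--Shapiro kernel $K^1_{h,N}$ directly with the full-prime kernel $K^2_{h,N}$, uses Lemma \ref{formlem} plus Plancherel to get $\|(K^1_{h,N}-K^2_{h,N})*f\|_{\ell^2}\lesssim N^{-\chi}\|f\|_{\ell^2}$, interpolates with the trivial $\ell^1\to$ Young bound to get summable $\ell^r$ decay over dyadic $N$, and then quotes Bourgain--Wierdl--Nair for the maximal inequality and Nair for the oscillation inequality \eqref{ergosc} as black boxes; transference then gives a.e.\ convergence on $L^2(X,\mu)$, and the maximal bound extends it to $L^r$, with a final Abel-summation step passing from the weighted averages $A^1_{h,N}$ back to $A_{h,N}$. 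Your route would work (it amounts to reproving the known prime-polynomial theorem alongside the new error estimate), but it is heavier; note also that your anticipated need for uniformity ``on minor arcs'' undersells what one actually gets: since $W$ has degree $q$, the $(q+1)$-fold Van der Corput differencing annihilates the phase $\xi W$ entirely, so the comparison estimate is uniform in \emph{all} $\xi\in[0,1]$, and this full uniformity is precisely what lets the paper dispense with any arc decomposition for the error term.
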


In view of the transference principle, one can transpose our problem and work with the set of integers
rather than an abstract measure space $X$. In these settings we will consider the averages
\begin{align}\label{maxs}
 M_{S, N}f(x)=\frac{1}{|S\cap[1, N]|}\sum_{p\in
S\cap[1, N]}f(x-W(p)),\ \ \mbox{for \ $x\in\Z$},
\end{align}
along a fixed set $S\subseteq \mathbf{P}$, where $W:\Z\mapsto\Z$ is a fixed  polynomial of degree $q\in\N$. Finally, let $M_{S}f(x)=\sup_{N\in\N}|M_{S, N}f(x)|$ be the maximal function corresponding with the averages defined in \eqref{maxs}. Such maximal functions, as we will see in the sequel, play essential roles in the pointwise convergence problems.

It is worth emphasizing that, in fact,
Bourgain \cite{B3} and Wierdl \cite{Wie} (with $W(x)=x$)  and Nair \cite{Na1, Na2} (with general polynomials $W$) showed that the maximal function $M_{\mathbf{P}}$ is bounded on $\ell^r(\Z)$ for every $r>1$.

We shall show the following maximal theorem along the set of Piatetski--Shapiro primes.
\begin{thm}
Let $W:\Z\mapsto\Z$ be a fixed  polynomial of degree $q\in\N$. Then there exists $0<\g_q<1$ such that for every $\g_q<\g<1$ and every $1<r\le \8$ there is a constant $C>0$ such that
\begin{align*}
  \|M_{\mathbf{P}_{\g}}f\|_{\ell^r(\Z)}\le C\|f\|_{\ell^r(\Z)},
\end{align*}
for every $f\in\ell^r(\Z)$.
\end{thm}
Throughout the paper we will be considering thin subsets of primes. The set $S$ consisting of prime numbers is called thin if $|S|=\8$ and
$$|S\cap[1, x]|=\mathrm{o}(\mathbf{P}\cap[1, x]) \ \ \mbox{as \ $x\to\8$}.$$
The famous theorem of Piatetski--Shapiro \cite{PS} establishes the asymptotic formula
 $$|\mathbf{P}_{\g}\cap[1, x]|\sim\frac{x^{\g}}{\log x}  \ \ \mbox{as \ $x\to\8$},$$ for every $\g\in(11/12, 1)$, which obviously implies that $\mathbf{P}_{\g}$ is a thin subset of primes. The asymptotic formula was discovered by Piatetski--Shapiro \cite{PS} in 1953 for $\gamma\in(11/12, 1)$. This range was subsequently improved  by Kolesnik \cite{Kol}, Graham (unpublished), Leitmann (unpublished), Heath--Brown \cite{HB}, Kolesnik \cite{Kol1}, Liu--Rivat \cite{LR}.  Currently, due to recent result of Rivat and Sargos \cite{RS}, we know that $\gamma\in(2426/2817, 1)$, and this is the best known range.

In fact the set of Piatetski--Shapiro will be a particular example of a wide family of thin subsets of $\mathbf{P}$ of the form
\begin{align}\label{defnph}
 \mathbf{P}_{h}=\{p\in\mathbf{P}:
\exists_{n\in\N}\ p=\lfloor h(n)\rfloor\},
\end{align}
 where $h$ is an appropriate function, as in Definition \ref{defn}. Specifically, we will study the sets $\mathbf{P}_{h}$ with functions $h$ of the following form
\begin{align*}
  h_1(x)=x^c\log^Ax,\ \ h_2(x)=x^ce^{A\log^Bx},\ \ h_3(x)=x\log^Cx, \ \ h_4(x)=xe^{C\log^Bx}, \ \ h_5(x)=xl_m(x),
\end{align*}
where $c\in(1, 2)$, $A\in\R$, $B\in(0, 1)$, $C>0$, $l_1(x)=\log x$ and $l_{m+1}(x)=\log(l_m(x))$, for $m\in\N$.

However, we encourage the reader to bear in mind the set of Piatetski--Shapiro primes $\mathbf{P}_{\g}$
as a principal example. This will allow us to get a better understanding of further generalizations.

\begin{defn}\label{defn}
Let $c\in[1, 2)$ and $\mathcal{F}_c$ be the family of all functions $h:[x_0, \8)\mapsto [1, \8)$ (for some $x_0\ge1$)  satisfying
\begin{enumerate}
\item[(i)] $h\in \mathcal{C}^{\8}([x_0, \8))$ and
$$h'(x)>0,\ \ \ \ h''(x)>0, \ \ \mbox{for every \  $x\ge x_0$.}$$
\item[(ii)] There exists a real valued function $\vartheta\in\mathcal{C}^{\8}([x_0, \8))$ and a constant $C_h>0$ such that
\begin{align}\label{eq1}
  h(x)=C_hx^c\ell_h(x), \ \ \mbox{where}\ \ \ell_h(x)=e^{\int_{x_0}^x\frac{\vt(t)}{t}dt}, \ \ \mbox{for every \  $x\ge x_0$,}
\end{align}
and if $c>1$, then for every $n\in\N$
\begin{align}\label{eq2}
\lim_{x\to\8}\vartheta(x)=0, \ \ \mbox{and} \ \ \lim_{x\to\8}x^n\vartheta^{(n)}(x)=0.
\end{align}
  \item[(iii)] If $c=1$, then $\vt(x)$ is positive, decreasing and  for every $\varepsilon>0$ there is a constant $C_{\e}>0$
  \begin{align}\label{eq3}
    \frac{1}{\vt(x)}\le C_{\varepsilon}x^{\varepsilon}, \ \ \mbox{and} \ \ \lim_{x\to\8}\frac{x}{h(x)}=0.
  \end{align}
  Furthermore, for every $n\in\N$
  \begin{align}\label{eq4}
  \lim_{x\to\8}\vartheta(x)=0, \ \ \mbox{and} \ \ \lim_{x\to\8}\frac{x^n\vartheta^{(n)}(x)}{\vt(x)}=0.
\end{align}
\end{enumerate}
\end{defn}
Let $\vp:[h(x_0), \8)\mapsto[1, \8)$ be the inverse function to $h$ and $\pi_h(x)$ denotes the cardinality of the set $\mathbf{P}_{h, x}=\mathbf{P}_{h}\cap[1, x]$. The family $\mathcal{F}_c$ was introduced by Leitmann in \cite{Leit}, where he showed
\begin{align}\label{asympto}
 \pi_h(x)\sim\frac{\vp(x)}{\log x}  \ \ \mbox{as \ $x\to\8$},
\end{align}
for every  $h\in\mathcal{F}_c$ with $c\in[1, 12/11)$.

 The family $\mathcal{F}_c$ was also considered by the author in \cite{M1}, where the counterpart of Roth's theorem for the sets $\mathbf{P}_{h}$ has been proved. See also in \cite{M2}.

Let $c_q=(2^{2q+2}+2^q-2)/(2^{2q+2}+2^q-3)$ for $q\in\N$.  Our main result is the following.
\begin{thm}\label{maxithm}
Let $W:\Z\mapsto\Z$ be a fixed polynomial of degree $q\in\N$. Assume that $h\in\mathcal{F}_c$ with $c\in[1, c_q)$. Then for every $1<r\le \8$ there is a constant $C>0$ such that
\begin{align}\label{maxiest}
  \|M_{\mathbf{P}_h}f\|_{\ell^r(\Z)}\le C\|f\|_{\ell^r(\Z)},
\end{align}
for every $f\in\ell^r(\Z)$.
\end{thm}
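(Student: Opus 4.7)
The plan is to transfer the problem from the thin set $\mathbf{P}_h$ to the full set of primes by detecting membership via the inverse function $\varphi$ of $h$. Since $h$ is strictly increasing, for all but finitely many $p$ one has the identity
\begin{align*}
\mathbf{1}_{\mathbf{P}_h}(p)=\mathbf{1}_{\mathbf{P}}(p)\bigl(\lceil\varphi(p+1)\rceil-\lceil\varphi(p)\rceil\bigr),
\end{align*}
which via the sawtooth $\psi(t)=t-\lfloor t\rfloor-\tfrac12$ decomposes as the smooth piece $\varphi(p+1)-\varphi(p)\approx\varphi'(p)$ plus the oscillatory error $\psi(-\varphi(p+1))-\psi(-\varphi(p))$. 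Plugging this into \eqref{maxs} and using the Leitmann asymptotic \eqref{asympto} as the correct normalization, $M_{\mathbf{P}_h,N}$ splits into a \emph{main term} that is essentially the classical polynomial prime average of Bourgain, Wierdl and Nair, reweighted by the slowly varying factor $\varphi'(p)\log p$, plus an \emph{oscillatory remainder} containing all the information about $\psi$.

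The main term is handled by summation by parts against $\varphi'\log$, whose derivatives are controlled by \eqref{eq2} and \eqref{eq4}; this reduces matters to the classical polynomial prime maximal function, which is already known to be $\ell^r$-bounded for every $r>1$ by \cite{B3,Wie,Na1,Na2}. The substantive content lies in the remainder. After a Vaaler-type smooth truncation I would expand $\psi$ as an $L^2$-convergent trigonometric series, reducing the problem to controlling, with operator norms summable in $k$, the maximal functions of
\begin{align*}
T_N^kf(x)=\frac{1}{\pi_h(N)}\sum_{p\in\mathbf{P}\cap[1,N]}e^{2\pi ik\varphi(p)}f(x-W(p)),\qquad k\in\Z\setminus\{0\}.
\end{align*}

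For each such $k$ I would run the Hardy--Littlewood circle method in the style of Bourgain and of \cite{MSW,MT,M2}, splitting the Fourier multiplier of $T_N^k$ into major and minor arcs. On major arcs the multiplier factors as a Gauss sum times a smoothly varying kernel, so after a Rademacher--Menshov / square-function reassembly the maximal function is dominated by a discretized Hardy--Littlewood operator with an acceptable loss in $k$. On minor arcs one dispatches $\sum_{p\le N}\Lambda(p)e^{2\pi i(k\varphi(p)+\xi W(p))}$ via Vinogradov Type I/II decompositions, with van der Corput iterations exploiting the curvature of $\varphi$ provided by \eqref{eq1}. Combining the two pieces with the standard dyadic lacunary / full-range numerical inequality yields the required $\ell^2$ maximal bound for $T_N^k$.

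The step I expect to be the main obstacle is precisely this minor-arc exponential sum estimate, uniform in $k$ with enough quantitative saving to beat $\sum_{k\neq 0}|k|^{-1}$ after reassembling the Fourier series of $\psi$: the optimization between the number of affordable van der Corput iterations (which shrinks as $c\to 2$) and the degree $q$ of $W$ (which enlarges the ``noise'' added by the polynomial phase $\xi W(p)$) is exactly what pins down the numerical threshold $c_q=(2^{2q+2}+2^q-2)/(2^{2q+2}+2^q-3)$ appearing in the hypothesis. Once this estimate is in hand, interpolating the resulting $\ell^2$ maximal bound against the trivial $\ell^\infty$ estimate produces \eqref{maxiest} in the full range $1<r\le\infty$.
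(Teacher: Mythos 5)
Your reduction is the same as the paper's up to the error term: detecting $\mathbf{P}_h$ through the sawtooth applied to $\varphi$, peeling off the smooth piece $\varphi'(p)$, and handling the resulting reweighted prime average by summation by parts (Lemmas \ref{estmax} and \ref{lempart1}) so that Bourgain--Wierdl--Nair gives all $r>1$ for the main term; likewise the crux you identify (a power-saving bound for $\sum_{n\le N}\Lambda(n)e^{2\pi i(\xi W(n)+k\varphi(n))}$ via Vaughan Type I/II sums and van der Corput, with the threshold $c_q$ coming out of that optimization) is exactly Lemma \ref{finboundlem}. But your treatment of the oscillatory remainder has two genuine problems. First, the circle-method architecture you propose for the twisted operators $T_N^k$ is both structurally wrong and unnecessary. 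On ``major arcs'' the multiplier $\frac{1}{\pi_h(N)}\sum_{p\le N}e^{2\pi ik\varphi(p)}e^{2\pi i\xi W(p)}$ does \emph{not} factor as a Gauss sum times a smooth kernel: the phase $k\varphi(p)$ is not periodic, and there is no main term to extract -- the whole twisted sum exhibits cancellation. The paper says explicitly that the circle method is inefficient here; instead it proves a single estimate \emph{uniform in} $\xi\in[0,1]$ (Lemma \ref{formlem}), obtained by differentiating the phase $q+1$ times so that the polynomial $\xi W$ is annihilated and only the curvature of $m\varphi$ survives (Lemmas \ref{vdclem2} and \ref{billem}). Your worry that the polynomial phase adds ``noise'' to the van der Corput iterations is therefore misplaced: $q$ enters only through the order of differentiation, and once the bound is uniform in $\xi$ the major/minor dichotomy, the Gauss-sum analysis, and the Rademacher--Menshov reassembly all evaporate. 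Consequently no maximal estimate for the error is needed at all: one bounds $\sup_{N\in\mathcal{D}}|(K_{h,N}^1-K_{h,N}^2)*f|$ by the $\ell^r$-sum over dyadic scales, where Plancherel plus Lemma \ref{formlem} give $\|(K_{h,N}^1-K_{h,N}^2)*f\|_{\ell^2}\lesssim N^{-\chi}\|f\|_{\ell^2}$, and the geometric decay does the rest.

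Second, your closing step fails in the range $1<r<2$: interpolating an $\ell^2$ maximal bound against the trivial $\ell^\infty$ bound only yields $2\le r\le\infty$, whereas \eqref{maxiest} is claimed for all $r>1$ and the hard half is precisely $r$ near $1$. The correct assembly (and the paper's) is to interpolate at the level of the \emph{single-scale} operators: the kernels $K_{h,N}^1-K_{h,N}^2$ are uniformly bounded in $\ell^1(\Z)$, so $\|(K_{h,N}^1-K_{h,N}^2)*f\|_{\ell^r}\lesssim\|f\|_{\ell^r}$ for every $r\ge1$ by Young's inequality, and Riesz--Thorin between this and the $\ell^2$ decay gives $N^{-\chi_r}$ decay for every $1<r\le2$ (and symmetrically for $r>2$); summing over dyadic $N$ then dominates the supremum. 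If you insist on your route through maximal bounds for each $T_N^k$, you would still need an analogous single-scale interpolation (or a weak-type substitute) to get below $r=2$, plus summability in $k$ of the resulting constants against the Fourier coefficients of the truncated sawtooth -- none of which follows from the $\ell^2$/$\ell^\infty$ interpolation you invoke.
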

The proof of Theorem \ref{maxithm}, see Section \ref{sectmax}, will  be based to a certain extent on the ideas of Bourgain pioneered in
\cite{B1}, \cite{B2} and \cite{B3}, see also \cite{Na1, Na2, Wie}. However, the circle method of Hardy and Littlewood is inefficient here, (it was one of the main tools in Bourgain's works).
Instead of that the main basic idea in the proof is to make use of the following inequality
$$\|M_{\mathbf{P}_h}f\|_{\ell^r(\Z)}\le C\big(\|M_{\mathbf{P}}f\|_{\ell^r(\Z)}+\|\sup_{N\in\N}|(M_{\mathbf{P}_h, 2^N}-M_{\mathbf{P}, 2^N})f\|_{\ell^r(\Z)}\big).$$
In view of Bourgain--Wierdl's \cite{B3, Wie} and Nair's \cite{Na1, Na2} theorems the only point remaining is to bound the maximal function associated with the error term.
  Heuristically speaking, our aim will be to show, working on the Fourier transform side,  that
 $\|M_{\mathbf{P}_h, 2^N}-M_{\mathbf{P}, 2^N}\|_{\ell^2(\Z)\mapsto\ell^2(\Z)}=O(2^{-\d N})$ for some $\d>0$, see Lemma \ref{formlem} in Section \ref{sectformlem}. In order to establish this decay we will use some special Van der Corput's inequality rather than Weyl's inequality to bound the exponential sums corresponding with the Fourier transform of the operators $M_{\mathbf{P}_h, 2^N}-M_{\mathbf{P}, 2^N}$, see Section \ref{sectexp} and Section \ref{sectformlem}.
 As far as we know there are no other results dealing with maximal functions along thin subsets of primes.


 Theorem \ref{maxithm} combined with some oscillation inequality (see \eqref{ergosc} in Section \ref{secterg}) will lead us, via the transference principle, to the following generalization of Theorem \ref{ergo1}.
\begin{thm}\label{ergthm}
Let $(X, \mathcal{B}, \mu, T)$ be a dynamical system. Let $W:\Z\mapsto\Z$ be a fixed polynomial of degree $q\in\N$. Assume that $h\in\mathcal{F}_c$, with $c\in[1, c_q)$. Then, for every $f\in L^r(X, \mu)$ where $r>1$, the ergodic averages
\begin{align}\label{ergh}
  A_{h, N}f(x)=\frac{1}{\pi_h(N)}\sum_{p\in\mathbf{P}_{h, N}}f(T^{W(p)}x) \ \ \mbox{for  \  $x\in X$},
\end{align}
converge $\mu$--almost everywhere on $X$.
\end{thm}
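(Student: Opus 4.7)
The plan is the standard passage from an $\ell^r(\Z)$ maximal bound to a pointwise ergodic theorem, combining the Calder\'on transference principle with an oscillation inequality. First I would use Calder\'on transference to upgrade Theorem \ref{maxithm} to a bound $\|\sup_N |A_{h,N}f|\|_{L^r(X,\mu)} \leq C\|f\|_{L^r(X,\mu)}$. The verification is routine: for the truncated orbit function $F_x(n) = f(T^n x)\mathbf{1}_{[-M,M]}(n)$ on $\Z$ one has $M_{\mathbf{P}_h, N} F_x(0) = A_{h,N} f(x)$ whenever $N \leq M/2$, and integrating the $\ell^r$ bound over $x \in X$, using the $T$-invariance of $\mu$, and then letting $M \to \infty$ gives the desired inequality.

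With the maximal bound in place, the set of $f \in L^r(X,\mu)$ for which $A_{h,N}f$ converges $\mu$-a.e.\ is closed in $L^r(X,\mu)$. Since $L^r(X,\mu) \cap L^2(X,\mu)$ is dense in $L^r(X,\mu)$ (using $\sigma$-finiteness of $\mu$ and truncation), it suffices to prove a.e.\ convergence for $f \in L^2(X,\mu)$. Here I would invoke the oscillation inequality \eqref{ergosc}, which I expect to take the form
\begin{equation*}
\Big\|\Big(\sum_{j=1}^{J}\sup_{N_j \leq N < N_{j+1}}\bigl|M_{\mathbf{P}_h, N}f - M_{\mathbf{P}_h, N_j}f\bigr|^2\Big)^{1/2}\Big\|_{\ell^2(\Z)} \leq C\|f\|_{\ell^2(\Z)}
\end{equation*}
uniformly in $J \in \N$ and in increasing sequences $N_1 < N_2 < \cdots$. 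Transferring this bound to $L^2(X,\mu)$ as above and letting $J \to \infty$ forces the sequence $\{A_{h,N}f(x)\}_{N}$ to be Cauchy for $\mu$-a.e.\ $x$, giving a.e.\ convergence on $L^2$. A standard $\varepsilon/3$ approximation -- writing $f = g + (f-g)$ with $g \in L^r(X,\mu) \cap L^2(X,\mu)$ close to $f$ in $L^r$, and using the maximal inequality to control the error term -- then extends a.e.\ convergence to all of $L^r(X,\mu)$.

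The main obstacle is the oscillation inequality \eqref{ergosc}, which does not follow from the maximal inequality alone and requires a quantitative refinement of the arguments behind Theorem \ref{maxithm}. The mechanism flagged around Lemma \ref{formlem} is precisely what is needed: the operator-norm decay $\|M_{\mathbf{P}_h, 2^N} - M_{\mathbf{P}, 2^N}\|_{\ell^2 \to \ell^2} = O(2^{-\delta N})$ lets one compare the Piatetski--Shapiro oscillation sum against the corresponding oscillation sum for $M_{\mathbf{P}, 2^N}$, the latter being already available from the work of Bourgain--Wierdl--Nair; the geometric decay absorbs the dyadic summation, while non-dyadic scales are handled by standard long/short variation decompositions. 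The exponential-sum estimates furnished by Van der Corput's inequality in the paper's earlier sections are what make this $O(2^{-\delta N})$ decay possible, and they feed directly into the oscillation bound in the same way they feed into the maximal bound.
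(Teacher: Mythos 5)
Your overall architecture --- transference of the maximal bound from Theorem \ref{maxithm}, reduction by density to $L^2(X,\mu)$, and an oscillation inequality to get almost everywhere convergence there --- is the same as the paper's. The genuine gap is in the form in which you state the oscillation inequality and in the claim that your mechanism proves it. You posit a bound with an absolute constant, uniform in $J$ and over \emph{arbitrary} increasing sequences $(N_j)$, with the supremum over \emph{all} intermediate scales $N\in[N_j,N_{j+1})$. Neither ingredient you invoke delivers this. For the main term, what Bourgain--Wierdl--Nair actually provide is the much weaker $o(J)$ oscillation bound along rapidly increasing $(N_j)$ with the intermediate scales restricted to a lacunary set $Z_{\e}=\{\lfloor(1+\e)^n\rfloor: n\in\N\}$; uniform oscillation or variational estimates for prime averages over arbitrary sequences are not in those references and are not needed here. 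For the error term, Lemma \ref{formlem} gives $\|(K_{h,N}^1-K_{h,N}^2)\ast f\|_{\ell^2}\lesssim N^{-\chi}\|f\|_{\ell^2}$ with a \emph{small} $\chi>0$; if you dominate the supremum over all $N$ in a block by the square function over all integers $N$, the series $\sum_N N^{-2\chi}$ diverges, and the fluctuation of $A_{h,N}f$ within a dyadic block is not small for positive $f$ (the normalization $\pi_h(N)$ changes by a factor roughly $2^{\g}$), so the appeal to ``standard long/short variation decompositions'' does not rescue the unrestricted supremum without substantially more work.

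The paper avoids all of this by proving \eqref{ergosc} exactly in the weak form: the supremum is taken only over $N\in Z_{\e}$, the bound is $o(J)$, the main term is handled by Nair's oscillation result for the prime averages, and the error term sums geometrically over the lacunary scales, $\sum_{j\le J}N_j^{-\chi}=O(1)$, by Parseval and Lemma \ref{formlem}; convergence along $Z_{\e}$ for every $\e>0$ is then enough because the averages are positive and the normalizations at consecutive scales of $Z_{\e}$ are comparable up to $(1+\e)^{\g}+o(1)$, so letting $\e\to0$ recovers all $N$. You also skip a step that is needed to make the Fourier comparison work at all: Lemma \ref{formlem} compares \emph{logarithmically weighted} sums, so the paper first replaces $A_{h,N}$ by $A_{h,N}^1f(x)=\frac1N\sum_{p\in\mathbf{P}_{h,N}}\vp'(p)^{-1}\log p\ f(T^{W(p)}x)$ (via Lemma \ref{estmax} and Lemma \ref{lempart1}) and, after proving \eqref{ergmax} and \eqref{ergosc} for $A_{h,N}^1$, passes back from convergence of $A_{h,N}^1f$ to convergence of $A_{h,N}f$ by a summation-by-parts argument occupying the first part of Section \ref{secterg}; a direct kernel comparison for the unweighted averages is not what Lemma \ref{formlem} furnishes.
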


On the other hand there is a natural question about the endpoint estimates for ${M}_{\mathbf{P}_h}f$, i.e. when $r=1$. Recently, Buczolich and Mauldin \cite{BM} and LaVictoire \cite{LaV1} showed, as it was mentioned above, that the pointwise convergence of ergodic averages along $p(n)=n^k$ for $k\ge2$ or the set of primes fails on $L^1$.
In view of the recent achievements in this field, it would be nice to know what  the answer is to this question in our case.

Finally, we will show that the ternary Goldbach problem has a solution
 in primes belonging to $\mathbf{P}_{h}$. The ternary Goldbach conjecture, or three--primes problem, asserts that every odd integer $N$ greater than $5$ is the sum of three primes, see for instance \cite{Nat}. This conjecture has been recently verified by Helfgott in the series of papers \cite{H1, H2, H3}.
 \begin{thm}\label{asymptthm}
Let $0<\g_1, \g_2, \g_3\le 1$ be fixed real numbers such that
 \begin{align}\label{asymptthmass}
 \begin{split}
   16(1-\g_1)&+14(1-\g_2)+14(1-\g_3)<1,\\
   16(1-\g_2)&+14(1-\g_1)+14(1-\g_3)<1,\\
   16(1-\g_3)&+14(1-\g_1)+14(1-\g_2)<1.
   \end{split}
 \end{align}
 Assume that $h_1\in\mathcal{F}_{1/\g_1}$, $h_2\in\mathcal{F}_{1/\g_2}$, $h_3\in\mathcal{F}_{1/\g_3}$ and $\vp_1, \vp_2, \vp_3$ be their inverse respectively.
 Then there exists a  constant $C(\g_1, \g_2, \g_3)>0$ such that $R(N)$ the number of representations  of an odd $N\in\N$ as a sum of three primes $p_i\in\mathbf{P}_{h_i}$
where $i=1, 2, 3$ satisfies
\begin{align}\label{asymptineq}
    R(N)=\sum_{\genfrac{}{}{0pt}{}{p_1+p_2+p_3=N}{p_i\in\mathbf{P}_{h_i, N}}}1
  \ge C(\g_1, \g_2, \g_3)\cdot\frac{\mathfrak{S}(N)\vp_1(N)\vp_2(N)\vp_3(N)}{N\ \log^3N},
  \end{align}
  for every sufficiently large $N\in\N$, where $\mathfrak{S}(N)=\prod_{p\in\mathbf{P}}\big(1-\frac{1}{(p-1)^3}\big)
    \prod_{p|N}\big(1-\frac{1}{p^2-3p+3}\big)>0$ is the singular series as in the classical ternary Goldbach problem.
 \end{thm}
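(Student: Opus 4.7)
The plan is to run the Hardy--Littlewood circle method adapted to the thin prime sets $\mathbf{P}_{h_j}$. Setting
$$S_j(\alpha)=\sum_{p\in\mathbf{P}_{h_j, N}}e^{2\pi i\alpha p},\qquad j=1,2,3,$$
orthogonality gives $R(N)=\int_0^1 S_1(\alpha)S_2(\alpha)S_3(\alpha)e^{-2\pi i\alpha N}\,d\alpha$. I would split $[0,1]$ in the usual way into major arcs $\mathfrak{M}$, the union over coprime pairs $1\le a\le q\le Q$ of the intervals $\{\alpha:|\alpha-a/q|\le Q/N\}$ with $Q=(\log N)^B$ for a suitable $B>0$, and minor arcs $\mathfrak{m}=[0,1]\setminus\mathfrak{M}$.

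On the major arcs I would replace each $S_j$ by its expected main term. Using the standard identity writing $\mathbf{1}_{\mathbf{P}_{h_j}}(n)$ as a difference of floor functions of $\vp_j$ at $n$ and $n+1$, opening the floor by Vaaler's Fourier approximation of the fractional part, and applying Siegel--Walfisz to the resulting sums over primes in residue classes modulo $q$, one obtains an asymptotic of the form
$$S_j(a/q+\beta)=\frac{\mu(q)}{\phi(q)}\int_2^N\frac{\vp_j'(t)}{\log t}\,e^{2\pi i\beta t}\,dt+O\big(\vp_j(N)(\log N)^{-A}\big).$$
Substituting this into the triple integral, expanding the product, and integrating over $|\beta|\le Q/N$ then produces the main term $C(\gamma_1,\gamma_2,\gamma_3)\mathfrak{S}(N)\vp_1(N)\vp_2(N)\vp_3(N)N^{-1}\log^{-3}N$. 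The singular series coincides with the classical one because the thinning by $h_j$ is carried out in a purely Archimedean variable, independent of the congruences modulo $q$, and its positivity for odd $N$ is standard.

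The heart of the argument, and the step I expect to be the hardest, is the minor arc bound. For each $j$ I would establish $\sup_{\alpha\in\mathfrak{m}}|S_j(\alpha)|\ll \vp_j(N)N^{-\rho_j}$ with a power saving $\rho_j>0$ depending on $1-\gamma_j$, via the following mechanism: rewrite $\mathbf{1}_{\mathbf{P}_{h_j}}$ through the floor difference of $\vp_j$, expand the floor in Fourier series, and estimate the resulting exponential sums of type $\sum_{p} e^{2\pi i(\alpha p+\beta\vp_j(p))}$ using the specialised Van der Corput inequality from Section \ref{sectexp} (the same tool underlying Lemma \ref{formlem}). A H\"older-and-Parseval bound then gives, for any permutation $(j,k,l)$ of $(1,2,3)$,
$$\int_{\mathfrak{m}}|S_1 S_2 S_3|\,d\alpha\le\Big(\sup_{\mathfrak{m}}|S_j|\Big)\|S_k\|_2\|S_l\|_2\ll \vp_j(N)N^{-\rho_j}\big(\pi_{h_k}(N)\pi_{h_l}(N)\big)^{1/2}.$$
Choosing in each regime the permutation $(j,k,l)$ that produces the strongest saving and quantifying the numerical losses in the Van der Corput step converts the requirement ``minor arc contribution is $\mathrm{o}$(main term)'' into exactly the three linear inequalities \eqref{asymptthmass}; the asymmetry between the coefficients $16$ and $14$ tracks the different Weyl exponents available for the factor contributing the pointwise supremum versus the two factors bounded by Parseval. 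Combined with the major arc main term this yields \eqref{asymptineq} for all sufficiently large odd $N$.
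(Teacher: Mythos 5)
Your overall strategy (circle method for the thin sets themselves) is not the route the paper takes, and as sketched it has two genuine gaps. The critical one is the minor arc estimate: with major arcs of radius $Q/N$ around rationals of denominator $q\le Q=(\log N)^B$, a power--saving bound $\sup_{\alpha\in\mathfrak{m}}|S_j(\alpha)|\ll \vp_j(N)N^{-\rho_j}$ is simply false. Already for the full set of primes ($\g_j=1$), a point $\alpha$ just outside a major arc centred at $a/q$ with $q\asymp(\log N)^B$ gives $|S_j(\alpha)|\gg N(\log N)^{-C}$, i.e.\ only a logarithmic saving; the same obstruction persists for $\mathbf{P}_{h_j}$, since these sets are expected to be equidistributed modulo small $q$. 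So either you must enlarge $Q$ to a power of $N$ (and then Siegel--Walfisz is unavailable and your major arc treatment collapses), or you must settle for logarithmic savings on $\mathfrak{m}$, in which case the bookkeeping you describe cannot produce the power--type conditions \eqref{asymptthmass}, and you would in effect have to redo the Balog--Friedlander/Kumchev analysis, which is far more involved than your sketch. The second gap is on the major arcs: after writing $\mathbf{1}_{\mathbf{P}_{h_j}}$ via the floor-function difference and expanding by Vaaler, the resulting sums are not ``primes in residue classes'' but twisted sums of the shape $\sum_n\Lambda(n)e^{2\pi i m\vp_j(n)}$ restricted to progressions, so Siegel--Walfisz does not apply directly; you would need exponential-sum estimates of the type in Section \ref{sectexp} but with uniformity in the modulus $q$ and the residue class, which is a substantive additional theorem you neither state nor prove. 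Finally, your claim that the constants $16$ and $14$ would ``come out'' of this analysis is asserted, not derived.

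For comparison, the paper avoids arcs altogether: Lemma \ref{formlem} (with $W(x)=x$, $q=1$) gives, \emph{uniformly in} $\xi\in[0,1]$, that the weighted sum $\sum_{p\in\mathbf{P}_{h_i,N}}\vp_i'(p)^{-1}\log p\, e^{2\pi i\xi p}$ equals $\sum_{p\in\mathbf{P}_N}\log p\,e^{2\pi i\xi p}$ up to $O(N^{1-\chi_i-\chi'})$; partial summation converts this into $G^i_N(\xi)=F^i_N(\xi)+O(\vp_i(N)N^{-\chi_i-\e_i')}$ with $F^i_N(\xi)=\sum_{p\in\mathbf{P}_N}\vp_i'(p)e^{2\pi i\xi p}$. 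One then writes $R(N)$ as the integral of $G^1G^2G^3$, replaces each $G^i$ by $F^i$ using one sup-norm factor and two Parseval factors $\big(\vp_k(N)/\log N\big)^{1/2}$, and reduces the main term to the classical Vinogradov three-primes asymptotic via $\vp_i'(p_i)\ge\vp_i'(N)$. The hypotheses \eqref{asymptthmass} are exactly what is needed to run Lemma \ref{formlem} with the choices $\chi_1=\tfrac12(1-\g_2)+\tfrac12(1-\g_3)$ (and cyclically), since the $q=1$ constraint reads $16(1-\g_i)+28\chi_i<1$, which is where $16$ and $14=28/2$ actually originate. If you want to salvage your write-up, the efficient fix is to abandon the arc decomposition and prove (or invoke) a uniform-in-$\xi$ comparison of this kind, letting the classical ternary Goldbach theorem do the arithmetic work.
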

 In particular  \eqref{asymptineq} means that every sufficiently large odd integer can be written as a sum of three primes $p_i\in\mathbf{P}_{h_i}$ where $i=1, 2, 3$. Balog and Friedlander in \cite{BF} and Kumchev in \cite{Kum} solved the ternary Goldbach problem in the Piatetski--Shapiro primes.

Here we extend the results of  Balog and Friedlander, and Kumchev to more general sets $\mathbf{P}_{h}$ for $h\in\mathcal{F}_c$ at the expanse of more restrictive conditions on $\g_1, \g_2, \g_3$ in \eqref{asymptthmass}. The proof of Theorem \ref{asymptthm} will be a combination of methods developed by Heath--Brown \cite{HB} with the  Vinogradov's methods \cite{Nat}.

The paper is organized as follows. In Section \ref{sectf} we give the necessary properties of function $h\in\mathcal{F}_c$ and its inverse $\vp$. In Section \ref{sectexp} we estimate some exponential sums which allow us to give the proof of Lemma \ref{formlem} in Section \ref{sectformlem}. In the last three sections  we give the proofs of Theorem \ref{maxithm}, Theorem \ref{ergthm} and Theorem \ref{asymptthm} respectively.

Throughout the whole paper,  $C > 0$ will  stand for a large positive constant whose value may vary from occurrence to occurrence.
For two quantities $A>0$ and $B>0$ we say that $A\lesssim B$ ($A\gtrsim B$) if there exists an absolute constant $C>0$ such that $A\le CB$ ($A\ge CB$). We will shortly write that $A\simeq B$, if $A\lesssim B$ and $A\gtrsim B$ hold simultaneously. We will also write $A\lesssim_{\d} B$ ($A\gtrsim_{\d} B$) to indicate that the constant $C>0$ depends on some $\d>0$.

\section*{Acknowledgements}
The author is greatly indebted to Christoph Thiele and Jim Wright for many helpful suggestions improving exposition of this paper.

\section{Basic properties of functions $h$ and $\vp$}\label{sectf}
Let us begin this section from reviewing  the properties of function $h\in\mathcal{F}_c$ and its inverse $\vp$ which have been established in \cite{M1}. It is easy to see that there exists a function $\te:[h(x_0),\8)\mapsto\R$ such that $\lim_{x\to\8}\te(x)=0$ and $x\vp'(x)=\vp(x)(\g+\te(x))$. Moreover,
  \begin{align}\label{funfi}
  \vp(x)=x^{\g}\ell_{\vp}(x),\ \ \ \mbox{where}\ \ \ \ell_{\vp}(x)=e^{\int_{h(x_0)}^x\frac{\te(t)}{t}dt+D},
\end{align}
  for every $x\ge h(x_0)$, where $D=\log x_0/h(x_0)^{\g}$ and
  \begin{align}\label{tetadef}
  \te(x)=\frac{1}{(c+\vartheta(\vp(x)))}-\g
  =-\frac{\vartheta(\vp(x))}{c(c+\vartheta(\vp(x)))}.
\end{align}
  If $L(x)=\ell_h(x)$ or $L(x)=\ell_{\vp}(x)$, then for every $\e>0$ we have
  \begin{align}\label{slowhfi}
    \lim_{x\to\8}x^{-\e}L(x)=0,\ \ \ \mbox{and}\ \ \ \lim_{x\to\8}x^{\e}L(x)=\8,
  \end{align}
  and consequently, for every $\e>0$
  \begin{align}\label{ratefi}
    x^{\g-\e}\lesssim_{\e}\vp(x),\ \ \ \mbox{and}\ \ \ \lim_{x\to\8}\frac{\vp(x)}{x}=0.
  \end{align}
  Furthermore, $x\mapsto x\vp(x)^{-\d}$ is increasing for every $\d< c$, (if $c=1$, even $\d\le1$ is allowed) and for every $x\ge h(x_0)$ we have
  \begin{align}\label{compfi}
    \vp(x)\simeq\vp(2x),\ \ \mbox{and}\ \ \vp'(x)\simeq\vp'(2x).
  \end{align}

\begin{lem}\label{lem1}
Assume that $c\in[1, 2)$ and $h\in\mathcal{F}_c$. If $c>1$, then for every $n\in\N$
\begin{align}\label{eq5}
  \lim_{x\to\8}\frac{x^nh^{(n)}(x)}{h(x)}=c(c-1)(c-2)\cdot\ldots\cdot(c-n+1).
\end{align}
If $c=1$, then $xh'(x)=h(x)(1+\vt(x))$ and for every $n\in\N$ with $n\ge2$
\begin{align}\label{eq6}
  \lim_{x\to\8}\frac{x^nh^{(n)}(x)}{\vt(x)h(x)}=(-1)^{n-2}(n-2)!.
\end{align}
\end{lem}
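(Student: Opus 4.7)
The natural starting point is the identity $xh'(x) = h(x)(c + \vartheta(x))$, obtained by logarithmic differentiation of the representation \eqref{eq1}. This already yields the $n = 1$ case of \eqref{eq6} (in the $c = 1$ branch), and, after sending $x \to \infty$ in view of $\vartheta(x) \to 0$, also the $n = 1$ case of \eqref{eq5}. The strategy is to iterate this identity to obtain a closed form expression for $x^n h^{(n)}(x)/h(x)$, and then let $x \to \infty$ using the decay hypotheses \eqref{eq2} and \eqref{eq4}.

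An easy induction produces the representation $h^{(n)}(x) = h(x) P_n(x) x^{-n}$, where $P_1(x) = c + \vartheta(x)$ and
\[
  P_{n+1}(x) = (c - n + \vartheta(x)) P_n(x) + x P_n'(x).
\]
Setting $u_k(x) = x^k \vartheta^{(k)}(x)$, so in particular $u_0 = \vartheta$, a direct computation gives $x u_k'(x) = k u_k(x) + u_{k+1}(x)$. Consequently $P_n$ can be viewed as a polynomial (with integer coefficients depending only on $c$) in the variables $u_0(x), \ldots, u_{n-1}(x)$. The hypotheses on $\vartheta$ translate as follows: in both cases $u_k(x) \to 0$ for every $k \geq 0$, while \eqref{eq4} additionally guarantees that $u_k(x)/\vartheta(x) \to 0$ for every $k \geq 1$.

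For $c > 1$ one would track the constant term $b_n$ of $P_n$. Since $xP_n'$ produces only monomials containing at least one $u_k$, the recursion reduces to $b_{n+1} = (c - n) b_n$ with $b_1 = c$, giving $b_n = c(c-1)\cdots(c-n+1)$. All non-constant monomials in $P_n$ tend to $0$ by \eqref{eq2}, so $P_n(x) \to b_n$, which is \eqref{eq5}. For $c = 1$ the same computation shows $P_n$ has zero constant term for every $n \geq 2$, so one instead tracks the coefficient $a_n$ of $u_0$ in $P_n$. Because $xu_k'$ never contributes $u_0$ to the sum, the recursion becomes $a_{n+1} = (1 - n) a_n$ with $a_2 = 1$, yielding $a_n = (-1)^{n-2}(n-2)!$. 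Dividing $P_n$ by $\vartheta(x)$ and using \eqref{eq4} to kill the remaining monomials then delivers \eqref{eq6}.

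The main obstacle is the bookkeeping in the $c = 1$ case, where one must verify that every monomial in $P_n$ other than $a_n u_0$ is negligible compared with $\vartheta$. This reduces to two observations: single monomials $u_k$ with $k \geq 1$ satisfy $u_k/\vartheta \to 0$ directly from \eqref{eq4}, and any monomial of total degree $\geq 2$ carries either a factor $u_k$ with $k \geq 1$ (giving $u_k/\vartheta \to 0$ times a bounded product) or consists entirely of factors $u_0$, in which case division by $\vartheta$ leaves at least one extra factor $\vartheta \to 0$. Once the polynomial description of $P_n$ in the $u_k$ is established, both parts of the lemma follow mechanically.
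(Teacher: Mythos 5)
Your argument is correct. The recursion you derive from the logarithmic-derivative identity $xh'(x)=h(x)(c+\vartheta(x))$, namely $P_{n+1}=(c-n+\vartheta)P_n+xP_n'$ for $P_n(x)=x^nh^{(n)}(x)/h(x)$, is valid, the substitution $u_k(x)=x^k\vartheta^{(k)}(x)$ with $xu_k'=ku_k+u_{k+1}$ closes the system, and the scalar recursions $b_{n+1}=(c-n)b_n$, $b_1=c$ (constant term) and $a_{n+1}=(1-n)a_n$, $a_2=1$ ($u_0$--coefficient, using $b_n=0$ for $n\ge2$ when $c=1$) give exactly the limits in \eqref{eq5} and \eqref{eq6}; the elimination of all remaining monomials via \eqref{eq2}, respectively \eqref{eq4}, is also handled correctly (for full rigor one should define the $P_n$ as formal polynomials in indeterminates $U_0,\dots,U_{n-1}$ through the recursion and only then evaluate at $U_k=u_k(x)$, so that "the constant term'' and "the coefficient of $u_0$'' are unambiguous, but this is routine). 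Your route differs from the paper's in its mechanics: the paper first factors $h(x)=C_hx^c\ell_h(x)$, applies the Leibniz rule, and then runs separate inductions on the factor $\ell_h$ — for $c>1$ showing $x^kD_k\ell_h(x)/\ell_h(x)\to0$ via polynomials in $(\vartheta,x\vartheta',\dots)$ vanishing at the origin, and for $c=1$ proving $\lim x^nD_n\ell_h(x)/(\vartheta(x)\ell_h(x))=(-1)^{n-1}(n-1)!$ by a direct inductive limit computation from the identity $D_{n-1}(x\ell_h')=D_{n-1}(\vartheta\ell_h)$. Your version bypasses the factorization and the Leibniz expansion entirely, treats $c>1$ and $c=1$ within one recursion, and makes the factorial constants appear from one-line scalar recursions, which is arguably cleaner; the paper's decomposition, on the other hand, isolates the slowly varying factor $\ell_h$ and is the template it reuses verbatim for the inverse function $\varphi$ in the subsequent lemmas, so it integrates more directly with the rest of Section 2. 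Either way the hypotheses \eqref{eq2} and \eqref{eq4} are used in the same essential manner, so both proofs have the same strength.
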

\begin{proof}
We may assume, without loss of generality, that the constant $C_h=1$. Firstly, we consider the case when $c>1$. Let $D_nf(x)=\frac{d^n}{dx}f(x)$ be the operator of the $n$--th derivative and $D_0f(x)=f(x)$. By the definition $h(x)=x^c\ell_h(x)$, then in view of  the Leibniz rule we have
\begin{align*}
  x^nD_nh(x)&=x^nD_n\big(x^c\ell_h(x)\big)=x^n\sum_{k=0}^n\binom{n}{k}D_k\big(x^c\big)D_{n-k}\ell_h(x)\\
  &=\sum_{k=0}^n\binom{n}{k}c_kh(x)\frac{x^{n-k}D_{n-k}\ell_h(x)}{\ell_h(x)},
\end{align*}
where $c_k=c(c-1)\cdot\ldots\cdot(c-k+1)$. The proof will be completed, if we show that
\begin{align}\label{eq7}
  \lim_{x\to\8}\frac{x^{k}D_{k}\ell_h(x)}{\ell_h(x)}=0, \ \ \ \mbox{for any $k\in\N$,}
\end{align}
then
\begin{align*}
  \lim_{x\to\8}\frac{x^nD_nh(x)}{h(x)}&=
  \lim_{x\to\8}\sum_{k=0}^n\binom{n}{k}c_k\frac{x^{n-k}D_{n-k}\ell_h(x)}{\ell_h(x)}=c_n,
\end{align*}
and we get \eqref{eq5}. In order to show \eqref{eq7} we will proceed by induction and prove that for every $k\in\N$ there exists a polynomial $P_k:\R^k\mapsto \R$ such that $P_k(0,\ldots,0)=0$ and
\begin{align*}
  x^{k}D_{k}\ell_h(x)=\ell_h(x)P_k\big(\vt(x), x\vt'(x), x^2\vt''(x),\ldots, x^{k-1}\vt^{(k-1)}(x)\big).
\end{align*}
Now we easily see that \eqref{eq7} follows by \eqref{eq2} and the continuity of polynomials, since $P_k(0,\ldots,0)=0$.
Our statement is true for $k=1$, since $x\ell'_h(x)=\ell_h(x)\vt(x)$. Now assume that it holds for all $1\le k\le n-1$. By induction we prove that it is true for $k=n$. For this purpose observe that $D_{n-1}(x\ell_h'(x))=D_{n-1}(\vt(x)\ell_h(x))$, thus
\begin{align*}
  xD_n\ell_h(x)+(n-1)D_{n-1}\ell_h(x)=\sum_{k=0}^{n-1}\binom{n-1}{k}D_k\vt(x)D_{n-1-k}\ell_h(x).
\end{align*}
Therefore, by the inductive hypothesis we obtain that
\begin{align*}
  x^nD_n\ell_h(x)&=\sum_{k=0}^{n-1}\binom{n-1}{k}x^kD_k\vt(x)\cdot x^{n-1-k}D_{n-1-k}\ell_h(x)-(n-1)x^{n-1}D_{n-1}\ell_h(x)\\
  &=\ell_h(x)\sum_{k=0}^{n-1}\binom{n-1}{k}x^kD_k\vt(x)\cdot P_{n-1-k}\big(\vt(x), x\vt'(x),\ldots, x^{n-1-k-1}\vt^{(n-1-k-1)}(x)\big)\\
  &-\ell_h(x)(n-1)P_{n-1}\big(\vt(x), x\vt'(x),\ldots, x^{n-2}\vt^{(n-2)}(x)\big).
\end{align*}
We have just shown that $x^{n}D_{n}\ell_h(x)=\ell_h(x)P_n\big(\vt(x), x\vt'(x),\ldots, x^{n-1}\vt^{(n-1)}(x)\big)$ for some polynomial $P_n:\R^n\mapsto\R$ such that $P(0,\ldots,0)=0$. This proves \eqref{eq7} and completes the proof for $c>1$.

Assume now that $c=1$ and we prove \eqref{eq6}. By the Leibniz formula we get
\begin{align*}
  D_nh(x)&=D_{n-1}\big(\ell_h(x)(1+\vt(x))\big)=D_{n-1}\ell_h(x)(1+\vt(x))
  +\ell_h(x)D_{n-1}\vt(x)\\
  &+(n-1)\ell_h'(x)D_{n-2}\vt(x)
  +\sum_{k=2}^{n-2}\binom{n-1}{k}D_k\ell_h(x)D_{n-1-k}\vt(x).
\end{align*}
We will show by induction with respect to $n\in\N$ that
\begin{align}\label{eq8}
  \lim_{x\to\8}\frac{x^nD_n\ell_h(x)}{\vt(x)\ell_h(x)}=(-1)^{n-1}(n-1)!,
\end{align}
and consequently \eqref{eq6} will follow, since by \eqref{eq8} and \eqref{eq4}, for $n\ge2$ we have
\begin{align*}
  \lim_{x\to\8}\frac{x^nD_nh(x)}{\vt(x)h(x)}&=\lim_{x\to\8}\frac{x^{n-1}D_{n-1}\ell_h(x)}{\vt(x)\ell_h(x)}(1+\vt(x))
  +\lim_{x\to\8}\frac{x^{n-1}D_{n-1}\vt(x)}{\vt(x)}\\
  &+(n-1)\lim_{x\to\8}\vt(x)\frac{x^{n-2}D_{n-2}\vt(x)}{\vt(x)}\\
  &+\lim_{x\to\8}\sum_{k=2}^{n-2}\binom{n-1}{k}\vt(x)
  \frac{x^kD_k\ell_h(x)}{\vt(x)\ell_h(x)}\frac{x^{n-1-k}D_{n-1-k}\vt(x)}{\vt(x)}\\
  &=\lim_{x\to\8}\frac{x^{n-1}D_{n-1}\ell_h(x)}{\vt(x)\ell_h(x)}=(-1)^{n-2}(n-2)!.
\end{align*}
For $n=1$ \eqref{eq8} is obviously satisfied, since $x\ell'_h(x)=\ell_h(x)\vt(x)$. Now assume that \eqref{eq8} is true for all $1\le k\le n-1$ we will show that it also remains true for $k=n$. Observe that the identity $D_{n-1}(x\ell_h'(x))=D_{n-1}(\vt(x)\ell_h(x))$, yields
\begin{align*}
  xD_n\ell_h(x)=(\vt(x)-(n-1))D_{n-1}\ell_h(x)+\sum_{k=1}^{n-1}\binom{n-1}{k}D_k\vt(x)D_{n-1-k}\ell_h(x),
\end{align*}
thus we obtain that
\begin{align*}
  \frac{x^nD_n\ell_h(x)}{\vt(x)\ell_h(x)}&=(\vt(x)-(n-1))\frac{x^{n-1}D_{n-1}\ell_h(x)}{\vt(x)\ell_h(x)}\\
  &+\sum_{k=1}^{n-1}\binom{n-1}{k}\frac{x^kD_k\vt(x)}{\vt(x)}\vt(x)\frac{x^{n-1-k}D_{n-1-k}\ell_h(x)}{\vt(x)\ell_h(x)},
\end{align*}
and consequently by the inductive hypothesis and \eqref{eq4} we get \eqref{eq8}. The proof of Lemma \ref{lem1} is completed.
\end{proof}
\begin{lem}\label{filem}
Assume that $c\in[1, 2)$ and $h\in\mathcal{F}_c$. Then for every $n\in\N$ there exists a function $\vt_n:[x_0, \8)\mapsto\R$ such that $\lim_{x\to\8}\vartheta_n(x)=0$ and
 \begin{align}\label{heq}
   xh^{(n)}(x)=h^{(n-1)}(x)(\a_n+\vt_n(x)),\ \ \mbox{for every \ $x\ge x_0$,}
 \end{align}
 where $\a_n=c-n+1$, $\vt_1(x)=\vt(x)$. If $c=1$ and $n=2$, then there exist constants $0<c_1\le c_2$ and a function $\vr:[x_0, \8)\mapsto[c_1, c_2]$, such that
 \begin{align}\label{heqc1}
   xh''(x)=h'(x)\vt(x)\vr(x),\ \ \mbox{for every \ $x\ge x_0$.}
 \end{align}
\end{lem}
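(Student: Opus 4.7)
The plan is to extract both claims directly from Lemma~\ref{lem1} by careful bookkeeping of the two natural normalizations $h(x)$ and $\vt(x)h(x)$. For the base case $n=1$ of \eqref{heq}, I would differentiate the decomposition $h(x)=C_h x^c\ell_h(x)$ from \eqref{eq1}, using the identity $x\ell_h'(x)=\vt(x)\ell_h(x)$ built into the definition of $\ell_h$; this yields $xh'(x)=h(x)(c+\vt(x))$, so setting $\vt_1:=\vt$ and $\a_1:=c$ works, with $\vt_1(x)\to 0$ supplied by \eqref{eq2} when $c>1$ and by \eqref{eq4} when $c=1$.

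For the inductive step $n\ge 2$, I would simply define $\vt_n(x):=xh^{(n)}(x)/h^{(n-1)}(x)-\a_n$ (enlarging $x_0$ if necessary so that $h^{(n-1)}$ is non-vanishing, which is legitimate because Lemma~\ref{lem1} pins down its leading-order size) and verify $\vt_n(x)\to 0$ by writing
$$\frac{xh^{(n)}(x)}{h^{(n-1)}(x)}=\frac{x^nh^{(n)}(x)/N(x)}{x^{n-1}h^{(n-1)}(x)/N(x)},$$
with the normalizer $N(x)=h(x)$ when $c>1$ and $N(x)=\vt(x)h(x)$ when $c=1$ and $n\ge 3$. Applying \eqref{eq5} to the first case gives the quotient $c_n/c_{n-1}=c-n+1=\a_n$; applying \eqref{eq6} to the second gives $(-1)^{n-2}(n-2)!/[(-1)^{n-3}(n-3)!]=-(n-2)=2-n=\a_n$, as required.

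The only delicate case, and the reason \eqref{heqc1} is singled out, is $c=1$, $n=2$, where $x^2h''(x)/h(x)\to 0$ and the simple quotient trick above is not available. Here I would work with the normalization of \eqref{heqc1} itself: the function $\vr(x):=xh''(x)/(h'(x)\vt(x))$ is continuous and strictly positive on $[x_0,\8)$ since $h',h''>0$ by (i) and $\vt>0$ by (iii). Using Step~1 to rewrite $xh'(x)=h(x)(1+\vt(x))$, I obtain
$$\vr(x)=\frac{x^2h''(x)/(\vt(x)h(x))}{1+\vt(x)},$$
whose numerator tends to $1$ by \eqref{eq6} with $n=2$ and whose denominator tends to $1$ by \eqref{eq4}. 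Hence $\vr(x)\to 1$; a positive continuous function with a positive finite limit at infinity is bounded away from both $0$ and $\8$ on $[x_0,\8)$ (compactness on $[x_0,R]$ combined with the limit beyond a suitable $R$), giving constants $0<c_1\le c_2$ and proving \eqref{heqc1}. Setting $\vt_2(x):=\vt(x)\vr(x)$ — which tends to $0$ — then completes \eqref{heq} for $c=1$, $n=2$, since $\a_2=c-1=0$.

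I do not expect a substantive obstacle: Lemma~\ref{lem1} carries all the analytic content, and the argument is essentially an organized translation between the two normalizations. The only point that warrants any care is verifying that $\vr$ is bounded below by a positive constant on the full half-line $[x_0,\8)$ rather than merely near infinity, and that is a standard consequence of continuity together with the existence of a positive limit at infinity.
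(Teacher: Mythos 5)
Your proposal is correct and follows essentially the same route as the paper: the base case from $x\ell_h'(x)=\vt(x)\ell_h(x)$, the definition $\vt_n(x)=xh^{(n)}(x)/h^{(n-1)}(x)-\a_n$ with the limits supplied by \eqref{eq5} and \eqref{eq6}, and the special case $c=1$, $n=2$ handled via $\vr(x)=xh''(x)/(h'(x)\vt(x))\to 1$. Your extra remarks (non-vanishing of $h^{(n-1)}$ after enlarging $x_0$, and the deduction of the bounds $c_1,c_2$ from continuity plus the limit) only make explicit points the paper leaves implicit.
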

\begin{proof}
We may assume, without loss of generality that the constant $C_h=1$. Since $h(x)=x^c\ell_h(x)$ and $x\ell'_h(x)=\ell_h(x)\vt(x)$, then
\begin{align*}
  h'(x)=x^{c-1}\ell_h(x)(c+\vt(x)),
\end{align*}
thus taking $\vt_1(x)=\vt(x)$ we obtain \eqref{heq} for $n=1$. Generally, we see that for $c>1$ and $n\ge1$, or $c=1$ and $n\ge3$ we have
\begin{align*}
  xh^{(n)}(x)=h^{(n-1)}(x)(\a_{n}+\vt_n(x)), \ \ \ \mbox{where} \ \ \ \vt_n(x)=\frac{xh^{(n)}(x)}{h^{(n-1)}(x)}-\a_{n},
\end{align*}
and $\lim_{x\to\8}\vt_n(x)=0$, by \eqref{eq5} or \eqref{eq6} respectively. If $c=1$ and $n=2$, then we have $xh''(x)=h'(x)\vt(x)\vr(x)$, where $\vr(x)=\frac{xh''(x)}{h'(x)\vt(x)}$ and $\lim_{x\to\8}\vr(x)=\lim_{x\to\8}\frac{x^2h''(x)}{\vt(x)h(x)}\frac{h(x)}{xh'(x)}=1$, by \eqref{eq6}.
This completes the proof of the lemma.
\end{proof}

We will look more closely at the function $\vp$ being the inverse function to the function $h\in\mathcal{F}_c$ and we collect all required properties its derivatives in the following.

\begin{lem}
Assume that $c\in[1, 2)$, $h\in\mathcal{F}_c$, $\g=1/c$ and let $\vp:[h(x_0), \8)\mapsto[x_0, \8)$ be its inverse. Let $\te$ be the function defined as in \eqref{tetadef}. If $c>1$, then for every $n\in\N$ we have
\begin{align}\label{eq9}
  \lim_{x\to\8}x^n\te^{(n)}(x)=0.
\end{align}
If $c=1$, then for every $n\in\N$ we have
\begin{align}\label{eq10}
  \lim_{x\to\8}\frac{x^n\te^{(n)}(x)}{\te(x)}=0.
\end{align}
\end{lem}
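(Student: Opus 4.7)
The plan is to rewrite $\te$ as a composition of three functions and twice apply Faà di Bruno's formula. Observe that
\[
\te(x) = F(\vartheta(\vp(x))), \qquad F(u) = \frac{1}{c+u} - \frac{1}{c},
\]
with $F$ smooth near $u = 0$, $F(0) = 0$, and each $F^{(k)}(u)$ uniformly bounded for $u$ in a neighborhood of $0$. Since $\vartheta(\vp(x)) \to 0$ as $x \to \8$, all factors $F^{(j)}(\vartheta(\vp(x)))$ that arise are $O(1)$. Applying Faà di Bruno first to $F \circ (\vartheta \circ \vp)$ and then to $\vartheta \circ \vp$ expresses $\te^{(n)}(x)$ as a double sum, over set partitions $\pi$ of $\{1, \ldots, n\}$ into $j$ blocks $B$ and further set partitions $\sigma_B$ of each $B$ into $m_B$ parts, of terms of the form
\[
F^{(j)}(\vartheta(\vp(x))) \prod_{B \in \pi} \biggl( \vartheta^{(m_B)}(\vp(x)) \prod_{B' \in \sigma_B} \vp^{(|B'|)}(x)\biggr),
\]
where the orders of the $\vp$-derivatives sum to $n$. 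I proceed by induction on $n$, coupled with the auxiliary bound $|\vp^{(j)}(x)| \lesssim \vp(x)/x^j$ for every $j \geq 1$.

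For the auxiliary bound, use $\vp(x) = x^{\g} \ell_{\vp}(x)$ with $x\ell_{\vp}'(x) = \ell_{\vp}(x)\te(x)$ from \eqref{funfi}. A verbatim repetition of the inductive polynomial argument in the proof of Lemma \ref{lem1}, with $\ell_{\vp}$ and $\te$ in the roles of $\ell_h$ and $\vt$, gives
\[
\frac{x^m \ell_{\vp}^{(m)}(x)}{\ell_{\vp}(x)} = P_m\bigl(\te(x), x\te'(x), \ldots, x^{m-1}\te^{(m-1)}(x)\bigr)
\]
for a polynomial $P_m$ with $P_m(0, \ldots, 0) = 0$. Under the inductive hypothesis that $x^k \te^{(k)}(x) \to 0$ for $k \leq n-1$ (or, in the case $c=1$, the weaker $o(\te(x))$ statement, which still forces each argument of $P_m$ to tend to $0$), we get $x^m \ell_{\vp}^{(m)}(x)/\ell_{\vp}(x) \to 0$ for $m \leq n$, and a Leibniz expansion of $\vp^{(m)} = (x^{\g}\ell_{\vp})^{(m)}$ delivers $|\vp^{(m)}(x)| \lesssim \vp(x)/x^m$.

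In the case $c > 1$, hypothesis \eqref{eq2} yields $\vartheta^{(m)}(y) = o(y^{-m})$, so $\vartheta^{(m)}(\vp(x)) = o(\vp(x)^{-m})$. For a block $B$ of size $k$ split into $m_B$ parts by $\sigma_B$, the previous paragraph gives $\prod_{B' \in \sigma_B}\vp^{(|B'|)}(x) \lesssim \vp(x)^{m_B}/x^k$; multiplying by $\vartheta^{(m_B)}(\vp(x)) = o(\vp(x)^{-m_B})$, each inner summand is $o(x^{-k})$, hence $x^k(\vartheta \circ \vp)^{(k)}(x) = o(1)$. Since $F^{(j)}(\vartheta(\vp(x))) = O(1)$ and $\sum_{B \in \pi}|B| = n$, substitution yields $x^n \te^{(n)}(x) = o(1)$, establishing \eqref{eq9}. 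The base case $n=1$ is direct: $x\te'(x) = -(c+\vartheta(\vp(x)))^{-2}\,\vp(x)\vartheta'(\vp(x))\,(x\vp'(x)/\vp(x)) \to 0$, since $y\vartheta'(y) \to 0$ and $x\vp'(x)/\vp(x) \to \g$.

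In the case $c = 1$, hypothesis \eqref{eq4} upgrades the estimate to $\vartheta^{(m)}(\vp(x)) = o(\vartheta(\vp(x))/\vp(x)^m)$, and the identical computation yields $x^k(\vartheta \circ \vp)^{(k)}(x) = o(\vartheta(\vp(x))/x^k)$. For any partition $\pi$ of $\{1,\ldots,n\}$ into $j \geq 1$ blocks, the product of these inner contributions is $o(\vartheta(\vp(x))^j/x^n) = o(\vartheta(\vp(x))/x^n)$, as $\vartheta(\vp(x)) \to 0$. Since $|\te(x)| \simeq \vartheta(\vp(x))$ by \eqref{tetadef}, we obtain $x^n \te^{(n)}(x) = o(\te(x))$, establishing \eqref{eq10}. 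The main obstacle will be orchestrating the double Faà di Bruno bookkeeping so that each power $\vp(x)^{m_B}$ produced by the derivatives of $\vp$ cancels exactly against the decay of $\vartheta^{(m_B)}(\vp(x))$, and coupling this accounting with the joint induction between the target limits on $\te^{(n)}$ and the auxiliary estimate on $\vp^{(j)}$.
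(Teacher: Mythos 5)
Your proof is correct, but it is organized differently from the paper's. You differentiate $\te=F\circ\vartheta\circ\vp$ directly, with $F(u)=\frac{1}{c+u}-\frac1c$, via a double Fa\'a di Bruno expansion, and you pay for this with the auxiliary estimate $|\vp^{(j)}(x)|\lesssim \vp(x)x^{-j}$, which is \emph{not} available at this point of the paper (the derivative asymptotics for $\vp$ are only proved afterwards, in Lemma \ref{funlemfi}, using the present lemma); you correctly avoid this circularity by rederiving the needed bound inside your induction from \eqref{funfi} and $x\ell_{\vp}'(x)=\ell_{\vp}(x)\te(x)$, repeating the polynomial recursion of Lemma \ref{lem1} with $(\ell_{\vp},\te)$ in place of $(\ell_h,\vt)$ and feeding it only the lower-order information on $\te$ supplied by the inductive hypothesis. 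The paper instead never touches derivatives of $\vp$: it works with the explicit function $\te\circ h$, combines Fa\'a di Bruno for $[\te\circ h]^{(n)}$ with the identity $(c+\vartheta(x))^2[\te\circ h]'(x)=-\vt'(x)$, solves for the top-order term $D_n\te(h(x))$, and controls everything through the $h$-derivative asymptotics of Lemma \ref{lem1} together with \eqref{eq2}/\eqref{eq4}. What each route buys: yours is more direct and the bookkeeping is lighter, since crude upper bounds on each Fa\'a di Bruno block suffice (the powers $\vp(x)^{m_B}$ cancel exactly against $\vartheta^{(m_B)}(\vp(x))=o(\vp(x)^{-m_B})$, resp.\ $o(\vartheta(\vp(x))\vp(x)^{-m_B})$, and $F^{(j)}(\vartheta(\vp(x)))=O(1)$ because $c+\vartheta(\vp(x))$ stays away from $0$); the paper's route keeps the logical order $h\to\te\to\vp$ rigid and needs no a priori control of $\vp^{(j)}$, at the cost of the more involved ``solve for $D_n\te(h(x))$'' manipulation. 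Two cosmetic points: spell out the $c=1$ base case as well (the same formula divided by $\te(x)\simeq-\vartheta(\vp(x))$ together with \eqref{eq4} gives $x\te'(x)/\te(x)\to0$; in fact your general step already covers $n=1$, since the bound $|\vp'(x)|\lesssim\vp(x)/x$ follows from \eqref{tetadef} without any inductive input), and note explicitly that in the $c=1$ case the hypothesis $x^k\te^{(k)}(x)=o(\te(x))$ yields $x^k\te^{(k)}(x)\to0$ because $\te(x)\to0$, which is what the recursion for $\ell_{\vp}$ actually consumes.
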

\begin{proof}
Let $D_nf(x)=\frac{d^n}{dx}f(x)$ denotes  the operator of the $n$--th derivative as in Lemma \ref{lem1} and $D_0f(x)=f(x)$. According to \eqref{tetadef} we know that $\te(h(x))=\frac{1}{(c+\vartheta(x))}-\g$. We only show the case when $c=1$, the same reasoning applies to the case $c>1$. Equivalently, it suffices to show that $\lim_{x\to\8}\frac{h(x)^nD_n\te(h(x))}{\te(h(x))}=0$. We shall proceed by induction with respect to $n\in\N$. For $n=1$, we note that
 \begin{align*}
   \lim_{x\to\8}\frac{h(x)D_1\te(h(x))}{\te(h(x))}=\lim_{x\to\8}\frac{ch(x)}{xh'(x)(c+\vt(x))}
   \cdot\frac{x\vt'(x)}{\vt(x)}=0,
 \end{align*}
 by \eqref{eq4} and Lemma \ref{lem1}. Assume now that \eqref{eq10} is true for any $1\le k\le n-1$. We show that it holds for $k=n$. First of all, observe that for every $n\in\N$, by the Fa\'{a} di Bruno formula applied to $[\te\circ h](x)$, we obtain
\begin{align}\label{eq12}
  D_{n}[\te\circ h](x)&=\sum_{\genfrac{}{}{0pt}{}{0\le m_1,\ldots,m_n\le n}{m_1+2m_2+\ldots+nm_n=n}}
  \binom{n}{m_1,\ldots, m_n}D_{m_1+\ldots+m_n}\te(h(x))\prod_{l=1}^n\bigg(\frac{D_lh(x)}{l!}\bigg)^{m_l}\\
 \nonumber =D_{n}\te(h(x))\big(D_1h(x)\big)^n&+
  \sum_{\genfrac{}{}{0pt}{}{0\le m_1,\ldots,m_n\le n, m_1\not=n}{m_1+2m_2+\ldots+nm_n=n}}
  \binom{n}{m_1,\ldots, m_n}D_{m_1+\ldots+m_n}\te(h(x))\prod_{l=1}^n\bigg(\frac{D_lh(x)}{l!}\bigg)^{m_l}\\
  \nonumber=D_{n}\te(h(x))\big(D_1h(x)\big)^n&+\sum_{r=1}^{n-1}D_{r}\te(h(x))
  \sum_{\tiny{\begin{array}{c}
  0\le m_1,\ldots,m_n\le n, m_1\not=n\\
  m_1+2m_2+\ldots+nm_n=n\\
  m_1+m_2+\ldots+m_n=r
  \end{array}
  }}a_{m_1,\ldots,m_n}\prod_{l=1}^n\big(D_lh(x)\big)^{m_l},
\end{align}
where $a_{m_1,\ldots,m_n}=\binom{n}{m_1,\ldots, m_n}\cdot\prod_{l=1}^n\left(\frac{1}{l!}\right)^{m_l}$.
By the Leibniz rule, we have
 \begin{align}\label{eq13}
  D_n\big((c+\vartheta(x))^2\big)=2(c+\vt(x))D_n\vt(x)+\sum_{k=1}^{n-1}\binom{n}{k}D_k\vt(x)D_{n-k}\vt(x).
 \end{align}
 Since $(c+\vartheta(x))^2[\te\circ h]'(x)=-\vt'(x)$ then one can see that again the Leibniz rule applied to $D_{n-1}\big((c+\vartheta(x))^2[\te\circ h]'(x)\big)$, gives
 \begin{align*}
   (c+\vartheta(x))^2D_{n}[\te\circ h](x)=-D_n\vt(x)-\sum_{k=1}^{n-1}\binom{n-1}{k}D_k\big((c+\vartheta(x))^2\big)D_{n-k}[\te\circ h](x).
 \end{align*}
 Thus combining the last identity with \eqref{eq12} and $\te(h(x))=\frac{-\vt(x)}{c(c+\vt(x))}$ we get
\begin{multline}\label{eq14}
   \frac{h(x)^nD_n\te(h(x))}{\te(h(x))}=\frac{c}{(c+\vartheta(x))}\frac{x^nD_n\vt(x)}{\vt(x)}
   \frac{h(x)^n}{\big(xD_1h(x)\big)^n}\\
   -
  \sum_{k=1}^{n-1}\binom{n-1}{k}
  \frac{h(x)^k}{\big(xD_1h(x)\big)^{k}}\frac{x^kD_k\big((c+\vartheta(x))^2\big)}{(c+\vartheta(x))^2}
  \frac{h(x)^{n-k}D_{n-k}[\te\circ h](x)}{\te(h(x))\big(D_1h(x)\big)^{n-k}}\\
  -\sum_{k=1}^{n-1}\frac{h(x)^kD_{k}\te(h(x))}{\te(h(x))}
  \sum_{\tiny{\begin{array}{c}
  0\le m_1,\ldots,m_n\le n, m_1\not=n\\
  m_1+2m_2+\ldots+nm_n=n\\
  m_1+m_2+\ldots+m_n=k
  \end{array}
  }}a_{m_1,\ldots,m_n}\prod_{l=1}^n\frac{h(x)^{n-k}\big(D_lh(x)\big)^{m_l}}{\big(D_1h(x)\big)^n}\\
  =I_1(x)+I_2(x)+I_3(x).
\end{multline}
Now we have to show that $\lim_{x\to\8}I_j(x)=0$ for $j=1, 2, 3$. By Lemma \ref{lem1} and \eqref{eq4} it is obvious that $\lim_{x\to\8}I_1(x)=0$. By Lemma \eqref{lem1} and \eqref{eq13} we have
\begin{align*}
  \lim_{x\to\8}\frac{h(x)^k}{\big(xD_1h(x)\big)^{k}}=c^k=1, \ \ \ \mbox{and}\ \ \ \ \lim_{x\to\8}\frac{x^kD_k\big((c+\vartheta(x))^2\big)}{(c+\vartheta(x))^2}=0,
\end{align*}
and $\lim_{x\to\8}I_2(x)=0$ if we show that for every $1\le k\le n-1$
\begin{align*}
 \lim_{x\to\8}\frac{h(x)^{k}D_{k}[\te\circ h](x)}{\te(h(x))\big(D_1h(x)\big)^{k}}=0.
\end{align*}
 For this purpose we use formula \eqref{eq12} and inductive hypothesis. Indeed,
 \begin{multline*}
\frac{h(x)^{k}D_{k}[\te\circ h](x)}{\te(h(x))\big(D_1h(x)\big)^{k}}=
\frac{h(x)^{k}D_{k}\te(h(x))}{\te(h(x))}\\
+\sum_{r=1}^{k-1}\frac{h(x)^{r}D_{r}\te(h(x))}{\te(h(x))}
  \sum_{\tiny{\begin{array}{c}
  0\le m_1,\ldots,m_k\le k, m_1\not=k\\
  m_1+2m_2+\ldots+km_k=k\\
  m_1+m_2+\ldots+m_k=r
  \end{array}
  }}a_{m_1,\ldots,m_k}\prod_{l=1}^k\frac{h(x)^{k-r}\big(D_lh(x)\big)^{m_l}}{\big(D_1h(x)\big)^{k}}.
\end{multline*}
In view of the inductive hypothesis it only remains to estimate the inner sum, or more precisely the last product. Namely, observe that
\begin{align*}
 \prod_{l=1}^k\frac{h(x)^{k-r}\big(D_lh(x)\big)^{m_l}}{\big(D_1h(x)\big)^{k}}
 &=\prod_{l=2}^k\frac{h(x)^{k-r}\big(D_lh(x)\big)^{m_l}}{\big(D_1h(x)\big)^{k-m_1}}=
 \frac{h(x)^{k-r}\big(\vt(x)h(x)\big)^{r-m_1}}{\big(xD_1h(x)\big)^{k-m_1}}
 \prod_{l=2}^k\left(\frac{x^lD_lh(x)}{\vt(x)h(x)}\right)^{m_l}\\
 &=\vt(x)^{r-m_1}\left(\frac{h(x)}{xD_1h(x)}\right)^{k-m_1}
 \prod_{l=2}^k\left(\frac{x^lD_lh(x)}{\vt(x)h(x)}\right)^{m_l},
\end{align*}
thus in view of Lemma \ref{lem1} and \eqref{eq4} the last expression tends to $0$ as $x\to\8$ as desired. In view of the inductive hypothesis one can show that $\lim_{x\to\8}I_3(x)=0$ arguing in a similar way as above. This finishes the proof of the lemma.
\end{proof}

\begin{lem}\label{funlemfi}
Assume that $c\in[1, 2)$, $h\in\mathcal{F}_c$, $\g=1/c$ and let $\vp:[h(x_0), \8)\mapsto[x_0, \8)$ be its inverse. Then
 for every $n\in\N$ there exists  a function $\theta_n:[h(x_0), \8)\mapsto\R$ such that $\lim_{x\to\8}\te_i(x)=0$ and
 \begin{align}\label{fiequat}
   x\vp^{(n)}(x)=\vp^{(n-1)}(x)(\b_n+\theta_n(x)), \ \ \mbox{for every \  $x\ge h(x_0)$,}
 \end{align}
  where $\b_n=\g-n+1$ and $\te_1(x)=\te(x)$. If $c=1$, then there exists a positive function $\s:[h(x_0), \8)\mapsto(0, \8)$ and a function $\t:[h(x_0), \8)\mapsto \R$ such that \eqref{fiequat} with $i=2$ reduces to
\begin{align}\label{fiequat1}
  x\vp''(x)=\vp'(x)\s(x)\t(x),\ \ \mbox{for every \  $x\ge h(x_0)$.}
\end{align}
Moreover, $\s(x)$  is decreasing, $\lim_{x\to\8}\s(x)=0,$ $\s(2x)\simeq\s(x),$ and  $\s(x)^{-1}\lesssim_{\varepsilon}x^{\varepsilon},$
for every $\varepsilon>0$. Finally, there are constants $0<c_3\le c_4$ such that  and $c_3\le-\t(x)\le c_4$ for every $x\ge h(x_0)$.
\end{lem}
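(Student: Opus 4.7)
My plan is to proceed by induction on $n$. The base case $n=1$ is already recorded in \eqref{funfi} and \eqref{tetadef}, giving $\te_1 = \te$ and $\b_1 = \g$. For the inductive step, I would differentiate the identity $x\vp^{(n-1)}(x) = \vp^{(n-2)}(x)(\b_{n-1} + \te_{n-1}(x))$ to obtain
\begin{align*}
x\vp^{(n)}(x) = \vp^{(n-1)}(x)(\b_n + \te_{n-1}(x)) + \vp^{(n-2)}(x)\te_{n-1}'(x),
\end{align*}
with $\b_n = \b_{n-1} - 1 = \g - n + 1$. Whenever $\b_{n-1} \neq 0$, the inductive hypothesis lets me write $\vp^{(n-2)}(x) = x\vp^{(n-1)}(x)/(\b_{n-1} + \te_{n-1}(x))$ for $x$ sufficiently large, and I can then absorb the remainder by setting $\te_n(x) = \te_{n-1}(x) + x\te_{n-1}'(x)/(\b_{n-1} + \te_{n-1}(x))$, which tends to $0$ provided $x\te_{n-1}'(x) \to 0$.

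To keep the induction alive, I need to carry along the stronger decay $x^k\te_n^{(k)}(x) \to 0$ for every $k \in \N$ when $c > 1$, and $x^k\te_n^{(k)}(x)/\te_n(x) \to 0$ when $c = 1$. This bookkeeping is the main technical point, but the preceding results already supply the base-level control through \eqref{eq9} and \eqref{eq10}, and since $\te_n$ is assembled from $\te$, $\vartheta \circ \vp$ and their derivatives by finitely many algebraic operations and differentiations, the Fa\`a di Bruno expansion together with Lemma \ref{lem1} propagates the required flatness at each step --- essentially the same calculation as in the preceding lemma.

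The genuinely new case is $c = 1$, $n = 2$, where $\b_2 = \g - 1 = 0$, so the remainder $\vp^{(n-2)}\te'_{n-1} = \vp\,\te'$ can no longer be packaged as a multiplicative perturbation of a vanishing main term. Here I would substitute $\vp(x) = x\vp'(x)/(1 + \te(x))$ into the differentiated identity to obtain
\begin{align*}
x\vp''(x) = \vp'(x)\te(x)\left[1 + \frac{x\te'(x)}{\te(x)(1 + \te(x))}\right].
\end{align*}
By \eqref{tetadef}, $\te(x) = -\vartheta(\vp(x))/(1 + \vartheta(\vp(x)))$ is strictly negative, so setting $\s(x) := \vartheta(\vp(x))/(1 + \vartheta(\vp(x)))$ and $\t(x) := -\bigl[1 + x\te'(x)/(\te(x)(1 + \te(x)))\bigr]$ produces \eqref{fiequat1}. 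Estimate \eqref{eq10} forces $\t(x) \to -1$, so $-\t$ lies between two positive constants once $x_0$ is enlarged.

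Finally, the listed properties of $\s$ all follow from those of $\vartheta$ imposed in Definition \ref{defn}(iii): positivity and monotonicity come from $\vartheta > 0$ decreasing, $\vp$ increasing, and $y \mapsto y/(1+y)$ increasing on $(0,\infty)$; the limit $\s \to 0$ is immediate; the doubling $\s(2x) \simeq \s(x)$ combines \eqref{compfi} with the slowly varying character of $\vartheta$ guaranteed by $x\vartheta'/\vartheta \to 0$ in \eqref{eq4}; and $\s(x)^{-1} \lesssim_\e x^\e$ follows from \eqref{eq3} together with $\vp(x) \lesssim x$ from \eqref{ratefi}.
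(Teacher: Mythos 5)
Your proposal is correct and lands on exactly the paper's objects: your $\s(x)=\vartheta(\vp(x))/(1+\vartheta(\vp(x)))=-\te(x)$ and your $\t$ agree with the paper's choices, and your verification of the properties of $\s$ (positivity and monotonicity from Definition \ref{defn}(iii), the doubling $\s(2x)\simeq\s(x)$ via the mean value theorem and $x\vt'(x)/\vt(x)\to0$ from \eqref{eq4}, and $\s(x)^{-1}\lesssim_{\e}x^{\e}$ from \eqref{eq3} and \eqref{ratefi}) is the paper's argument in substance. Where you genuinely differ is the induction for \eqref{fiequat}: the paper never differentiates the ratio identity; instead it notes that $\te$ satisfies the analogues \eqref{eq9}--\eqref{eq10} of the hypotheses on $\vt$, reruns the Lemma \ref{lem1} computation for $\vp=x^{\g}\ell_{\vp}$ to obtain the asymptotics \eqref{eq15}--\eqref{eq16} for $x^n\vp^{(n)}(x)/\vp(x)$ (normalized by $\te(x)\vp(x)$ when $c=1$), and then sets $\te_n(x)=x\vp^{(n)}(x)/\vp^{(n-1)}(x)-\b_n$ and reads off $\te_n\to0$ by dividing consecutive limits, exactly as Lemma \ref{filem} follows from Lemma \ref{lem1}. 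Your route, differentiating $x\vp^{(n-1)}=\vp^{(n-2)}(\b_{n-1}+\te_{n-1})$ to get the recursion $\te_n=\te_{n-1}+x\te_{n-1}'/(\b_{n-1}+\te_{n-1})$, consumes the same inputs but shifts the burden onto flatness of the correction terms, which you assert rather than execute; the details are routine (each $\te_n$ is a rational expression in $\te, x\te',\dots$), so this is acceptable bookkeeping, while the paper's normalization against $\vp$ keeps all derivative analysis inside the single Lemma \ref{lem1}-type computation. One point you should flag: for $c=1$ the degeneracy is not confined to $n=2$, since at the next step $\b_2=0$ and your substitution divides by $\b_2+\te_2=\te_2\to0$; it is precisely the relative flatness $x\te_2'(x)/\te_2(x)\to0$ (available from \eqref{eq10} and $\te_2=\te+x\te'/(1+\te)$, i.e.\ the hypothesis you carry for $c=1$) that rescues the passage from $n=2$ to $n=3$. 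Your special case $c=1$, $n=2$ is sound: the identity $x\vp''=\vp'\te\,[1+x\te'/(\te(1+\te))]$ is correct, \eqref{eq10} gives $\t\to-1$, and the two-sided bound on $-\t$ then holds after enlarging $x_0$, which is also how the paper's claim must be read.
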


\begin{proof}
In fact, \eqref{fiequat} for $n=1$ with $\te_1(x)=\te(x)$, has been shown in \eqref{tetadef}.
Arguing likewise in the proof of Lemma \ref{lem1} we show, in view of \eqref{eq9} and \eqref{eq10}, that if $c>1$, then for every $n\in\N$
\begin{align}\label{eq15}
  \lim_{x\to\8}\frac{x^n\vp^{(n)}(x)}{\vp(x)}=\g(\g-1)(\g-2)\cdot\ldots\cdot(\g-n+1).
\end{align}
If $c=1$, then for every $n\in\N$ with $n\ge2$
\begin{align}\label{eq16}
  \lim_{x\to\8}\frac{x^n\vp^{(n)}(x)}{\te(x)\vp(x)}=(-1)^{n-2}(n-2)!.
\end{align}
Now we see that \eqref{fiequat} and \eqref{fiequat1} follow from \eqref{eq15} and \eqref{eq16} and their proofs run as the proof of Lemma \ref{filem} with obvious modification for $c=1$ with $n=2$. In this case it is easy to see that it suffices to take
\begin{align*}
  \s(x)=1-\frac{1}{1+\vt(\vp(x))}=-\te(x), \ \ \ \mbox{and}\ \ \ \t(x)=-\frac{x\vp''(x)}{\vp'(x)\s(x)},
\end{align*}
and these have desired properties by \eqref{eq3} and Lemma \ref{filem}. It only remains to verify that $\s(2x)\simeq\s(x)$. For this purpose it is enough to prove that $\vt(2x)\simeq\vt(x)$.
Notice that for some $\xi_x\in(0, 1)$ we have
\begin{align*}
  \left|\frac{\vt(2x)}{\vt(x)}-1\right|=\left|\frac{(x+\xi_xx)\vt'(x+\xi_xx)}{\vt(x+\xi_xx)}\right|
  \frac{x}{x+\xi_xx}\frac{\vt(x+\xi_xx)}{\vt(x)}\le\left|\frac{(x+\xi_xx)\vt'(x+\xi_xx)}{\vt(x+\xi_xx)}\right|
  \ _{\overrightarrow{x\to\8}}\ 0,
\end{align*}
since $\vt(x)$ is decreasing. This completes the proof.
\end{proof}

\section{Estimates for some exponential sums}\label{sectexp}
In this section we will be concerned with the estimates of some exponential sums (see Lemma \ref{finboundlem} and Lemma \ref{vdclem2} below) which will be critical in the proof of Lemma \ref{formlem}. Let us recall that $\mu(n)$ denotes the M\"{o}bius function and $\Lambda(n)$ denotes von Mangoldt's function, i.e.
$$\mu(n)=\left\{ \begin{array} {ll}
\ \ 1, & \mbox{if $n=1$,}\\
(-1)^k, & \mbox{if $n$ is the product of $k$ distinct primes,}\\
\ \ 0,& \mbox{if $n$ is divisible by the square of a prime,}
\end{array}
\right.$$
and
$$\Lambda(n)=\left\{ \begin{array} {ll}
\log p, & \mbox{if $n=p^m$ for some $m\in\N$ and $p\in\mathbf{P}$,}\\
\ \ 0,& \mbox{otherwise.}
\end{array}
\right.$$
Our purpose will be to prove the following.
\begin{lem}\label{finboundlem}
Assume that $P\ge1$, $\xi\in[0, 1]$ and $M=P^{1+\chi+\e}\vp(P)^{-1}$ with  $\chi>0$  such that $(2^{2q+2}+2^q-2)(1-\g)+2^q(2^{q+3}-2)\chi<1$ and $0<\e<\frac{\chi}{100(2^{q+2}-1)}$. Let $W:\Z\mapsto\Z$ be a fixed polynomial of degree $q\in\N$.  Then for every $0< |m|\le M$ we have
\begin{multline}\label{finbound}
  \bigg|\sum_{P<k\le P_1\le 2P}\Lambda(k)e^{2\pi i(\xi W(k)+m\vp(k))}\bigg|\\
\lesssim |m|^{\frac{1}{2^{q+1}-2}}\log^2 P_1\ \big(\s(P_1)\vp(P_1)\big)^{-\frac{1}{2^q}}P_1^{1+\frac{2^{q+1}-2}{2^{2q+1}+2^q-2}}\\
 + |m|^{\frac{1}{2^{q+2}-2}}\log^{6}P_1\ \big(\s(P_1)\vp(P_1)\big)^{-\frac{2^{q+1}-2}{2^q(2^{q+2}-2)}}P_1^{1+\frac{2^q-1}{2^q(2^{q+2}-2)}}.
\end{multline}
If $c>1$ then the function $\s$ is constantly equal to $1$.
\end{lem}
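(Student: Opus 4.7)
The plan is to combine Vaughan's identity with iterated Weyl/Van der Corput differencing, following the classical strategy for exponential sums over primes but with the polynomial phase replaced by $\xi W(k)+m\vp(k)$. First, I would apply Vaughan's identity (with parameters $U,V$ to be chosen) to the weight $\La(k)$, decomposing the sum into a family of Type I sums
\begin{equation*}
S_I(D)=\sum_{d\le D}a_d\sum_{P/d<r\le P_1/d}e^{2\pi i(\xi W(dr)+m\vp(dr))},\qquad |a_d|\lesssim\log P,
\end{equation*}
and Type II (bilinear) sums
\begin{equation*}
S_{II}(M_0,N_0)=\sum_{M_0<n\le 2M_0}\sum_{N_0<l\le 2N_0}\alpha_n\beta_l\,e^{2\pi i(\xi W(nl)+m\vp(nl))},
\end{equation*}
with $|\alpha_n|,|\beta_l|\lesssim\log P$ and $M_0N_0\simeq P$, reducing the estimate to these two model cases.

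For $S_I$ I would hold $d$ fixed and apply Weyl's differencing ($A$-process) to the inner sum over $r$. Each application squares the sum and lowers the degree in $r$ of the polynomial phase $\xi W(dr)$ by one; after $q-1$ iterations the polynomial part is linear in $r$, while $m\vp(dr)$ has been replaced by a $(q-1)$-fold finite difference. By the derivative asymptotics $|x^n\vp^{(n)}(x)|\simeq\vp(x)$ when $c>1$ (respectively $\simeq\s(x)\vp(x)$ when $c=1$) from Lemma \ref{funlemfi}, this iterated difference behaves like $h_1\cdots h_{q-1}\,m\,\vp^{(q-1)}(dr)$ uniformly in the shift parameters. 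A final Kuzmin--Landau / second-derivative estimate ($B$-process) then gains a factor controlled by $|m|\s(P)\vp(P)$, and tracking the $2^{q-1}$ squarings produces the first term on the right of \eqref{finbound}.

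For $S_{II}$ I would first apply Cauchy--Schwarz in $n$ to reach
\begin{equation*}
|S_{II}|^2\lesssim M_0\log^2 P\sum_{l_1,l_2}\Bigl|\sum_n e^{2\pi i(\xi(W(nl_1)-W(nl_2))+m(\vp(nl_1)-\vp(nl_2)))}\Bigr|.
\end{equation*}
The diagonal $l_1=l_2$ contributes $M_0N_0^2\log^2 P$; for $l_1\neq l_2$, the polynomial in $n$ still has degree $q$, and I would again apply Weyl differencing $q-1$ more times followed by the second-derivative test. The second $n$-derivative of $m(\vp(nl_1)-\vp(nl_2))$ is, by Lemma \ref{funlemfi}, comparable to $|m|\s(P)\vp(P)|l_1-l_2|/(P^2l)$ for $l\simeq N_0$, and this yields the second term on the right of \eqref{finbound}; the larger denominator $2^{q+2}-2$ in the exponent of $|m|$ comes from the extra squaring introduced by Cauchy--Schwarz.

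The final step is to optimize $U,V$ in Vaughan's identity to balance the Type I and Type II contributions. The numerical hypothesis $(2^{2q+2}+2^q-2)(1-\g)+2^q(2^{q+3}-2)\chi<1$ together with the range $|m|\le M=P^{1+\chi+\e}\vp(P)^{-1}$ is precisely what is needed to make the resulting estimate nontrivial (strictly better than the trivial $O(P\log P)$) uniformly in $\xi$ and $m$. The principal obstacle is the control of iterated finite differences of the non-polynomial function $\vp$: unlike a polynomial, each differencing step must preserve uniformity in the shift parameters and must propagate the $\s$-factor that appears in the second and higher derivatives when $c=1$. This is precisely where the derivative asymptotics for $\vp$ established in Section \ref{sectf}, and especially Lemma \ref{funlemfi}, are indispensable; once those are in hand, the remainder of the argument is bookkeeping.
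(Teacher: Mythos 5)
Your overall skeleton — Vaughan's identity followed by van der Corput-type estimates in which the degree-$q$ polynomial phase is annihilated by $(q+1)$-fold differentiation/differencing, so that only $m\varphi$ survives and the bound is uniform in $\xi$ — is the same as the paper's, and your Type I treatment is essentially a hand-rolled proof of the $(q+1)$-st derivative test that the paper simply cites (Lemma \ref{vdc} applied with $k=q+1$ inside Lemma \ref{vdclem2}). The genuine gap is in the bilinear (Type II) estimate. You double one variable by a plain Cauchy--Schwarz over its full dyadic range and then difference in $n$. After Vaughan's decomposition both bilinear variables range over $[v,P_1/v]$ with $v=P_1^{(2^{q+1}-2)/(2^{2q+1}+2^q-2)}$, and for $|m|$ as large as $M=P^{1+\chi+\e}\varphi(P)^{-1}$ the off-diagonal pairs $l_1\ne l_2$ are not summable to anything useful: the first van der Corput term summed over all $|l_1-l_2|\lesssim N_0$ produces a factor of size $\big(|m|\,\sigma(P)\varphi(P)\,N_0^{q+1}P^{-(q+1)}\big)^{1/(2^{q+2}-4)}$, which for $N_0$ near $P_1/v$ and $|m|$ near $M$ exceeds a positive power of $P$ (already for $q=1$), i.e. worse than trivial, and the flat term $(N^{k}\eta)^{-2/2^{k}}$ behaves just as badly. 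The paper avoids this (Lemma \ref{billem}, Step 3, following Heath--Brown) by using the Weyl--van der Corput \emph{shift} inequality with a truncated shift range $|r|\le R$ and then optimizing $R=\big\lceil m^{-a}K^{-b}\big(\sigma(KL)\varphi(KL)\big)^{-d}\big\rceil$ against the diagonal term $K^2L^2/R$; it is this optimization, not ``the extra squaring introduced by Cauchy--Schwarz,'' that produces the exponent $1/(2^{q+2}-2)$ (plain squaring would give $1/(2^{q+2}-4)$), and the verification $1\le R\lesssim K$ is precisely where the arithmetic hypothesis on $\chi,\gamma$, the bound $|m|\le M$, and the choice of $v$ enter. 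Without the truncated, optimized shift your Type II bound does not close, so the second term of \eqref{finbound} is not reached by the argument as proposed.

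Two smaller imprecisions: for $q\ge2$ the quantity controlling your final stationary-phase step in the Type II sum is not the second $n$-derivative of $m(\varphi(nl_1)-\varphi(nl_2))$ but its $(q+1)$-st derivative (equivalently, the second derivative of the $(q-1)$-fold difference) — a bare second-derivative test would be contaminated by the degree-$q$ polynomial $\xi\big(W(nl_1)-W(nl_2)\big)$ with $\xi$ arbitrary — and your stated size $|m|\sigma(P)\varphi(P)|l_1-l_2|/(P^2 l)$ uses the wrong length scale (the $n$-variable is of size $M_0\simeq P/N_0$, not $P$). These are repairable, but the missing shift truncation and its optimization are not bookkeeping; they are the core of the bilinear estimate.
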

The estimate \eqref{finbound} will be essential in the proof of Lemma \ref{formlem} in Section \ref{sectformlem}, there its need naturally arises. At the first glance it is difficult to deal with the exponential sum in \eqref{finbound} due to the occurrence of von Mangold function under the sum. However, exploring Vaughan's identity \eqref{vid}, we will be able to overcome this obstacle. If $v>n$ then
\begin{align}\label{vid}
  \Lambda(n)
  =\sum_{\genfrac{}{}{0pt}{}{kl=n}{l\le v}}\log k\ \mu(l)
  -\sum_{l\le v^2}\sum_{kl=n}\Pi_{v}(l)+\sum_{\genfrac{}{}{0pt}{}{kl=n}{k>v, l>v}}\Lambda(k)\Xi_v(l),
\end{align}
where
\begin{align}\label{pixi}
  \Pi_{v}(l)=\sum_{\genfrac{}{}{0pt}{}{rs=l}{r\le v, s\le v}}\Lambda(r)\mu(s), \ \ \ \mbox{and}\ \ \ \ \Xi_{v}(l)=\sum_{\genfrac{}{}{0pt}{}{d|l}{d>v}}\mu(d).
\end{align}
The proof of \eqref{vid} can be found in \cite{IK} see Proposition 13.4, page 345 or in \cite{GK} Lemma 4.12, page 49.
Moreover, for every $L\in\N$, we have
\begin{align}\label{aritmfun}
\sum_{L<l\le2L}|\Pi_{v}(l)|^2\lesssim L\log^2 L, \ \ \ \ \mbox{and}\ \ \ \ \sum_{L<l\le2L}|\Xi_v(l)|^2\lesssim L\log^3L.
\end{align}
\begin{proof}[Proof of Lemma \ref{finboundlem}]
Setting
\begin{align*}
  v=P_1^{\frac{2^{q+1}-2}{2^{2q+1}+2^q-2}},
\end{align*}
we immediately see, in view of \eqref{vid}, that
\begin{align}\label{vsplit}
 \sum_{P<n\le P_1\le 2P}\Lambda(n)&e^{2\pi i(\xi W(n)+m\vp(n))}=
 \sum_{l\le v}\sum_{P/l<k\le P_1/l}\log k\ \mu(l)e^{2\pi i(\xi W(kl)+m\vp(kl))}\\
\nonumber &-\bigg(\sum_{l\le v}+\sum_{v<l\le v^2}\bigg)\sum_{P/l<k\le P_1/l}\Pi_{v}(l)e^{2\pi i(\xi W(kl)+m\vp(kl))}\\
 \nonumber &+\sum_{v<l\le P_1/v}\sum_{\genfrac{}{}{0pt}{}{P/l< k\le P_1/l}{k>v}}\Lambda(k)\Xi_v(l)e^{2\pi i(\xi W(kl)+m\vp(kl))}=S_1-S_{21}-S_{22}+S_3.
\end{align}
Hence we are reduced to estimate these sums. It suffices to show that
\begin{align}\label{ineq1}
  |S_1|,\ |S_{21}|\lesssim |m|^{\frac{1}{2^{q+1}-2}}\log^2 P_1\ \big(\s(P_1)\vp(P_1)\big)^{-\frac{1}{2^q}}P_1^{1+\frac{2^{q+1}-2}{2^{2q+1}+2^q-2}},
\end{align}
and
\begin{align}\label{ineq2}
 |S_{22}|,\ |S_{3}|\lesssim |m|^{\frac{1}{2^{q+2}-2}}\log^{6}P_1\ \big(\s(P_1)\vp(P_1)\big)^{-\frac{2^{q+1}-2}{2^q(2^{q+2}-2)}}P_1^{1+\frac{2^q-1}{2^q(2^{q+2}-2)}}.
\end{align}
The bounds in \eqref{ineq1} and \eqref{ineq2} will be proved in the next two subsections.
\end{proof}
The proof of the inequalities \eqref{ineq1} and \eqref{ineq2} to a large extent will be based on the Van der Corput estimates.
\begin{lem}[Van der Corput \cite{VDC}, \cite{GK} or \cite{IK}]\label{vdc}
Suppose that $N\ge1$, $k\geq2$ is an integer and $a\le b \le a+N$. Let $F\in\mathcal{C}^k([a, b])$ be a real valued function such that
\begin{align*}
  \eta\lesssim |F^{(k)}(x)|\lesssim r\eta,\ \ \mbox{for every \ $x\in [a, b]$,}
\end{align*}
for some $\eta>0$ and $r\ge1$. Then
\begin{align*}
    \bigg|\sum_{a\le n\le b}\ep{F(n)}\bigg|\lesssim
    rN\left(\eta^{1/(2^k-2)}+N^{-2/2^k}+(N^k\eta)^{-2/2^k}\right),
\end{align*}
where the implied constant is absolute.
\end{lem}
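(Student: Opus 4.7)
The plan is to establish the lemma by induction on the integer $k\ge 2$, coupling the Weyl--van der Corput differencing inequality (the so--called $A$--process) with the classical second derivative test as the base case. Each application of the $A$--process trades one order of the derivative for a squaring of the exponential sum, so $k-2$ iterations reduce the hypothesis $|F^{(k)}|\simeq\eta$ to a first derivative condition on iterated differences of $F$.

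\smallskip

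\emph{Base case $k=2$.} Here $|F''|\simeq\eta$ forces $F'$ to be strictly monotone with total variation $\lesssim rN\eta$. I would partition $[a,b]$ into maximal sub--intervals on which $F'\pmod 1$ lies in a narrow dyadic band, apply the Kuzmin--Landau first derivative test (namely $|\sum_n e(F(n))|\lesssim \lambda^{-1}$ whenever $F'$ is monotone and $\|F'\|\ge \lambda$) on each band, and invoke the trivial bound $|S|\le N$ on those bands where $F'\pmod 1$ approaches an integer. Summing the contributions yields $|S|\lesssim rN\eta^{1/2}+r\eta^{-1/2}+rN^{1/2}$, which matches the claimed bound at $k=2$ since $1/(2^2-2)=1/2$ and $2/2^2=1/2$.

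\smallskip

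\emph{Inductive step.} Assume the lemma at level $k-1$. Given $F\in\mathcal{C}^k([a,b])$ with $|F^{(k)}|\in[\eta,r\eta]$, the $A$--process yields, for any integer $1\le H\le N$,
$$\biggl|\sum_{a\le n\le b} e(F(n))\biggr|^2 \lesssim \frac{N^2}{H}+\frac{N}{H}\sum_{h=1}^{H}\biggl|\sum_{n} e\bigl(F(n+h)-F(n)\bigr)\biggr|.$$
Setting $G_h(n)=F(n+h)-F(n)$, the mean value theorem gives $G_h^{(k-1)}(x)=hF^{(k)}(\xi)$ for some $\xi\in(x,x+h)$, so $|G_h^{(k-1)}|\in[h\eta,rh\eta]$ on an interval of length $\lesssim N$. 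Applying the inductive hypothesis to $G_h$ with parameters $(k-1,h\eta,r)$ bounds each inner sum by $rN\bigl((h\eta)^{1/(2^{k-1}-2)}+N^{-2/2^{k-1}}+(N^{k-1}h\eta)^{-2/2^{k-1}}\bigr)$. Summing over $h=1,\dots,H$ with the elementary estimate $\sum_{h\le H}h^{\a}\lesssim H^{1+\a}$ (valid for $\a>-1$, with the obvious variant otherwise), substituting back, and taking the square root reduces the estimate to a sum of four terms of the form (a power of $N,H,\eta$) times $r$, using $\sqrt{r}\le r$ since $r\ge 1$.

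\smallskip

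The main obstacle is the ensuing optimization in $H$. Balancing the dominant differencing contribution against the diagonal $N/\sqrt{H}$ term forces $H$ to be a specific power of $\eta$ and $N$, and plugging this back produces exactly the three summands $\eta^{1/(2^k-2)}$, $N^{-2/2^k}$, and $(N^k\eta)^{-2/2^k}$. The delicate point is verifying that the optimal $H$ lies in the legal range $1\le H\le N$ in every regime; when it falls outside this range one invokes the trivial bound $|S|\le N$ directly, and the second and third terms in the conclusion are what absorb the resulting loss. No new ideas beyond this are required, but the exponent arithmetic must be carried out carefully at each step of the induction, and the parameter $r$ must be tracked so that it enters the final bound to the first power only.
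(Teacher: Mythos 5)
The paper offers no proof of this lemma: it is quoted as a known result from the cited sources (van der Corput, Graham--Kolesnik, Iwaniec--Kowalski), and your plan --- induction on $k$ with the Kuzmin--Landau/second-derivative argument as base case and the Weyl--van der Corput $A$-process in the inductive step, passing from $|F^{(k)}|\asymp\eta$ to $|G_h^{(k-1)}|\asymp h\eta$ by the mean value theorem --- is precisely the standard proof found in those references (Graham--Kolesnik, Chapter~2). So the strategy is the right one, and the base case and the differencing step are set up correctly.

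There is, however, one step that would fail as you describe it: choosing $H$ by ``balancing the dominant differencing contribution against the diagonal term''. That balanced choice, $H\asymp\eta^{-1/(2^{k-1}-1)}$, equalizes the first two terms of the squared estimate at $N^2\eta^{2/(2^k-2)}$, but the inherited third term $N^2\bigl(N^{k-1}H\eta\bigr)^{-2/2^{k-1}}$ is then only bounded by a quantity of the shape $N^{2-2^{3-k}}\eta^{-2/(2^k-2)}$, i.e.\ you recover the weaker two-term bound $N\eta^{1/(2^k-2)}+N^{1-2^{2-k}}\eta^{-1/(2^k-2)}$ rather than the stated one. Concretely, for $k=3$ and $\eta=N^{-2}$ this route gives $|S|\ll N^{5/6}$, while the lemma asserts $N^{3/4}$; and this is not the ``optimal $H$ out of range'' scenario you propose to handle by the trivial bound, since here $H\asymp N^{2/3}\in[1,N]$. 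The repair stays entirely inside your framework but requires a regime-dependent choice of $H$: writing $M$ for the largest of the three target quantities $\eta^{1/(2^{k-1}-1)}$, $N^{-4/2^k}$, $(N^k\eta)^{-4/2^k}$, the squared estimate is $\ll N^2 M$ provided $H\ge M^{-1}$, $H\ge (N^{k-1}\eta)^{-1}M^{-2^{k-2}}$ and $H\le M^{2^{k-1}-2}\eta^{-1}$, and a short computation using $M\ge N^{-4/2^k}$ and $M\ge (N^k\eta)^{-4/2^k}$ shows this window is nonempty and meets $[1,N]$ (in the intermediate regime above it forces $H\asymp N$, not the balanced value); the middle term of the level-$(k-1)$ bound contributes only $N^{2-4/2^{k-1}}\le N^{2-4/2^k}$, which is harmless. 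With that adjustment, together with the trivial bound when $\eta\ge1$, your induction closes and yields exactly the three summands, with $r$ entering to the first power and constants depending on $k$ as expected.
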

With the aid of Lemma \ref{vdc} we will derive a very useful estimate in the next lemma.
\begin{lem}\label{vdclem2}
Let $W:\Z\mapsto\Z$ be a fixed polynomial of degree $q\in\N$. For every $m\in\Z\setminus\{0\}$, $l\in\N$, $j\ge0$ and $X\ge 1$ we have
\begin{align}\label{vdcest3}
  \bigg|\sum_{1\le k\le X}\ e^{2\pi i(\xi jW(kl)+m\vp(kl))}\bigg|\lesssim |m|^{1/(2^{q+1}-2)}\log(lX)\ lX\big(\s(lX)\vp(lX)\big)^{-1/2^q}.
\end{align}
If $c>1$ then $\s$ is constantly equal to $1$.
\end{lem}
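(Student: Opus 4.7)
\textbf{Proof plan for Lemma \ref{vdclem2}.} The plan is to apply the Van der Corput estimate of Lemma \ref{vdc} after a dyadic decomposition in $k$, taking advantage of the fact that the $(q+1)$-st derivative of the phase annihilates the polynomial $W$ and leaves only a controllable term involving $\vp^{(q+1)}$. Precisely, set
\begin{align*}
F(x)=\xi jW(lx)+m\vp(lx).
\end{align*}
Since $W$ has degree $q$, differentiation $q+1$ times gives $F^{(q+1)}(x)=ml^{q+1}\vp^{(q+1)}(lx)$. A repeated application of Lemma \ref{funlemfi} (together with \eqref{eq15} when $c>1$ and \eqref{eq16} when $c=1$) yields the two-sided control
\begin{align*}
\vp^{(q+1)}(y)\simeq\s(y)\vp(y)y^{-(q+1)},\qquad y\ge h(x_0),
\end{align*}
with the convention $\s\equiv1$ if $c>1$. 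Consequently $|F^{(q+1)}(x)|\simeq|m|\s(lx)\vp(lx)x^{-(q+1)}$.

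Next I would decompose $[1,X]$ into $O(\log X)$ dyadic subintervals $I_j=(N_j,2N_j]$ with $N_j=2^{j}$. On each $I_j$ the doubling properties $\s(2y)\simeq\s(y)$, $\vp(2y)\simeq\vp(y)$ from Lemma \ref{funlemfi} and \eqref{compfi} imply that $|F^{(q+1)}|$ is comparable to
\begin{align*}
\eta_j:=|m|\,\s(lN_j)\vp(lN_j)N_j^{-(q+1)},
\end{align*}
with a constant ratio $r=O(1)$. Apply Lemma \ref{vdc} with $k=q+1$ on $I_j$; the three resulting terms are
\begin{align*}
N_j\eta_j^{1/(2^{q+1}-2)}+N_j^{\,1-1/2^{q}}+N_j\bigl(N_j^{q+1}\eta_j\bigr)^{-1/2^{q}}.
\end{align*}
Since $N_j^{q+1}\eta_j=|m|\s(lN_j)\vp(lN_j)$, the third term simplifies to $N_j\bigl(|m|\s(lN_j)\vp(lN_j)\bigr)^{-1/2^{q}}$, which is the principal contribution.

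Now I would sum over $j$. The key observation is the comparison
\begin{align*}
N_j\bigl(\s(lN_j)\vp(lN_j)\bigr)^{-1/2^{q}}\lesssim X\bigl(\s(lX)\vp(lX)\bigr)^{-1/2^{q}}\qquad\text{for every }N_j\le X.
\end{align*}
Indeed, writing $t=X/N_j\ge1$, the growth estimate $\vp(ty)\lesssim t^{\g+\e}\vp(y)\lesssim t\,\vp(y)$ from \eqref{ratefi} together with $\s$ being non-increasing (or constant) gives $\s(lX)\vp(lX)\lesssim t\,\s(lN_j)\vp(lN_j)$, hence the ratio $(1/t)\,t^{1/2^{q}}=t^{1/2^{q}-1}\le1$. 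This controls the dominant term, while the remaining VdC contributions $N_j\eta_j^{1/(2^{q+1}-2)}$ and $N_j^{1-1/2^{q}}$ are absorbed into the target by the same dyadic summation together with \eqref{slowhfi}--\eqref{ratefi}. Summing over the $O(\log(lX))$ dyadic pieces gives
\begin{align*}
\Bigl|\sum_{1\le k\le X}e^{2\pi iF(k)}\Bigr|\lesssim\log(lX)\cdot X\bigl(|m|\s(lX)\vp(lX)\bigr)^{-1/2^{q}}.
\end{align*}
Finally, since $l\ge1$ and $|m|\ge1$, the inequalities $X\le lX$ and $|m|^{-1/2^{q}}\le|m|^{1/(2^{q+1}-2)}$ convert the above bound into the form stated in \eqref{vdcest3}.

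\textbf{Main obstacle.} The derivative information is clean, and the VdC estimate is black-boxed. The only genuine work is the dyadic aggregation: one must verify that each dyadic block's contribution is dominated by the $N_j=X$ term, which hinges on the subtle interaction between the decrease of $\s$ (in the $c=1$ case) and the sub-linear growth of $\vp$ — controlled quantitatively by \eqref{slowhfi}, \eqref{ratefi}, and the doubling statements of Lemma \ref{funlemfi}. Handling the $c=1$ case, where $\s$ cannot be replaced by a constant and enters both the lower bound on $|F^{(q+1)}|$ and the final estimate, is the most delicate part.
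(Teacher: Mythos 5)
Your plan is essentially the paper's own proof: dyadic decomposition of $[1,X]$, the observation that $F^{(q+1)}(x)=ml^{q+1}\vp^{(q+1)}(lx)$ with $|\vp^{(q+1)}(y)|\simeq\s(y)\vp(y)y^{-(q+1)}$ from Lemma \ref{funlemfi}, an application of Lemma \ref{vdc} with $k=q+1$ on each block, and then domination of every block by the top one via the monotonicity of $x\mapsto x\big(\s(x)\vp(x)\big)^{-1/2^q}$ (your $t=X/N_j$ comparison is just this fact up to constants). The only slip is the intermediate display $\lesssim\log(lX)\,X\big(|m|\s(lX)\vp(lX)\big)^{-1/2^q}$, which cannot absorb the first Van der Corput term $N_j\eta_j^{1/(2^{q+1}-2)}$ (that term grows like $|m|^{1/(2^{q+1}-2)}$), but this is immaterial since that term is bounded directly by the right-hand side of \eqref{vdcest3} using $\vp(x)\lesssim x$, exactly as in the paper.
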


\begin{proof}
Let $U_{j, l}(X)$ denote the sum in \eqref{vdcest3}, and  split  $U_{j, l}(X)$ into $\log X$ dyadic pieces of the form $\sum_{Y<k\le Y'\le 2Y}e^{2\pi i(\xi jW(kl)+m\vp(kl))}$, where $Y\in[1, X]$. One can assume, that $m>0$ and let $F(t)=\xi jW(lt)+m\vp(lt)$ for $t\in[Y, 2Y]$. According to Lemma \ref{funlemfi} we know that $\frac{x^n\vp^{(n)}(x)}{\s(x)\vp(x)}\simeq C_{\vp, n}\not=0$ for every $n\in\N$, (if $c>1$ one can think that $\s$ is constantly equal to $1$). Thus
$$|F^{(q+1)}(t)|=|ml^{q+1}\vp^{(q+1)}(lt)|\simeq \frac{ml^{q+1}\s(lY)\vp(lY)}{(lY)^{q+1}},$$
and consequently by Lemma \ref{vdc}
we obtain
\begin{multline*}
 \bigg|\sum_{Y<k\le Y'\le 2Y}e^{2\pi i(\xi jW(kl)+m\vp(kl))}\bigg| \\
\lesssim Y\left( \left(\frac{ml^{q+1}\s(lY)\vp(lY)}{(lY)^{q+1}}\right)^{1/(2^{q+1}-2)}+
Y^{-1/2^q}+\left(Y^{q+1}\frac{ml^{q+1}\vp(lY)\s(lY)}{(lY)^{q+1}}\right)^{-1/2^q}\right)\\
\lesssim (ml)^{1/(2^{q+1}-2)}Y^{1-q/(2^{q+1}-2)}+
Y^{1-1/2^q}+Y\left(\frac{1}{\s(lY)\vp(lY)}\right)^{1/2^q}\\
 \lesssim m^{1/(2^{q+1}-2)}lY\big(\s(lY)\vp(lY)\big)^{-1/2^q}.
\end{multline*}
Finally we obtain that
\begin{align*}
   |U_{j, l}(X)|&\lesssim\log X\sup_{Y\in[1, X]}m^{1/(2^{q+1}-2)}lY\big(\s(lY)\vp(lY)\big)^{-1/2^q}\\
&\lesssim m^{1/(2^{q+1}-2)}\log(lX)lX\big(\s(lX)\vp(lX)\big)^{-1/2^q},
 \end{align*}
 since $x\mapsto x\big(\s(x)\vp(x)\big)^{-1/2^q}$ is increasing. The proof of Lemma \ref{vdclem2} follows.
\end{proof}

In the sequel, we will use the following version  of summation by parts.
\begin{lem}[see \cite{Nat}]\label{sbp}
Let $0\le a<b$ be real numbers and $g(n)$ and $u(n)$   be arithmetic functions such that $g\in\mathcal{C}^1([a, b])$ and $U(t)=\sum_{a< n\le t}u(n)$. Then
$$\sum_{a<n\le b}u(n)g(n)=U(b)g(b)-\int_a^bU(t)g'(t)dt.$$
\end{lem}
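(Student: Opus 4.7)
The plan is to prove this standard Abel summation identity by reducing the left-hand side to a telescoping sum and then recognizing that the resulting telescoping differences are precisely the integral of the step function $U(t)$ against $g'(t)$. First I would enumerate the integers in $(a,b]$ as $n_1<n_2<\cdots<n_k$, set the convention $U(n_0):=0$ (consistent with $U(t)=0$ on $[a,n_1)$), and write $u(n_j)=U(n_j)-U(n_{j-1})$, which follows directly from the definition of $U(t)$ and the fact that $U$ is constant on $[n_{j-1},n_j)$.

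Next I would substitute and perform the discrete analogue of integration by parts. Plugging the above into the sum gives
$$\sum_{a<n\le b}u(n)g(n)=\sum_{j=1}^k\bigl(U(n_j)-U(n_{j-1})\bigr)g(n_j),$$
and after shifting the index in the second piece (and using $U(n_0)=0$) this rearranges to
$$U(n_k)g(n_k)-\sum_{j=1}^{k-1}U(n_j)\bigl(g(n_{j+1})-g(n_j)\bigr).$$

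Then I would evaluate the right-hand integral. Since $U(t)$ is a step function that equals $U(n_j)$ on $[n_j,n_{j+1})$ for $1\le j\le k-1$ and equals $U(n_k)=U(b)$ on $[n_k,b]$, while $U(t)=0$ on $[a,n_1)$, the fundamental theorem of calculus applied piecewise gives
$$\int_a^b U(t)g'(t)\,dt=\sum_{j=1}^{k-1}U(n_j)\bigl(g(n_{j+1})-g(n_j)\bigr)+U(n_k)\bigl(g(b)-g(n_k)\bigr).$$
Subtracting this from $U(b)g(b)=U(n_k)g(b)$ causes the boundary contribution $U(n_k)g(b)$ to cancel, leaving $U(n_k)g(n_k)-\sum_{j=1}^{k-1}U(n_j)(g(n_{j+1})-g(n_j))$, which by the previous paragraph equals $\sum_{a<n\le b}u(n)g(n)$.

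There is essentially no obstacle here beyond careful bookkeeping of the boundary pieces $[a,n_1)$ and $[n_k,b]$; the former contributes nothing because $U$ vanishes there, and the latter is precisely what cancels with $U(b)g(b)$. The only mild hypothesis we use is $g\in\mathcal{C}^1([a,b])$, which is exactly what justifies the piecewise application of the fundamental theorem of calculus.
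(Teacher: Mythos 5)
Your proof is correct: the telescoping identity, the index shift using $U(n_0)=0$, and the piecewise evaluation of $\int_a^b U(t)g'(t)\,dt$ (zero on $[a,n_1)$, constant on each $[n_j,n_{j+1})$ and on $[n_k,b]$) are all handled properly, and the boundary term $U(n_k)g(b)$ cancels exactly as you say. The paper itself gives no proof, citing Nathanson instead, and your argument is precisely the standard partial-summation proof that citation refers to, so there is nothing to compare beyond noting the (harmless) degenerate case where $(a,b]$ contains no integers, in which both sides vanish.
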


\subsection{The estimates for $S_1$ and $S_{21}$}
Let us define $U_l(x)=\sum_{P/l< k\le x}\ e^{2\pi i(\xi W(lk)+m\vp(lk))}$ and recall that $v=P_1^{\frac{2^{q+1}-2}{2^{2q+1}+2^q-2}}$. Applying summation by parts to the inner sum in $S_1$ we see that
\begin{align*}
  S_1=\sum_{l\le v}\mu(l)\sum_{P/l<k\le P_1/l}\log k e^{2\pi i(\xi W(kl)+m\vp(kl))}
  =\sum_{l\le v}\mu(l)\bigg(U_l(P_1/l)\log(P_1/l)-\int_{P/l}^{P_1/l}U_l(x)\frac{dx}{x}\bigg).
\end{align*}
This implies
\begin{align*}
  |S_1|\le\log P_1\ \sum_{l\le v}\sup_{P/l\le x\le P_1/l}|U_l(x)|.
\end{align*}
Moreover,
\begin{align*}
  |S_{21}|\le\sum_{l\le v}|\Pi_v(l)||U_l(P_1/l)|
 \lesssim \log P_1\ \sum_{l\le v}|U_l(P_1/l)|,
\end{align*}
since $|\Pi_v(l)|\le\sum_{k|l}\Lambda(k)=\log l$. Thus Lemma \ref{vdclem2} applied to $U_l(x)$ implies that
\begin{align*}
  |S_1|,\ |S_{21}| &\le\log P_1\ \sum_{l\le v}\sup_{P/l\le x\le P_1/l}|U_l(x)|\\
&\lesssim
\log P_1\ \sum_{l\le v}\sup_{P/l\le x\le P_1/l}|m|^{\frac{1}{2^{q+1}-2}}\log(lx)lx\big(\s(lx)\vp(lx)\big)^{-\frac{1}{2^q}}\\
&\lesssim
|m|^{\frac{1}{2^{q+1}-2}}\log^2 P_1\ \big(\s(P_1)\vp(P_1)\big)^{-\frac{1}{2^q}}P_1^{1+\frac{2^{q+1}-2}{2^{2q+1}+2^q-2}},
\end{align*}
since $x\mapsto x\big(\s(x)\vp(x)\big)^{-1/2^q}$ is increasing and the proof of \eqref{ineq1} is completed.
\subsection{The estimates for $S_{22}$ and $S_3$}
Here we shall bound $S_{22}$ and $S_3$.  For $S_{22}$, we have
\begin{align}\label{s22}
  |S_{22}|&=\bigg|\sum_{v<l\le v^2}\sum_{P/l<k\le P_1/l}\Pi_{v}(l)e^{2\pi i(\xi W(kl)+m\vp(kl))}\bigg|\\
  \nonumber&\lesssim\log^2P_1\sup_{L\in[v, v^2]}\sup_{K\in[P/v^2, P_1/v]}\bigg|\sum_{L<l\le L'\le 2L}
  \sum_{\genfrac{}{}{0pt}{}{K<k\le K'\le 2K}{P<kl\le P_1}}\Pi_{v}(l)e^{2\pi i(\xi W(kl)+m\vp(kl))}\bigg|,
\end{align}
and for $S_3$, we have
\begin{align}\label{s3}
  |S_3|&=\bigg|\sum_{v<l\le P_1/v}\sum_{\genfrac{}{}{0pt}{}{P/l< k\le P_1/l}{k>v}}\Lambda(k)\Xi_v(l)e^{2\pi i(\xi W(kl)+m\vp(kl))}\bigg|\\
 \nonumber &\lesssim\log^2P_1\sup_{L\in[v, P_1/v]}\sup_{K\in[v, P_1/v]}\bigg|
  \sum_{L<l\le L'\le 2L}\sum_{\genfrac{}{}{0pt}{}{K<k\le K'\le2K}{P< kl\le P_1}}\Lambda(k)\Xi_v(l)e^{2\pi i(\xi W(kl)+m\vp(kl))}\bigg|.
\end{align}
In view of these decompositions it remains to show.
\begin{lem}\label{billem}
Let $K, L\in\N$, $m\in\Z\setminus\{0\}$. Assume that $\vp(KL)\le \min\{K, L\}^{\frac{2^{2q+1}+2^q-2}{2^{q+1}-2}}$ and $|m|\min\{K, L\}^{\frac{2^{q+1}-2}{2^q}}\le \big(\s(KL)\vp(KL)\big)^{\frac{2^{q+1}-2}{2^q}}$. Then
 \begin{align}\label{billem1}
  \bigg|\sum_{L<l\le L'\le 2L}\sum_{\genfrac{}{}{0pt}{}{K<k\le K'\le2K}{P< kl\le P_1}}\Delta_1(l)\Delta_2(k)&e^{2\pi i(\xi W(kl)+m\vp(kl))}\bigg|\\
\nonumber\lesssim |m|^{\frac{1}{2^{q+2}-2}}\ \log^{2}L\ \log^{2}K\ &\big(\s(KL)\vp(KL)\big)^{-\frac{2^{q+1}-2}{2^q(2^{q+2}-2)}}
\ \min\{K, L\}^{\frac{2^{q+1}-2}{2^q(2^{q+2}-2)}}\ KL,
\end{align}
for every sequences of complex numbers $(\Delta_1(l))_{l\in(L, 2L]}$, and  $(\Delta_2(k))_{k\in(K, 2K]}$ such that
\begin{align}\label{logineq}
 \sum_{L<l\le 2L}|\Delta_1(l)|^2\lesssim L\log^{3}L,\ \ \ \mbox{and}\ \ \ \sum_{K<k\le 2K}|\Delta_2(k)|^2\lesssim K\log^{3}K.
\end{align}
\end{lem}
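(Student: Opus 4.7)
The plan is to invoke the standard bilinear paradigm: Cauchy--Schwarz to separate the two variables, then Van der Corput on the resulting one-variable inner exponential sum. Without loss of generality I shall assume $L\le K$, so that $\min\{K,L\}=L$; the reverse case is handled symmetrically by interchanging the roles of $k$ and $l$ throughout.

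First I will Cauchy--Schwarz in the shorter variable $l$, using the hypothesis $\sum_l|\Delta_1(l)|^2\lesssim L\log^3 L$ from \eqref{logineq}, to reduce \eqref{billem1} to controlling the symmetric bilinear expression
\[
\sum_{k_1,k_2\in(K,2K]}\Delta_2(k_1)\overline{\Delta_2(k_2)}\sum_{l}e^{2\pi i[F(k_1 l)-F(k_2 l)]},\qquad F(x):=\xi W(x)+m\vp(x).
\]
The diagonal $k_1=k_2$ contributes at most $L^2K\log^6$, which is well within the claimed bound \eqref{billem1}. For the off-diagonal part I will reparametrise by $k_1=k$, $k_2=k-h$ with $h\ne 0$, use $|\Delta_2(k)\Delta_2(k-h)|\le\tfrac12(|\Delta_2(k)|^2+|\Delta_2(k-h)|^2)$, and interchange summation; combined with \eqref{logineq} this reduces matters to estimating
\[
\sum_{1\le h\le K}\max_{k}|I_h(k)|,\qquad I_h(k):=\sum_l e^{2\pi i G_h(k,l)},\qquad G_h(k,l):=F(kl)-F((k-h)l).
\]

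The heart of the argument is a Van der Corput estimate, via Lemma \ref{vdc}, applied to $I_h(k)$ with the $(q+2)$-nd derivative in $l$. The crucial observation is that $\deg W=q$ implies $W^{(q+2)}\equiv 0$, so the polynomial part of $G_h$ drops out and
\[
\partial_l^{q+2}G_h(k,l)=\frac{m}{l^{q+2}}\bigl[\eta_2(kl)-\eta_2((k-h)l)\bigr],\qquad \eta_2(x):=x^{q+2}\vp^{(q+2)}(x).
\]
Iterating Lemma \ref{funlemfi} gives $x^n\vp^{(n)}(x)\simeq\s(x)\vp(x)$ for every $n\ge 2$ (with $\s\equiv 1$ if $c>1$), and a further application yields $\eta_2'(x)\simeq\s(x)\vp(x)/x$. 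Combining this with the mean value theorem, I obtain the two-sided estimate $|\partial_l^{q+2}G_h(k,l)|\simeq|m|h\,\s(KL)\vp(KL)/(KL^{q+2})$, uniformly for $k\asymp K$ and $l\asymp L$, so Lemma \ref{vdc} bounds $|I_h(k)|$ by $L$ times the sum of the three standard Van der Corput terms.

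To conclude, I will sum each of the three Van der Corput contributions over $h\in\{1,\dots,K\}$ using $\sum_{h\le K}h^{\pm a}\lesssim K^{1\pm a}$ for $|a|<1$, multiply by the $K\log^3 K$ arising from summing $|\Delta_2|^2$, and take a square root. The two size hypotheses of the lemma --- $\vp(KL)\le\min\{K,L\}^{(2^{2q+1}+2^q-2)/(2^{q+1}-2)}$ and $|m|\min\{K,L\}^{(2^{q+1}-2)/2^q}\le(\s(KL)\vp(KL))^{(2^{q+1}-2)/2^q}$ --- are calibrated precisely so that the resulting combination of the three Van der Corput terms collapses to the right-hand side of \eqref{billem1}. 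The main obstacle I foresee is purely technical: rigorously deriving the uniform asymptotic $\eta_2'(x)\simeq\s(x)\vp(x)/x$ in the boundary regime $c=1$, where $\s$ originates in the anomalous form of $\vp''$ in Lemma \ref{funlemfi} and must be propagated through every higher derivative by induction before Van der Corput can be applied uniformly across both the $c=1$ and $c>1$ cases.
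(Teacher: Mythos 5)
Your reduction (Cauchy--Schwarz to strip out one set of coefficients, then Van der Corput on the differenced phase, whose polynomial part dies because $\deg W=q$) is the right general paradigm, and your derivative computation is sound: writing $x\vp^{(n+1)}(x)=\vp^{(n)}(x)(\b_{n+1}+\te_{n+1}(x))$ shows the two terms in $\eta_2'$ combine with coefficient $\g+o(1)\neq0$, so the mean value theorem does give a two-sided bound of the stated size, uniformly on dyadic blocks (this is exactly how the paper handles its own differenced derivative, one order lower). The gap is structural: after expanding the Cauchy--Schwarz square you sum the Van der Corput bound over \emph{all} shifts $1\le h\le K$. That is the Weyl--van der Corput inequality with the shift parameter forced to equal the full range, and it does not give \eqref{billem1}. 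Quantitatively, the main Van der Corput term contributes to $|S|^2$ about
\begin{align*}
L\log^3L\cdot K\log^3K\cdot L\sum_{h\le K}\Big(\tfrac{|m|h\,\s(KL)\vp(KL)}{KL^{q+2}}\Big)^{\frac{1}{2^{q+2}-2}}
\simeq (KL)^2\log^6\Big(\tfrac{|m|K^{2^{q+2}-3}\s(KL)\vp(KL)}{L^{q+2}}\Big)^{\frac{1}{2^{q+2}-2}},
\end{align*}
which carries $\s(KL)\vp(KL)$ to a \emph{positive} power and an extra large power of the long variable; already for $q=1$, $K\simeq L\simeq P^{1/2}$, $m=1$ this exceeds the right-hand side of \eqref{billem1} by a factor $\big(\s(KL)\vp(KL)\big)^{1/4}\gtrsim P^{\g/4-\e}$, and is in fact worse than the trivial bound $KL$. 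No calibration of the two hypotheses can rescue this, because with the full expansion there is no free parameter left to calibrate.

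The missing ingredient is the truncated, weighted shift average: for $U=K$ and any $1\le R\lesssim K$,
$\big|\sum_{u\in I}z_u\big|^2\le \frac{U+R}{R}\sum_{|r|<R}\big(1-\frac{|r|}{R}\big)\sum_{u,u+r\in I}z_u\overline{z}_{u+r}$ (Weyl--van der Corput). One performs Cauchy--Schwarz in the \emph{longer} variable (so the shifts run over the shorter one, and the inner exponential sum to which Lemma \ref{vdc} is applied is the long sum, at derivative order $q+1$, not $q+2$), obtaining $|S|^2\lesssim\log^6\big(\frac{L^2K^2}{R}+K^2L\,(|m|R)^{1/(2^{q+1}-2)}KL(\s\vp)^{-1/2^q}K^{1/2^q-1}\big)$ with $K=\min\{K,L\}$, and then chooses $R\simeq |m|^{-\frac{1}{2^{q+1}-1}}K^{-\frac{2^{q+1}-2}{2^q(2^{q+1}-1)}}\big(\s(KL)\vp(KL)\big)^{\frac{2^{q+1}-2}{2^q(2^{q+1}-1)}}$ to balance the diagonal loss $K/R$ against the off-diagonal gain. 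The exponent $|m|^{1/(2^{q+2}-2)}$ in \eqref{billem1} comes from the square root after this optimization, not from a $(q+2)$-nd derivative; and the two size hypotheses of the lemma are used precisely to verify $1\le R\lesssim\min\{K,L\}$ for this choice. Without introducing $R$ (and putting the shifts in the short variable so that $R\lesssim\min\{K,L\}$ is the relevant constraint), the argument as proposed cannot reach the stated bound.
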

Assuming Lemma \ref{billem} we would have the bounds for $S_{22}$ and $S_3$ as in \eqref{ineq2}. Indeed, recall that $M=P^{1+\chi+\e}\vp(P)^{-1}$ with $\chi>0$  such that $(2^{2q+2}+2^q-2)(1-\g)+2^q(2^{q+3}-2)\chi<1$ and $0<\e<\frac{\chi}{100(2^{q+2}-1)}$.  Observe that
$$v=P_1^{\frac{2^{q+1}-2}{2^{2q+1}+2^q-2}}\le P_1^{1-\frac{2^{q+2}-4}{2^{2q+1}+2^q-2}}=P_1/v^2,$$
and consequently $v\le P_1^{1/2}$ and $v^2\le P_1/v$.
Thus in both cases $K, L\in[v, P_1/v]$.
Therefore, $v\le \min\{K, L\}\le P_1^{1/2}$ since $KL\simeq P_1$, and for sufficiently large $P_1\simeq P$, we see that $\vp(KL)\le \min\{K, L\}^{\frac{2^{2q+1}+2^q-2}{2^{q+1}-2}}$. If not, then $$\min\{K, L\}^{\frac{2^{2q+1}+2^q-2}{2^{q+1}-2}}<\vp(KL)\le \vp(P_1)\le P_1,$$ hence $\min\{K, L\}< P_1^{\frac{2^{q+1}-2}{2^{2q+1}+2^q-2}}=v$ contrary to what we have just shown. Finally, it remains to verify that $|m|\min\{K, L\}^{\frac{2^{q+1}-2}{2^q}}\le \big(\s(KL)\vp(KL)\big)^{\frac{2^{q+1}-2}{2^q}}$. Indeed, $2-1/2^q+\chi+4\e -\g(3-2/2^q)\le1/2^q((3\cdot2^q-2)(1-\g)-2^q+1+5\cdot2^q\chi)\le 1/2^q((2^{2q+2}+2^q-2)(1-\g)+2^q(2^{q+3}-2)\chi-1)<0$, thus \begin{multline*}
 |m|\min\{K, L\}^{\frac{2^{q+1}-2}{2^q}}\le MP_1^{1-1/2^q}\\
 =P_1^{2-1/2^q+\chi+\e}\vp(P_1)^{-1}\big(\s(KL)\vp(KL)\big)^{-(2-2/2^q)}
\big(\s(KL)\vp(KL)\big)^{\frac{2^{q+1}-2}{2^q}}\\
\lesssim P_1^{2-1/2^q+\chi+4\e -\g(3-2/2^q)}\cdot\big(\s(KL)\vp(KL)\big)^{\frac{2^{q+1}-2}{2^q}}\le \big(\s(KL)\vp(KL)\big)^{\frac{2^{q+1}-2}{2^q}}.
\end{multline*}
Therefore, \eqref{billem1} yields
\begin{multline*}
\bigg|\sum_{L<l\le L'\le 2L}\sum_{\genfrac{}{}{0pt}{}{K<k\le K'\le2K}{P< kl\le P_1}}\Delta_1(l)\Delta_2(k)e^{2\pi i(\xi W(kl)+m\vp(kl))}\bigg|\\
  \lesssim |m|^{\frac{1}{2^{q+2}-2}}\ \log^{2}L\ \log^{2}K\ \big(\s(KL)\vp(KL)\big)^{-\frac{2^{q+1}-2}{2^q(2^{q+2}-2)}}
\ \min\{K, L\}^{\frac{2^{q+1}-2}{2^q(2^{q+2}-2)}}\ KL\\
\lesssim |m|^{\frac{1}{2^{q+2}-2}}\ \log^{4}P_1\ \big(P_1^{1/2}\big)^{\frac{2^{q+1}-2}{2^q(2^{q+2}-2)}}\ P_1\ \big(\s(P_1)\vp(P_1)\big)^{-\frac{2^{q+1}-2}{2^q(2^{q+2}-2)}}\\
\lesssim |m|^{\frac{1}{2^{q+2}-2}}\log^{4}P_1\ \big(\s(P_1)\vp(P_1)\big)^{-\frac{2^{q+1}-2}{2^q(2^{q+2}-2)}}P_1^{1+\frac{2^q-1}{2^q(2^{q+2}-2)}}.
   \end{multline*}
This in turn completes the proof of the estimates \eqref{ineq2}, since after appropriate choice of $\Delta_1(l)$ and $\Delta_2(k)$ in $S_{22}$ and $S_3$, \eqref{aritmfun} shows that \eqref{logineq} is satisfied, we get
\begin{align*}
  |S_{22}|,\ |S_3|\lesssim |m|^{\frac{1}{2^{q+2}-2}}\log^{6}P_1\ \big(\s(P_1)\vp(P_1)\big)^{-\frac{2^{q+1}-2}{2^q(2^{q+2}-2)}}P_1^{1+\frac{2^q-1}{2^q(2^{q+2}-2)}},
\end{align*}
where the additional $\log^2P_1$ factor comes form the dyadic decompositions \eqref{s22} and \eqref{s3}.

\begin{proof}[Proof of Lemma \ref{billem}]
In view of the symmetry between the variables $k, l$ in the sums in \eqref{billem1} one can assume that $K\le L$.
We will divide the proof into three steps and we are going to follow the concepts of Heath--Brown from \cite{HB} Section 5, see also \cite{GK} Section 4.\\

\noindent {\textsf{\textbf{\underline{Step 1.}}}} Let us define
 \begin{align*}
   E_r=\sum_{L<l\le2L}\sum_{\genfrac{}{}{0pt}{}{K<k, k+r\le K'\le 2K}{P< kl, (k+r)l\le P_1}}\Delta_2(k)\overline{\Delta_2(k+r)}e^{2\pi i (\xi W(kl)+m\vp(kl) - \xi W((k+r)l)-m\vp((k+r)l))},
 \end{align*}
 for every $r\in\Z$. If $r=0$ we see, by \eqref{logineq}, that
 \begin{align}\label{eineq0}
   |E_0|\le\sum_{L<l\le2L}\sum_{K<k\le K'\le2K}|\Delta_2(k)|^2\lesssim L\sum_{K/2<k\le 2K}|\Delta_2(k)|^2\lesssim LK\log^3K.
 \end{align}
 Moreover, setting
 \begin{align*}
  \widetilde{S}(k, r)=\sum_{\max\{L, \frac{P}{k}, \frac{P}{k+r}\}<l\le\min\{2L, \frac{P_1}{k}, \frac{P_1}{k+r}\}}e^{2\pi i (\xi W(kl)+m\vp(kl) - \xi W((k+r)l)-m\vp((k+r)l))},
\end{align*}
 we see that for any $r\in\Z\setminus\{0\}$ we have
  \begin{align*}
   E_r=\sum_{\max\{K, K-r\}<k\le \min\{K', K'-r\}}\Delta_2(k)\overline{\Delta_2(k+r)}\widetilde{S}(k, r).
 \end{align*}
One can see that for every $R\ge1$ we have
\begin{multline}\label{eineq}
   \sum_{1\le|r|\le R}|E_r|\lesssim \sum_{1\le|r|\le R}\sum_{K<k, k+r\le K'}|\Delta_2(k)|^2|\widetilde{S}(k, r)|+|\overline{\Delta_2(k+r)}|^2|\widetilde{S}(k+r, -r)|\\
  \le\sum_{1\le|r|\le R}\sum_{K<k, k+r\le K'}|\Delta_2(k)|^2|\widetilde{S}(k, r)|
   +\sum_{1\le|r|\le R}\sum_{K<k, k-r\le K'}|\Delta_2(k)|^2|\widetilde{S}(k, -r)|\\
   \lesssim\sum_{1\le|r|\le R}\sum_{K<k, k+r\le K'}|\Delta_2(k)|^2|\widetilde{S}(k, r)|
   =\sum_{K<k\le K'}|\Delta_2(k)|^2\sum_{1\le|r|\le R}|\widetilde{S}(k, r)|
   \mathbf{1}_{(K, K']}(k+r),
 \end{multline}
since $|\widetilde{S}(k, r)|=|\widetilde{S}(k+r, -r)|$.\\

 \noindent {\textsf{\textbf{\underline{Step 2.}}}} We will prove that for every $m\in\N$, $k\in(K, 2K]$ and $R\ge1$ we have
  \begin{align}\label{vdcest4}
    \frac{1}{R}\sum_{1\le |r|<R}|\widetilde{S}(k, r)|\mathbf{1}_{(K, 2K]}(k+r)\lesssim (mR)^{1/(2^{q+1}-2)}KL\big(\s(KL)\vp(KL)\big)^{-1/2^q}K^{1/2^q-1}.
  \end{align}
For this purpose define $F(x)=\xi W(kx)+m\vp(kx) - \xi W((k+r)x)-m\vp((k+r)x))$, for $x\in(L, 2L]$. Then, according to Lemma \ref{funlemfi} and the mean value theorem,  for some $\eta\in(0, 1)$ and $\eta_{k, r}=k+\eta r$ if $r>0$ and $\eta_{k, r}=k+r-\eta r$ if $r<0$, we have
\begin{multline*}
 |F^{(q+1)}(x)|=|mk^{q+1}\vp^{(q+1)}(kx)-m(k+r)^{q+1}\vp^{(q+1)}((k+r)x))|\\
= \big|r\big((q+1)m\eta_{k, r}^q\vp^{(q+1)}(x\eta_{k, r})+m\eta_{k, r}^{q+1}x\vp^{(q+2)}(x\eta_{k, r})\big)\big|\\
= |rm\eta_{k, r}^q\vp^{(q+1)}(x\eta_{k, r})(q+1+\b_{q+2}+\theta_{q+2}(x\eta_{k, r}))|\\
\simeq |mrK^q\vp^{(q+1)}(KL)|\simeq \frac{m|r|K^q\s(KL)\vp(KL)}{(KL)^{q+1}},
\end{multline*}
since $k, k+r\in(K, 2K]$ and $\eta_{k, r}\in(K, 2K]$. Therefore, by Lemma \ref{vdc} we obtain
\begin{multline*}
  |\widetilde{S}(k, r)|
  \lesssim L\left(\left(\frac{m|r|K^q\s(KL)\vp(KL)}{(KL)^{q+1}}\right)^{1/(2^{q+1}-2)}+L^{-1/2^q}+
\left(L^{q+1}\frac{m|r|K^q\s(KL)\vp(KL)}{(KL)^{q+1}}\right)^{-1/2^q}\right)\\
  \lesssim (m|r|)^{1/(2^{q+1}-2)}L^{1-q/(2^{q+1}-2)}+L^{1-1/2^q}
+LK^{1/2^q}\left(\frac{1}{\s(KL)\vp(KL)}\right)^{1/2^q}\\
\lesssim (m|r|)^{1/(2^{q+1}-2)}L^{1-1/2^q}+KL\left(\frac{1}{\s(KL)\vp(KL)}\right)^{1/2^q}K^{1/2^q-1}\\
  \lesssim (m|r|)^{1/(2^{q+1}-2)}KL\big(\s(KL)\vp(KL)\big)^{-1/2^q}K^{1/2^q-1},
\end{multline*}
and \eqref{vdcest4} follows (if $c>1$, as before, one can think that $\s$ is constantly equal to $1$). Combining \eqref{eineq} with \eqref{vdcest4} we obtain that
\begin{align}\label{eineq1}
  \frac{1}{R}\sum_{1\le|r|\le R}|E_r|&\lesssim\sum_{K<k\le K'}|\Delta_2(k)|^2\frac{1}{R}\sum_{1\le|r|\le R}|\widetilde{S}(k, r)|\mathbf{1}_{(K, K']}(k+r)\\
  \nonumber&\lesssim K\log^3K\cdot (mR)^{1/(2^{q+1}-2)}KL\big(\s(KL)\vp(KL)\big)^{-1/2^q}K^{1/2^q-1}.
\end{align}

\noindent {\textsf{\textbf{\underline{Step 3.}}}} Now we can finish our proof. We shall apply Weyl--Van der Corput shift inequality (see \cite{HB} Lemma 5, page 258), which asserts that for a fixed $U\ge1$, any complex number $z_u\in\C$ with $U<u\le 2U$ and any interval $I\subseteq(U, 2U]$ we have for every $R\in\N$ that
$$\bigg|\sum_{u\in I}z_u\bigg|^2\le \frac{U+R}{R}\sum_{|r|<R}\left(1-\frac{|r|}{R}\right)\sum_{u, u+r\in I}z_u\overline{z}_{u+r}.$$
By the Cauchy--Schwartz inequality and Weyl--Van der Corput shift inequality, applied with $U=K$ and an integer $1\le R\lesssim K$ which will be adjusted later, we see that

\begin{multline}\label{eineq2}
   \bigg|\sum_{L<l\le2L}\sum_{\genfrac{}{}{0pt}{}{K<k\le K'\le 2K}{P< kl\le P_1}}\Delta_1(l)\Delta_2(k)e^{2\pi i (\xi W(kl)+m\vp(kl)}\bigg|^2\\
   \le\bigg(\sum_{L<l\le2L}|\Delta_1(l)|^2\bigg) \sum_{L<l\le L'\le 2L}\bigg|\sum_{\genfrac{}{}{0pt}{}{K<k \le K'\le 2K}{P< kl\le P_1}}\Delta_2(k)e^{2\pi i (\xi W(kl)+m\vp(kl)}\bigg|^2\\
   \lesssim L\log^{3}L \sum_{L<l\le2L}\bigg|\sum_{\genfrac{}{}{0pt}{}{K<k\le K'\le 2K}{P< kl\le P_1}}\Delta_2(k)e^{2\pi i (\xi W(kl)+m\vp(kl)}\bigg|^2\\
   \lesssim L\log^{3}L\ \frac{K+R}{R}\sum_{|r|\le R}\left(1-\frac{|r|}{R}\right)|E_r|\\
   \lesssim L^2K\log^{3}L\log^{3}K\ \frac{K+R}{R}+L\log^{3}L\ \frac{K+R}{R}\sum_{1\le|r|\le R}|E_r|\\
   \lesssim \log^{3}L\log^{3}K\left(\frac{L^2K^2}{R}+K^2L (mR)^{1/(2^{q+1}-2)}KL\big(\s(KL)\vp(KL)\big)^{-1/2^q}K^{1/2^q-1}\right).
 \end{multline}
We have used the estimate \eqref{eineq0} for $|E_0|$ and the inequality \eqref{eineq1}. Now let us define $R=\big\lceil m^{-a}K^{-b}L^c\big(\s(KL)\vp(KL)\big)^{-d}\big\rceil$ for some $a, b, c, d\in\R$ and oberve that the last expression in \eqref{eineq2} is bounded by
\begin{multline*}
\log^{3}L\log^{3}K\Big(m^aK^{2+b}L^{2-c}\big(\s(KL)\vp(KL)\big)^{d}\\
+ m^{\frac{1-a}{2^{q+1}-2}}K^{2+\frac{1}{2^q}-\frac{b}{2^{q+1}-2}}
L^{2+\frac{c}{2^{q+1}-2}}\big(\s(KL)\vp(KL)\big)^{-\frac{d}{2^{q+1}-2}-\frac{1}{2^q}}\Big).
   \end{multline*}
It suffices to arrange the parameters $a, b, c, d\in\R$ so that to make the last two terms equal. Namely, it is enough to take
\begin{align*}
  a=\frac{1-a}{2^{q+1}-2}&\Longleftrightarrow a=\frac{1}{2^{q+1}-1},\\
2+b=2+\frac{1}{2^q}-\frac{b}{2^{q+1}-2}&\Longleftrightarrow b=\frac{1}{2^q}\frac{2^{q+1}-2}{2^{q+1}-1},\\
2-c=2+\frac{c}{2^{q+1}-2}&\Longleftrightarrow c=0,\\
d=-\frac{d}{2^{q+1}-2}-\frac{1}{2^q}&\Longleftrightarrow d=-\frac{1}{2^q}\frac{2^{q+1}-2}{2^{q+1}-1}.
\end{align*}
We now easily see, since we have assumed that $K\le L$, that $$1\le m^{-\frac{1}{2^{q+1}-1}}K^{-\frac{2^{q+1}-2}{2^q(2^{q+1}-1)}}\big(\s(KL)\vp(KL)\big)^{\frac{2^{q+1}-2}{2^q(2^{q+1}-1)}}\le K,$$ by our assumptions, thus $1\le R\lesssim K$ and consequently \eqref{billem1} follows, since
\begin{multline*}
\bigg|\sum_{L<l\le2L}\sum_{\genfrac{}{}{0pt}{}{K<k\le K'\le 2K}{P< kl\le P_1}}\Delta_1(l)\Delta_2(k)e^{2\pi i (\xi W(kl)+m\vp(kl)}\bigg|\\
\lesssim  m^{\frac{1}{2^{q+2}-2}}\ \log^{2}L\ \log^{2}K\ \big(\s(KL)\vp(KL)\big)^{-\frac{2^{q+1}-2}{2^q(2^{q+2}-2)}}
\ K^{\frac{2^{q+1}-2}{2^q(2^{q+2}-2)}}\ KL.
   \end{multline*}
   This completes the proof of Lemma \ref{billem}.
   \end{proof}
\section{The main lemma}\label{sectformlem}
We have just gathered all necessary estimates for the exponential sums in Section \ref{sectexp} and now we can formulate the main lemma of this paper. Lemma \ref{formlem} is the hearth of the matter and will allow us to prove both Theorem \ref{maxithm} and Theorem \ref{asymptthm}. Recall that $c_q=(2^{2q+2}+2^q-2)/(2^{2q+2}+2^q-3)$.
\begin{lem}\label{formlem}
Let $W:\Z\mapsto\Z$ be a fixed polynomial of degree $q\in\N$. Assume that $h\in\mathcal{F}_c$, $\vp$ be its inverse and $\g=1/c$ with $c\in[1, c_q)$. Let $\chi>0$ be a number obeying $(2^{2q+2}+2^q-2)(1-\g)+2^q(2^{q+3}-2)\chi<1$, then there exists $\chi'>0$ such that for every $N\in\N$ and for every $\xi\in[0, 1]$
\begin{align}\label{form}
   \sum_{p\in\mathbf{P}_{h, N}}\vp'(p)^{-1}\log p\ e^{2\pi i \xi W(p)}=\sum_{p\in\mathbf{P}_{N}} \log p\ e^{2\pi i \xi W(p)}+O\big(N^{1-\chi-\chi'}\big).
\end{align}
The implied constant is independent of $\xi\in[0, 1]$ and $N\in\N$.
\end{lem}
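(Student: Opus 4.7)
The plan is to reduce \eqref{form} to the exponential sum bounds in Lemma \ref{finboundlem} via Vaaler's Fourier approximation of the sawtooth function. The starting point is the exact arithmetic identity for the Piatetski--Shapiro indicator: for $n$ large enough, \eqref{ratefi} gives $\vp(n+1)-\vp(n)<1$, and so $\mathbf{1}_{\mathbf{P}_h}(p)=\mathbf{1}_{\mathbf{P}}(p)\cdot\bigl(\lfloor-\vp(p)\rfloor-\lfloor-\vp(p+1)\rfloor\bigr)$. Writing $\psi(x)=\{x\}-1/2$, the floor difference equals $(\vp(n+1)-\vp(n))+(\psi(-\vp(n+1))-\psi(-\vp(n)))$. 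Replacing $\log p\cdot\mathbf{1}_{\mathbf{P}}(p)$ by $\Lambda(p)$ (at the cost of a prime--power defect of size $O(\sqrt N\cdot N^{1-\g}\log N)$, negligible since $\g$ is close to $1$), the left side of \eqref{form} becomes $\Sigma_1+\Sigma_2$ where
\begin{align*}
\Sigma_1&=\sum_{n\le N}\Lambda(n)\vp'(n)^{-1}e^{2\pi i\xi W(n)}\bigl(\vp(n+1)-\vp(n)\bigr),\\
\Sigma_2&=\sum_{n\le N}\Lambda(n)\vp'(n)^{-1}e^{2\pi i\xi W(n)}\bigl(\psi(-\vp(n+1))-\psi(-\vp(n))\bigr).
\end{align*}

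For $\Sigma_1$, the mean value theorem gives $\vp(n+1)-\vp(n)=\vp'(n+\te_n)$ for some $\te_n\in(0,1)$, and Lemma \ref{funlemfi} yields $\vp'(n+\te_n)/\vp'(n)=1+O(\s(n)/n)$ (with $\s\equiv 1$ when $c>1$). Partial summation together with the prime number theorem controls the resulting error by $O(\log N)$, so that $\Sigma_1=\sum_{p\le N}\log p\cdot e^{2\pi i\xi W(p)}+O(\sqrt N\log N)$, which is the right side of \eqref{form} up to a negligible term.

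For $\Sigma_2$ I apply Vaaler's theorem at the cutoff $M=N^{1+\chi+\e}\vp(N)^{-1}$ dictated by Lemma \ref{finboundlem}: there exist coefficients $(\a_m)_{0<|m|\le M}$ with $|\a_m|\lesssim|m|^{-1}$ and a positive majorant $W_M$ of $|\psi-V_M|$ that is itself a trigonometric polynomial of degree $\le M+1$ with all coefficients $\lesssim M^{-1}$. Applying this to $\psi(-\vp(n))$ and $\psi(-\vp(n+1))$ separately, $\Sigma_2$ decomposes into an oscillatory main piece $\sum_{0<|m|\le M}\a_m(S_m^+-S_m^-)$ and a Vaaler remainder, where
\[
S_m^\pm=\sum_{n\le N}\Lambda(n)\vp'(n)^{-1}e^{2\pi i(\xi W(n)-m\vp(n+\d^\pm))},\qquad \d^+=1,\ \d^-=0.
\]
For each $m$ I dyadically split $[1,N]$ into intervals $(P,2P]$, use \eqref{compfi} together with partial summation to pull out the smooth monotone weight $\vp'(P)^{-1}\simeq P/\vp(P)$, and control the inner sum $\sum_{P<n\le 2P}\Lambda(n)e^{2\pi i(\xi W(n)-m\vp(n+\d^\pm))}$ by Lemma \ref{finboundlem}. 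The shift $\d^+=1$ does not affect the proof of Lemma \ref{finboundlem}, whose Van der Corput step depends only on the $(q{+}1)$-st derivative of the phase, unchanged up to constants by a translation of the argument by $1$; alternatively one writes $e^{-2\pi i m\vp(n+1)}=e^{-2\pi i m\vp(n)}e^{-2\pi i m(\vp(n+1)-\vp(n))}$ and absorbs the smooth unimodular factor via Abel summation. Summing over $m$ with the elementary estimate $\sum_{m\le M}m^{\b-1}\lesssim M^\b$ for $\b\in(0,1)$ produces a bound of the form $\vp'(N)^{-1}\cdot M^{1/(2^{q+1}-2)}(\s(N)\vp(N))^{-1/2^q}\cdot N^{1+(2^{q+1}-2)/(2^{2q+1}+2^q-2)}\log^{O(1)}N$ from the first term of \eqref{finbound}, and an analogous expression from the second. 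The Vaaler remainder is handled in the same way: its $m=0$ contribution is trivially bounded by $M^{-1}\cdot\vp'(N)^{-1}\cdot N\simeq N^{1-\chi-\e}$, and the nonzero frequencies reduce again to Lemma \ref{finboundlem} with smaller coefficients.

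The main obstacle is the exponent bookkeeping at the very end: substituting $\vp(N)\simeq N^\g$, $M\simeq N^{1+\chi+\e-\g}$ and $\s(N)=N^{o(1)}$, one must verify that both resulting exponents lie strictly below $1-\chi$. Algebraic manipulation shows this is precisely equivalent to the hypothesis $c<c_q=(2^{2q+2}+2^q-2)/(2^{2q+2}+2^q-3)$ (which forces $\g$ strictly above the critical exponent) together with the slack $(2^{2q+2}+2^q-2)(1-\g)+2^q(2^{q+3}-2)\chi<1$ built into the statement. Both are strict inequalities, so there is a positive gap $\chi'>0$, uniform in $\xi\in[0,1]$ and $N\in\N$, yielding \eqref{form}.
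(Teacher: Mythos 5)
Your proposal is correct and follows essentially the same route as the paper: the same floor-function identity and sawtooth decomposition, the same truncated Fourier expansion of $\Phi$ at the cutoff $M=N^{1+\chi+\e}\vp(N)^{-1}$, reduction of the oscillatory piece to Lemma \ref{finboundlem} via partial summation, and the same final exponent verification against the hypothesis on $\chi$ and $\g$. The only (harmless) deviations are cosmetic --- invoking Vaaler's approximation in place of the paper's expansion \eqref{four1}--\eqref{fcoe2}, and estimating the two shifted phases $\vp(n)$, $\vp(n+1)$ separately rather than through the combined factor $\phi_m(k)=\vp'(k)^{-1}\big(e^{2\pi i m(\vp(k+1)-\vp(k))}-1\big)$ as in \eqref{lastest} --- and your fallback of absorbing the unimodular shift factor by Abel summation is exactly the paper's device.
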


We shall provide detailed proof of Lemma \ref{formlem}, which will be based on the ideas of Heath--Brown  from \cite{HB}.
A variant of Lemma \ref{formlem}, for the set of Piatetski--Shapiro primes with $W(x)=x$, was proved by Balog and Friedlander \cite{BF} and by Kumchev \cite{Kum}. They used this result to show that the ternary Goldbach problem has a solution in the Piatetski--Shapiro primes. We start with the following.
\begin{lem}\label{lemest0}
Let $\Phi(x)=\{x\}-1/2$ and $\Lambda(n)$ denotes von Mangoldt's. Then, under the assumptions of Lemma \ref{formlem}, for every $N\in\N$ and $0<\e<\frac{\chi}{100(2^{q+2}-1)}$ we have
\begin{align}\label{lem0eq1}
  \sum_{p\in\mathbf{P}_{h, N}}\vp'(p)^{-1}\log p\ e^{2\pi i \xi W(p)}&=\sum_{p\in\mathbf{P}_{N}} \log p\ e^{2\pi i \xi W(p)}\\
 \nonumber +\sum_{k=1}^N \vp'(k)^{-1}&\big(\Phi(-\vp(k+1))-\Phi(-\vp(k))\big)\Lambda(k)e^{2\pi i \xi W(k)}+
  O\big(N^{1-\chi-\e}\big).
\end{align}
\end{lem}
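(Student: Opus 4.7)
The starting point is a counting identity for the indicator of $\mathbf{P}_h$. For any prime $p$ large enough that $\vp'<1$ on $[p,p+1]$ (eventually true by \eqref{ratefi}), the condition $p\in\mathbf{P}_h$ is equivalent to the interval $[\vp(p),\vp(p+1))$ containing an integer, so
\begin{align*}
  \mathbf{1}_{\mathbf{P}_h}(p)=\lfloor\vp(p+1)\rfloor-\lfloor\vp(p)\rfloor.
\end{align*}
Using $\Phi(-x)=1/2-\{x\}$ for $x\notin\Z$ gives the pointwise identity $\lfloor x\rfloor=x-1/2+\Phi(-x)$, hence
\begin{align*}
  \mathbf{1}_{\mathbf{P}_h}(p)=\bigl(\vp(p+1)-\vp(p)\bigr)+\bigl(\Phi(-\vp(p+1))-\Phi(-\vp(p))\bigr).
\end{align*}
The finitely many small primes where $\vp'(p)\ge 1$ contribute $O(1)$ and can be discarded.

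Inserting this identity into the left-hand side of \eqref{lem0eq1} splits the sum as $A_1+A_2$. For $A_1$, Taylor's theorem gives $\vp(p+1)-\vp(p)=\vp'(p)+O(\vp''(\eta_p))$ for some $\eta_p\in(p,p+1)$, and Lemma \ref{funlemfi} supplies $\vp''(x)/\vp'(x)=O(1/x)$ (the factor $\s(x)\t(x)$ in the case $c=1$ is bounded by Lemma \ref{funlemfi} and hence harmless). Consequently
\begin{align*}
  \vp'(p)^{-1}\bigl(\vp(p+1)-\vp(p)\bigr)=1+O(1/p),
\end{align*}
so $A_1$ equals $\sum_{p\in\mathbf{P}_N}\log p\cdot e^{2\pi i\xi W(p)}$ plus an error bounded by $\sum_{p\le N}\log p/p\lesssim \log^2 N$, which is absorbed into $O(N^{1-\chi-\e})$.

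For $A_2$, the sum already matches the second term on the right-hand side of \eqref{lem0eq1} after replacing the weight $\mathbf{1}_{\mathbf{P}}(n)\log n$ by $\Lambda(n)$, and the discrepancy is supported on prime powers $n=p^m$ with $m\ge 2$. Since $|\Phi|\le 1/2$, and since \eqref{funfi}--\eqref{ratefi} yield $\vp'(x)^{-1}\lesssim_{\e}x^{1-\g+\e}$ for every $\e>0$, the total prime-power contribution is bounded by
\begin{align*}
  \sum_{m\ge 2}\ \sum_{p\le N^{1/m}}(p^m)^{1-\g+\e}\log p\ \lesssim\ N^{3/2-\g+2\e}.
\end{align*}
The hypothesis $(2^{2q+2}+2^q-2)(1-\g)+2^q(2^{q+3}-2)\chi<1$ forces both $1-\g$ and $\chi$ to be extremely small (in particular $\chi+\e\ll\g-1/2$), so $3/2-\g+2\e<1-\chi-\e$ comfortably holds and the prime-power error is $O(N^{1-\chi-\e})$.

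The core identity is elementary; the only real work is the quantitative bookkeeping of the three error sources (the Taylor remainder in $A_1$, the passage $\log p\,\mathbf{1}_{\mathbf{P}}\to\Lambda$ in $A_2$, and the finite exceptional range). All of these fit inside $O(N^{1-\chi-\e})$ thanks to the derivative estimates of Section \ref{sectf}, so no substantial obstacle arises once Lemma \ref{funlemfi} is in hand.
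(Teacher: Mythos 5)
Your argument is essentially the paper's own proof: the same splitting of $\mathbf{1}_{\mathbf{P}_h}(p)$ into $\big(\vp(p+1)-\vp(p)\big)+\big(\Phi(-\vp(p+1))-\Phi(-\vp(p))\big)$, the same Taylor-plus-Mertens treatment of the main term via $\vp''(x)/\vp'(x)=O(1/x)$, the same $O(N^{3/2-\g+\e})$ prime-power bound when passing from $\mathbf{1}_{\mathbf{P}}\log$ to $\Lambda$, and the same exponent arithmetic. The only difference is cosmetic: the paper counts with $\lfloor-\vp(p)\rfloor-\lfloor-\vp(p+1)\rfloor$ (i.e. ceilings), for which the $\Phi$-identity is exact with no integrality caveat, whereas your $\lfloor\vp(p+1)\rfloor-\lfloor\vp(p)\rfloor$ version carries the undischarged (but easily repaired) proviso that $\vp(p)$ and $\vp(p+1)$ are not integers.
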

\begin{proof}
Form \cite{M1} we know that $p\in\mathbf{P}_{h} \Longleftrightarrow\ \lfloor-\vp(p)\rfloor-\lfloor-\vp(p+1)\rfloor=1,$
for all sufficiently large $p\in\mathbf{P}_{h}$. By the definition of function $\Phi(x)=\{x\}-1/2$ we obtain that for every $p\in\N$ there exists  $\xi_p\in(0, 1)$ such that
\begin{align*}
  \lfloor-\vp(p)\rfloor-\lfloor-\vp(p+1)\rfloor&=\vp(p+1)-\vp(p)+\Phi(-\vp(p+1))-\Phi(-\vp(p))\\
  &=\vp'(p)+\vp''(p+\xi_p)/2+\Phi(-\vp(p+1))-\Phi(-\vp(p)).
\end{align*}
Thus by Mertens theorem (see \cite{Nat} Theorem 6.6, page 160) we have
\begin{multline*}
  \sum_{p\in\mathbf{P}_{h, N}}\vp'(p)^{-1}\log p\ e^{2\pi i \xi W(p)}=\sum_{p\in\mathbf{P}_{N}} \vp'(p)^{-1}\big(\lfloor-\vp(p)\rfloor-\lfloor-\vp(p+1)\rfloor\big)\log p\ e^{2\pi i \xi W(p)}\\
  \ \ \ \ \ \ \ \ \  =\sum_{p\in\mathbf{P}_{N}} \log p\ e^{2\pi i \xi W(p)}
  +\sum_{p\in\mathbf{P}_{N}} \vp'(p)^{-1}\big(\Phi(-\vp(p+1))-\Phi(-\vp(p))\big)\log p\ e^{2\pi i \xi W(p)}
  +O(\log N).
\end{multline*}
The proof is completed since
\begin{align*}
  \sum_{p\in\mathbf{P}_{N}} &\vp'(p)^{-1}\big(\Phi(-\vp(p+1))-\Phi(-\vp(p))\big)\log p\ e^{2\pi i \xi W(p)}\\
  &=\sum_{k=1}^N \vp'(k)^{-1}\big(\Phi(-\vp(k+1))-\Phi(-\vp(k))\big)\Lambda(k) e^{2\pi i \xi W(k)}+
  O\big(N^{3/2-\g+\e}\big).
\end{align*}
In view of the assumptions of Lemma \ref{formlem} we see that $O\big(N^{3/2-\g+\e}\big)=O\big(N^{1-\chi-\e}\big)$, since
$3/2-\g+\e=1-\chi-\e+(2(1-\g)-1+2(2\e+\chi))/2<1-\chi-\e$ as desired.
\end{proof}
The proof of Lemma \ref{formlem} will be completed if we show that
\begin{align}\label{forma1}
  \sum_{k=1}^N \vp'(k)^{-1}\big(\Phi(-\vp(k+1))-\Phi(-\vp(k))\big)\Lambda(k)e^{2\pi i \xi W(k)}=O\big(N^{1-\chi-\chi'}\big),
\end{align}
for $\chi>0$  such that $(2^{2q+2}+2^q-2)(1-\g)+2^q(2^{q+3}-2)\chi<1$ and some $\chi'>0$. Expanding $\Phi$ into the Fourier series (see \cite{HB} Section 2), we obtain
\begin{align}\label{four1}
  \Phi(t)=\sum_{0<|m|\le M}\frac{1}{2\pi i m}e^{-2\pi imt}+O\left(\min\left\{1, {(M\|t\|)^{-1}}\right\}\right),
\end{align}
for $M>0$, where $\|t\|=\min_{n\in\Z}|t-n|$ is the distance of $t\in\R$ to the nearest integer. Moreover,
\begin{align}\label{four2}
  \min\left\{1, (M\|t\|)^{-1}\right\}=\sum_{m\in\Z}b_m e^{2\pi imt},
\end{align}
where
\begin{align}\label{fcoe2}
  |b_m|\lesssim \min\left\{M^{-1}{\log M}, |m|^{-1}, M|m|^{-2}\right\}.
\end{align}

\begin{lem}\label{lemest1}
Let $W:\Z\mapsto\Z$ be a fixed polynomial of degree $q\in\N$. Assume that $P\ge1$ and  $M=P^{1+\chi+\e}\vp(P)^{-1}$ with  $\chi>0$  such that $(2^{2q+2}+2^q-2)(1-\g)+2^q(2^{q+3}-2)\chi<1$ and $0<\e<\frac{\chi}{100(2^{q+2}-1)}$.  Then we have
\begin{align}\label{lem1eq1}
  &\sum_{P<k\le P_1\le 2P} \vp'(k)^{-1}\big(\Phi(-\vp(k+1))-\Phi(-\vp(k))\big)\Lambda(k)e^{2\pi i \xi W(k)}\\
  \nonumber=\sum_{0<|m|\le M}&\frac{1}{2\pi i m}\sum_{P<k\le P_1\le 2P} \vp'(k)^{-1}\Lambda(k) \Big(e^{2\pi i(\xi W(k)+m\vp(k+1))}-e^{2\pi i(\xi W(k)+m\vp(k))}\Big)+O\big(P^{1-\chi-\e}\big).
\end{align}
\end{lem}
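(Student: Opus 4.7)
The strategy is to substitute the truncated Fourier expansion \eqref{four1} of $\Phi$ into the left-hand side of \eqref{lem1eq1} at both points $t=-\vp(k+1)$ and $t=-\vp(k)$. Writing
\[
\Phi(-\vp(k+j))=\sum_{0<|m|\le M}\frac{1}{2\pi i m}e^{2\pi i m\vp(k+j)}+R_j(k),\qquad |R_j(k)|\lesssim g_M(\vp(k+j)),
\]
with $g_M(t):=\min\{1,(M\|t\|)^{-1}\}$, the Fourier sums combine (as $j=1$ minus $j=0$) to produce precisely the desired main term on the right-hand side of \eqref{lem1eq1}. It then suffices to show that
\[
E:=\sum_{P<k\le P_1}\vp'(k)^{-1}\Lambda(k)e^{2\pi i\xi W(k)}\big[R_1(k)-R_0(k)\big]
\]
satisfies $|E|=O(P^{1-\chi-\e})$.

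By the triangle inequality $|E|\lesssim T_0+T_1$ with $T_j=\sum_{P<k\le P_1}\vp'(k)^{-1}\Lambda(k)g_M(\vp(k+j))$. Inserting the Fourier series \eqref{four2} of $g_M$ and swapping the order of summation gives
\[
T_j=\sum_{m\in\Z}b_m\sum_{P<k\le P_1}\vp'(k)^{-1}\Lambda(k)e^{2\pi i m\vp(k+j)}.
\]
Since $\vp'$ is slowly varying on $(P,P_1]$ with $\vp'(k)^{-1}\simeq P/\vp(P)$ by \eqref{fiequat}, Abel summation via Lemma \ref{sbp} (together with the shift $k\mapsto k-1$ to absorb the $j=1$ case) bounds the inner sum by $O(P/\vp(P))\cdot\sup_{P<y\le P_1}\big|\sum_{P<k\le y}\Lambda(k)e^{2\pi i m\vp(k)}\big|$ up to a $\log P$ factor. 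For $m=0$, Chebyshev gives $\sum_{k}\Lambda(k)\lesssim P$ and $|b_0|\lesssim M^{-1}\log M$, contributing $\lesssim P^2 M^{-1}(\log M)/\vp(P)=P^{1-\chi-\e}\log M$. For $m\ne 0$, Lemma \ref{finboundlem} with $\xi=0$ supplies the exponential sum bound, and the decay \eqref{fcoe2} of the $b_m$ combined with splitting the range at $|m|\asymp M$ yields
\[
\sum_{m\ne 0}|b_m|\,|m|^{1/(2^{q+r}-2)}\lesssim M^{1/(2^{q+r}-2)},\qquad r=1,2.
\]

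The main obstacle is the final size verification. Multiplying the two terms coming out of Lemma \ref{finboundlem} by $M^{1/(2^{q+r}-2)}$ and by the outer weight $P/\vp(P)$, and then substituting $M=P^{1+\chi+\e}\vp(P)^{-1}$ together with $\vp(P)\simeq P^{\g}\ell_\vp(P)$ and the slow-variation bound $\ell_\vp(P)\ll_\e P^\e$ from \eqref{slowhfi}, one is reduced to a pair of polynomial inequalities in $\g$ and $\chi$. After clearing denominators, the dominant one is exactly the hypothesis $(2^{2q+2}+2^q-2)(1-\g)+2^q(2^{q+3}-2)\chi<1$, while the smallness condition $\e<\chi/[100(2^{q+2}-1)]$ is calibrated to leave enough positive slack to absorb the $\log P$ losses from Abel summation and the slowly varying factors $\s(P)\vp(P)$ appearing in Lemma \ref{finboundlem}. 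The entire reason for the particular shape of the hypothesis is precisely this calibration of the threshold $M$ against the Heath--Brown-type exponential sum estimates of Section \ref{sectexp}, and carrying through the resulting bookkeeping is the bulk of the work.
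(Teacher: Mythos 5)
Your overall strategy coincides with the paper's: substitute the truncated expansion \eqref{four1} at $t=-\vp(k)$ and $t=-\vp(k+1)$, observe that the Fourier main terms assemble into the right-hand side of \eqref{lem1eq1}, and control the leftover $\min\{1,(M\|\cdot\|)^{-1}\}$ contributions through the series \eqref{four2}. The $m=0$ computation and the size verification you sketch for $0<|m|\le M$ are in order (up to the same harmless $\log$-power slack that the paper itself silently absorbs).

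The genuine gap is in the error term for $|m|>M$. After inserting \eqref{four2} the sum over $m$ runs over all of $\Z$, and you invoke Lemma \ref{finboundlem} for every $m\ne0$. But Lemma \ref{finboundlem} is stated, and its proof via Lemma \ref{billem} only works, for $0<|m|\le M$: the Weyl--van der Corput shift in Lemma \ref{billem} needs $|m|\min\{K,L\}^{(2^{q+1}-2)/2^q}\le\big(\s(KL)\vp(KL)\big)^{(2^{q+1}-2)/2^q}$ so that the shift length $R$ is at least $1$, and this fails once $|m|$ exceeds $M$. Nor can the range $M<|m|\lesssim P^{2}M$ be discarded trivially: $\sum_{|m|>M}|b_m|=O(1)$ while the trivial bound on the inner sum is $O(P)$, which after the outer weight $\vp'(P)^{-1}\simeq P/\vp(P)$ gives $O(P^{2-\g}\log P)$, far above the target $P^{1-\chi-\e}$. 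The paper sidesteps this entirely, and this is the one idea you are missing: since $\min\{1,(M\|t\|)^{-1}\}\ge0$, one first bounds $\Lambda(k)\vp'(k)^{-1}\lesssim\log P\cdot\vp'(P)^{-1}$ pointwise and only then expands via \eqref{four2}; the remaining inner sum is the smooth exponential sum $\sum_{P<k\le P_1}e^{2\pi i m\vp(k)}$ with no von Mangoldt weight, and Lemma \ref{vdclem2} (plain van der Corput, valid for every $m\ne0$) bounds it by $|m|^{1/2}P\big(\s(P)\vp(P)\big)^{-1/2}$ uniformly in $m$. This both closes the $|m|>M$ range and makes the heavy machinery of Lemma \ref{finboundlem} (Vaughan's identity and the type I/II sums) unnecessary at this step; with that substitution your argument matches the paper's proof.
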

\begin{proof}
Let $S$ be the first sum in \eqref{lem1eq1}, then  the Fourier expansion \eqref{four1} yields
\begin{align*}
  S
  &=\sum_{0<|m|\le M}\frac{1}{2\pi i m}\sum_{P<k\le P_1\le 2P} \vp'(k)^{-1}\Lambda(k) \Big(e^{2\pi i(\xi W(k)+m\vp(k+1))}-e^{2\pi i(\xi W(k)+m\vp(k))}\Big)\\
  &+O\bigg(\sum_{P<k\le P_1\le 2P}\vp'(k)^{-1}\Lambda(k)\big(\min\big\{1, ({M\|\vp(k)\|})^{-1}\big\}+\min\big\{1, ({M\|\vp(k+1)\|})^{-1}\big\}\big)\bigg).
\end{align*}
In a similar way as in \cite{M1} it suffices to bound the error term with $\min\big\{1, ({M\|\vp(k)\|})^{-1}\big\}$. The same reasoning will give the same bound for the sum with $\min\big\{1, ({M\|\vp(k+1)\|})^{-1}\big\}$. By \eqref{four2} we see that
\begin{align*}
 \sum_{P<k\le P_1\le 2P}\frac{\Lambda(k)}{\vp'(k)}\cdot\min\big\{1,({M\|\vp(k)\|})^{-1}\big\}
 \lesssim\frac{\log P}{\vp'(P)}\sum_{m\in\Z}|b_m|\bigg|\sum_{P<k\le P_1\le 2P} e^{2\pi im\vp(k)}\bigg|.
\end{align*}
Lemma \ref{vdclem2} applied to the inner sum with $l=1$, $j=0$  and $q=1$  and the bounds \eqref{fcoe2} for $|b_m|$ imply that
\begin{align*}
  \sum_{m\ge0}|b_m|\bigg|\sum_{P<k\le P_1\le 2P} e^{2\pi im\vp(k)}\bigg|\lesssim \frac{P\ \log M}{M} +
  \bigg(\sum_{0<m\le M}\frac{\log M}{M}+\sum_{m> M}\frac{M}{m^2}\bigg)\frac{m^{1/2}P}{\big(\s(P)\vp(P)\big)^{1/2}}\\
  \lesssim\frac{P\ \log M}{M}+\log M M^{1/2}\frac{P}{\big(\s(P)\vp(P)\big)^{1/2}}.
\end{align*}
Taking $M=P^{1+\chi+\e}\vp(P)^{-1}$,  we obtain
\begin{multline*}
\frac{\log P}{\vp'(P)} \sum_{m\ge0}|b_m|\bigg|\sum_{P<k\le P_1\le 2P} e^{2\pi im\vp(k)}\bigg|
\lesssim
 \frac{P\ \log M\ \log P}{\vp'(P)M}+\log M M^{1/2}\frac{P\log P}{\vp'(P)\big(\s(P)\vp(P)\big)^{1/2}}\\
 \lesssim \frac{\vp(P)P^{-\chi-\e}}{\vp'(P)}\ \log^2 P+ \frac{P^{3/2+\chi/2+\e/2}}{\vp'(P)\s(P)^{1/2}\vp(P)}\ \log^2 P\\
 \lesssim\frac{\vp(P)P^{-\chi-\e}}{\vp'(P)}\ \log^2 P\left(1+\frac{P^{3/2+3\chi/2+3\e/2}}{\s(P)^{1/2}\vp(P)^2}\right)
 \lesssim\frac{\vp(P)P^{-\chi-\e}}{\vp'(P)}\lesssim P^{1-\chi-\e}.
\end{multline*}
Taking $0<\e<\frac{\chi}{100(2^{q+2}-1)}<\chi/100$ one can show that the last parenthesis is bounded. Namely, due to the inequalities $x^{\g-\e}\lesssim_{\e}\vp(x)$, and $(\s(x))^{-1}\lesssim_{\e}x^{\e}$
which hold for arbitrary $\e>0$ we easily see that $3/2+3\chi/2+3\e/2+\e/2-2\g+2\e<0,$ since $3+3\chi+8\e-4\g<4(1-\g)+4\chi-1<(2^{2q+2}+2^q-2)(1-\g)+2^q(2^{q+3}-2)\chi-1<0$,
and this finishes the proof.
\end{proof}
Now we can illustrate the proof of Lemma \ref{formlem}.
\begin{proof}[Proof of Lemma \ref{formlem}]
Recall that   $\chi>0$  such that $(2^{2q+2}+2^q-2)(1-\g)+2^q(2^{q+3}-2)\chi<1$ and $0<\e<\frac{\chi}{100(2^{q+2}-1)}$. Then combining Lemma \ref{lemest0} with Lemma \ref{lemest1} we see that
\begin{multline}\label{estim1}
  \bigg|\sum_{p\in\mathbf{P}_{h, N}}\vp'(p)^{-1}\log p\ e^{2\pi i \xi W(p)}-\sum_{p\in\mathbf{P}_{N}} \log p\ e^{2\pi i \xi W(p)}\bigg|\\
  \lesssim\log N \sup_{1\le P\le N}\bigg|\sum_{P<k\le P_1\le 2P} \vp'(k)^{-1}\big(\Phi(-\vp(k+1))-\Phi(-\vp(k))\big)\Lambda(k)e^{2\pi i \xi W(k)}\bigg|+N^{1-\chi-\e}\\
  \lesssim\log N \sup_{1\le P\le N}\sum_{0<|m|\le M}\frac{1}{m}\bigg|\sum_{P<k\le P_1\le 2P} \vp'(k)^{-1}\Lambda(k) \Big(e^{2\pi i(\xi W(k)+m\vp(k+1))}-e^{2\pi i(\xi W(k)+m\vp(k))}\Big)\bigg|\\
  +N^{1-\chi-\e}\log N,
\end{multline}
where $M=P^{1+\chi+\e}\vp(P)^{-1}$. In order to bound the error term in \eqref{estim1} let us introduce $U_m(x)=\sum_{P< k\le x}\Lambda(k)e^{2\pi i(\xi W(k)+m\vp(k))}$, and  $\phi_m(k)=\vp'(k)^{-1}\big(e^{2\pi im(\vp(k+1)-\vp(k))}-1\big)$. It is easy to note that $|\phi_m(x)|\lesssim m$ and $|\phi_m'(x)|\lesssim\frac{m}{x}$. Therefore, summation by parts and the estimate \eqref{finbound} give


\begin{multline}\label{lastest}
\sum_{0<|m|\le M}\frac{1}{m}\bigg|\sum_{P<k\le P_1\le 2P} \vp'(k)^{-1}\Lambda(k) \Big(e^{2\pi i(\xi W(k)+m\vp(k+1))}-e^{2\pi i(\xi W(k)+m\vp(k))}\Big)\bigg|\\
 \lesssim \sum_{m=1}^M\frac{1}{m}\bigg(|U_m(P_1)\phi_m(P_1)|+\int_P^{P_1}|U_m(x)\phi_m'(x)|dx\bigg)
 \lesssim \sum_{m=1}^M\sup_{x\in(P, 2P]}|U_m(x)|\\
 \lesssim
 \sum_{m=1}^Mm^{\frac{1}{2^{q+1}-2}}\log^2 P\ \big(\s(P)\vp(P)\big)^{-\frac{1}{2^q}}P^{1+\frac{2^{q+1}-2}{2^{2q+1}+2^q-2}}\\
 +
 \sum_{m=1}^Mm^{\frac{1}{2^{q+2}-2}}\log^{6}P\ \big(\s(P)\vp(P)\big)^{-\frac{2^{q+1}-2}{2^q(2^{q+2}-2)}}P^{1+\frac{2^q-1}{2^q(2^{q+2}-2)}}\\
 \lesssim  M^{1+\frac{1}{2^{q+1}-2}}\log^2 P\ \big(\s(P)\vp(P)\big)^{-\frac{1}{2^q}}P^{1+\frac{2^{q+1}-2}{2^{2q+1}+2^q-2}}\\
 +  M^{1+\frac{1}{2^{q+2}-2}}\log^{6}P\ \big(\s(P)\vp(P)\big)^{-\frac{2^{q+1}-2}{2^q(2^{q+2}-2)}}P^{1+\frac{2^q-1}{2^q(2^{q+2}-2)}}.
\end{multline}
Now we have to estimate the last two terms in \eqref{lastest}. We will use the inequalities $x^{\g-\e}\lesssim_{\e}\vp(x)$, $\s(x)^{-1}\lesssim_{\e}x^{\e}$ and $\log x\lesssim_{\e}x^{\e/50}$ which hold with arbitrary $\e>0$.
Since $M=P^{1+\chi+\e}\vp(P)^{-1}$ with  $\chi>0$  such that $(2^{2q+2}+2^q-2)(1-\g)+2^q(2^{q+3}-2)\chi<1$ and $0<\e<\frac{\chi}{100(2^{q+2}-1)}$, it is easy to see  that
\begin{multline*}
  M^{1+\frac{1}{2^{q+1}-2}}\log^2 P\ \big(\s(P)\vp(P)\big)^{-\frac{1}{2^q}}P^{1+\frac{2^{q+1}-2}{2^{2q+1}+2^q-2}}\\
=\left(P^{1+\chi+\e}\vp(P)^{-1}\right)^{1+\frac{1}{2^{q+1}-2}}\log^2 P\ \big(\s(P)\vp(P)\big)^{-\frac{1}{2^q}}P^{1+\frac{2^{q+1}-2}{2^{2q+1}+2^q-2}}\\
  =P^{1+\chi+\e+\frac{\chi+\e}{2^{q+1}-2}+1+\frac{1}{2^{q+1}-2}+\frac{2^{q+1}-2}{2^{2q+1}+2^q-2}}
\vp(P)^{-1-\frac{1}{2^q}-\frac{1}{2^{q+1}-2}}\s(P)^{-\frac{1}{2^q}}\log^2 P\\
  \lesssim P^{1-\chi+10\e+\frac{(2^{q+2}-3)\chi}{2^{q+1}-2}+1+\frac{1}{2^{q+1}-2}+\frac{2^{q+1}-2}{2^{2q+1}+2^q-2}
-\g\left(1+\frac{1}{2^q}+\frac{1}{2^{q+1}-2}\right)}\\
\lesssim P^{1-\chi+1+\frac{1}{2^{q+1}-2}+\frac{2^{q+1}-2}{2^{2q+1}+2^q-2}+\frac{(2^{q+2}-2)\chi}{2^{q+1}-2}
-\g\left(\frac{2^{2q+1}+2^q-2}{2^q(2^{q+1}-2)}\right)}
\lesssim P^{1-\chi-\e'},
\end{multline*}
for some $\e'>0$, since $\log^2P\lesssim_{\e}P^{\e}$ and
\begin{multline*}
 1+\frac{1}{2^{q+1}-2}+\frac{2^{q+1}-2}{2^{2q+1}+2^q-2}+\frac{(2^{q+2}-2)\chi}{2^{q+1}-2}
-\g\left(\frac{2^{2q+1}+2^q-2}{2^q(2^{q+1}-2)}\right)<0\\
\Longleftrightarrow 2^q(2^{q+1}-2)+2^q+\frac{2^q(2^{q+1}-2)^2}{2^{2q+1}+2^q-2}+2^q(2^{q+2}-2)\chi<\g(2^{2q+1}+2^q-2)\\
\Longleftrightarrow(2^{2q+1}+2^q-2)(1-\g)+2^q(2^{q+2}-2)\chi-1+\frac{2^q(2^{q+1}-2)^2}{2^{2q+1}+2^q-2}-2^{q+1}+3<0\\
\Longleftrightarrow(2^{2q+1}+2^q-2)(1-\g)+2^q(2^{q+2}-2)\chi-1-\frac{(2^q-2)(2^{q+2}-3)}{2^{2q+1}+2^q-2}<0.
\end{multline*}
On the other hand, we get
\begin{multline*}
  M^{1+\frac{1}{2^{q+2}-2}}\log^{6}P\ \big(\s(P)\vp(P)\big)^{-\frac{2^{q+1}-2}{2^q(2^{q+2}-2)}}P^{1+\frac{2^q-1}{2^q(2^{q+2}-2)}}\\
  =\left(P^{1+\chi+\e}\vp(P)^{-1}\right)^{1+\frac{1}{2^{q+2}-2}}\log^{6}P\ \big(\s(P)\vp(P)\big)^{-\frac{2^{q+1}-2}{2^q(2^{q+2}-2)}}P^{1+\frac{2^q-1}{2^q(2^{q+2}-2)}}\\
  =P^{1+\chi+\e+\frac{\chi+\e}{2^{q+2}-2}+1+\frac{1}{2^{q+2}-2}+\frac{2^q-1}{2^q(2^{q+2}-2)}}
\vp(P)^{-1-\frac{1}{2^{q+2}-2}-\frac{2^{q+1}-2}{2^q(2^{q+2}-2)}}\s(P)^{-\frac{2^{q+1}-2}{2^q(2^{q+2}-2)}}\log^{6}P\\
  \lesssim P^{1-\chi+10\e+\frac{(2^{q+3}-3)\chi}{2^{q+2}-2}+1+\frac{1}{2^{q+2}-2}+\frac{2^q-1}{2^q(2^{q+2}-2)}
-\g\left(1+\frac{1}{2^{q+2}-2}+\frac{2^{q+1}-2}{2^q(2^{q+2}-2)}\right)}\\
\lesssim P^{1-\chi+\frac{2^{2q+2}-1}{2^q(2^{q+2}-2)}+\frac{(2^{q+3}-2)\chi}{2^{q+2}-2}
-\g\frac{2^{2q+2}+2^q-2}{2^q(2^{q+2}-2)}}\lesssim P^{1-\chi-\e'}.
\end{multline*}
for some $\e'>0$, since $\log^{6}P\lesssim_{\e}P^{\e}$ and
\begin{multline*}
  \frac{2^{2q+2}-1}{2^q(2^{q+2}-2)}+\frac{(2^{q+3}-2)\chi}{2^{q+2}-2}
-\g\frac{2^{2q+2}+2^q-2}{2^q(2^{q+2}-2)}<0\\
\Longleftrightarrow 2^{2q+2}-1+2^q(2^{q+3}-2)\chi<\g(2^{2q+2}+2^q-2)\\
\Longleftrightarrow (2^{2q+2}+2^q-2)(1-\g)+2^q(2^{q+3}-2)\chi-1-2^q+2<0.
\end{multline*}
This provides the desired upper bound for \eqref{lastest} and the proof of Lemma \ref{formlem} is completed.
\end{proof}

\section{Proof of Theorem \ref{maxithm}}\label{sectmax}
In this section, with the aid of Lemma \ref{formlem}, we shall illustrate the proof of Theorem \ref{maxithm}.
The maximal functions which will occur in this section will be initially defined for any nonnegative finitely supported function $f\ge0$ and unless otherwise stated $f$ is always such a function.
Let us introduce a maximal function
\begin{align}\label{maxh}
  \mathcal{M}_{h}f(x)=\sup_{N\in\N}|K_{h, N}*f(x)|\ \ \mbox{for  \  $x\in\Z$,}
\end{align}
corresponding with the kernel
\begin{align}\label{maxker}
  K_{h, N}(x)=\frac{1}{\pi_h(N)}\sum_{p\in\mathbf{P}_{h, N}}\d_{W(p)}(x) \ \ \mbox{for  \  $x\in\Z$,}
\end{align}
where $\d_n(x)$ denotes Dirac's delta at $n\in\N$ and $W:\Z\mapsto\Z$ is a fixed polynomial of degree $q\in\N$. Due to \eqref{asympto} we see that $\mathcal{M}_{h}f(x)\simeq M_{\mathbf{P}_h}f(x)$. Therefore, it suffices to show the inequality from \eqref{maxiest} with $\mathcal{M}_{h}f$ instead of $M_{\mathbf{P}_h}f$. Now we are going to slightly redefine the maximal function $\mathcal{M}_hf(x)$ defined in \eqref{maxh}. We will take the supremum over $\mathcal{D}=\{2^n: n\in\N\}$ rather that $\N$, i.e.
\begin{align}\label{maxh1}
  \mathcal{M}_hf(x)=\sup_{N\in\mathcal{D}}|K_{h, N}*f(x)|,
\end{align}
with $K_{h, N}$ defined in \eqref{maxker}. It will cause no confusion if we use the same letter $\mathcal{M}_{h}f$ in both definitions, since the maximal functions from \eqref{maxh} and \eqref{maxh1} are equivalent and give the same $\ell^r(\Z)$ bounds for $r>1$.

We start with some general observations concerning maximal functions.
\begin{lem}\label{estmax}
Let $S\subseteq\N$ be a fixed subset of integers and for $i=1, 2$, and $\Omega:\Z\mapsto\Z$ be a fixed function. Let us introduce
  \begin{itemize}
    \item  a nonnegative function $w_i:[0, \8)\mapsto[0, \8)$,
    \item  a sum $W_i(n)=\sum_{k\in S\cap[1, n]}w_i(k)$, corresponding with $w_i$, where $n\in\N$,
    \item  and a weighted maximal function
    $$\mathcal{M}_i^{\star}(f)(x)=\sup_{N\in Z}\frac{1}{W_i(N)}\Big|\sum_{k\in S\cap[1, N]}w_i(k)f(x-\Omega(k))\Big|,$$
  \end{itemize}
where $Z\subseteq\N$ and $f:\Z\mapsto\C$ is any nonnegative finitely supported function. Assume that
\begin{itemize}
  \item[(i)] the sequence $\left(\frac{w_2(n)}{w_1(n)}\right)_{n\in\N}$ is decreasing, or
  \item [(ii)] the sequence $\left(\frac{w_2(n)}{w_1(n)}\right)_{n\in\N}$ is increasing and $\sup_{n\in\N}\frac{w_2(n)\cdot W_1(n)}{w_1(n)\cdot W_2(n)}<\8$.
\end{itemize}
Then
$$\mathcal{M}_2^{\star}(f)(x)\lesssim \mathcal{M}_1^{\star}(f)(x),$$
for every $f\in\ell^r(\Z)$ and for every $x\in\Z$.
\end{lem}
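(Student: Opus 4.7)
\textbf{Proof proposal for Lemma \ref{estmax}.}

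The natural tool is Abel summation applied to the ratio $c(k) = w_2(k)/w_1(k)$. Since $f \ge 0$ and $w_i \ge 0$, the absolute values defining both maximal functions may be dropped. Fix $x \in \Z$ and $N \in Z$, enumerate $S \cap [1, N] = \{k_1 < k_2 < \cdots < k_m\}$, and abbreviate $a_j = f(x - \Omega(k_j))$, $u_j = w_1(k_j)$, $v_j = w_2(k_j)$, $c_j = v_j/u_j$, together with the partial sums
\begin{equation*}
S_j = \sum_{i=1}^{j} u_i a_i = \sum_{k \in S \cap [1, k_j]} w_1(k) f(x - \Omega(k)), \qquad U_j = \sum_{i=1}^{j} u_i = W_1(k_j).
\end{equation*}
Summation by parts (with $S_0 = 0$) gives the key identity
\begin{equation*}
\sum_{j=1}^{m} v_j a_j = \sum_{j=1}^{m} c_j (S_j - S_{j-1}) = c_m S_m + \sum_{j=1}^{m-1} (c_j - c_{j+1}) S_j.
\end{equation*}

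In case (i), the sequence $(c_j)$ is decreasing, so every coefficient $c_j - c_{j+1}$ is nonnegative. Bounding each $S_j \le U_j \, \mathcal{M}_1^\star(f)(x)$ and then unwinding the Abel summation in reverse, one checks
\begin{equation*}
c_m U_m + \sum_{j=1}^{m-1} (c_j - c_{j+1}) U_j = \sum_{j=1}^{m} c_j u_j = \sum_{j=1}^{m} v_j = W_2(N),
\end{equation*}
which yields $\sum_{j=1}^{m} v_j a_j \le W_2(N) \, \mathcal{M}_1^\star(f)(x)$, i.e.\ $\mathcal{M}_2^\star(f)(x) \le \mathcal{M}_1^\star(f)(x)$.

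In case (ii), the sequence $(c_j)$ is increasing, so $c_j - c_{j+1} \le 0$ and the sum of the correction terms is nonpositive; discarding it gives
\begin{equation*}
\sum_{j=1}^{m} v_j a_j \le c_m S_m = \frac{w_2(k_m)}{w_1(k_m)} \, S_m.
\end{equation*}
Since $k_m$ is the largest element of $S \cap [1, N]$ we have $W_2(N) = W_2(k_m)$, hence
\begin{equation*}
\frac{1}{W_2(N)} \sum_{j=1}^{m} v_j a_j \le \frac{w_2(k_m) W_1(k_m)}{w_1(k_m) W_2(k_m)} \cdot \frac{S_m}{W_1(k_m)} \le \Bigl(\sup_{n \in \N} \frac{w_2(n) W_1(n)}{w_1(n) W_2(n)}\Bigr) \mathcal{M}_1^\star(f)(x),
\end{equation*}
and taking the supremum over $N \in Z$ closes the argument.

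The only delicate point I anticipate is that $\mathcal{M}_1^\star$ is the supremum over $N \in Z$ rather than all of $\N$, whereas case (i) requires the majorization $S_j/U_j \le \mathcal{M}_1^\star(f)(x)$ at every intermediate index $k_j$. In the applications of the lemma (e.g.\ to $\mathcal{M}_h$ as redefined with $Z = \mathcal{D}$) this is harmless because the dyadic and non-dyadic versions of the maximal function are pointwise comparable; alternatively one can just take $Z = \N$ throughout the proof, since the bound $\mathcal{M}_2^\star(f) \lesssim \mathcal{M}_1^\star(f)$ transfers immediately to any restricted supremum.
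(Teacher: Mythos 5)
Your argument is correct and is essentially the paper's: both rest on Abel summation in the ratio $w_2/w_1$, though you streamline case (ii) by simply discarding the nonpositive correction sum (the paper instead performs a second summation by parts and ends with the constant $1+2C$ rather than your $C$), and you write out case (i), which the paper leaves to the reader. One caution about your closing remark: proving the inequality for $Z=\N$ does \emph{not} transfer to a restricted $Z$, since shrinking $Z$ decreases the majorant $\mathcal{M}_1^{\star}(f)(x)$ on the right-hand side; the correct fix --- and the one the paper relies on --- is your first one, namely that in the application the restricted (dyadic) and unrestricted maximal functions are pointwise comparable, so the intermediate bounds $S_j\le U_j\,\mathcal{M}_1^{\star}(f)(x)$ needed in case (i) remain available.
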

Some variant of this lemma was proved in \cite{Wie}, but our formulation is more handy. We will apply Lemma \ref{estmax} with $Z=\N$ or $Z=\mathcal{D}$.
\begin{proof}
Assume that the sequence $\left(\frac{w_2(n)}{w_1(n)}\right)_{n\in\N}$ is increasing and $\sup_{n\in\N}\frac{w_2(n)\cdot W_1(n)}{w_1(n)\cdot W_2(n)}=C<\8$.  Denote $M_i^Nf(x)=\sum_{k\in S\cap[1, N]}w_i(k)f(x-\Omega(k))$ for $i=1, 2$ with $M_i^0f(x)=0$. Without loss of generality we may assume that $f\ge0$. Then applying summation by parts twice and exploring positive nature of the maximal operators we can easily observe that for every $N\in Z$ we have
\begin{multline*}
  \frac{1}{W_2(N)}M_2^Nf(x)=\frac{1}{W_2(N)}\sum_{k\in S\cap[1, N]}\frac{w_2(k)}{w_1(k)}w_1(k)f(x-\Omega(k))\\
  =\frac{1}{W_2(N)}\sum_{n=1}^N\frac{w_2(n)}{w_1(n)}(M_1^{n}f(x)-M_1^{n-1}f(x))=\frac{w_2(N)\cdot W_1(N)}{w_1(N)\cdot W_2(N)}\frac{1}{W_1(N)}M_1^{N}f(x)\\
  +\frac{1}{W_2(N)}\sum_{n=1}^{N-1}W_1(n)\left(\frac{w_2(n)}{w_1(n)}-\frac{w_2(n+1)}{w_1(n+1)}\right)
  \frac{1}{W_1(n)}M_1^{n}f(x)\\
  \le\mathcal{M}_1^{\star}(f)(x)\cdot\bigg(\frac{1}{W_2(N)}\sum_{n=1}^{N-1}W_1(n)\left(\frac{w_2(n+1)}{w_1(n+1)}
  -\frac{w_2(n)}{w_1(n)}\bigg)
  +\frac{w_2(N)\cdot W_1(N)}{w_1(N)\cdot W_2(N)}\right)\\
  =\mathcal{M}_1^{\star}(f)(x)\cdot\frac{w_2(N)\cdot W_1(N)}{w_1(N)\cdot W_2(N)}+\mathcal{M}_1^{\star}(f)(x)\cdot\frac{w_2(N)\cdot W_1(N-1)}{w_1(N)\cdot W_2(N)}\\
  +\mathcal{M}_1^{\star}(f)(x)\cdot\bigg(\frac{1}{W_2(N)}\sum_{n=2}^{N-1}\frac{w_2(n)}{w_1(n)}\left(W_1(n-1)
  -W_1(n)\right)
  -\frac{w_2(1)\cdot\mathbf{1}_{S}(1)}{W_2(N)}\bigg)\\
  =\mathcal{M}_1^{\star}(f)(x)\cdot\bigg(\frac{2w_2(N)\cdot W_1(N)}{w_1(N)\cdot W_2(N)}-\frac{w_2(N)\cdot\mathbf{1}_{S}(N)}{W_2(N)}-\frac{1}{W_2(N)}\sum_{n\in S\cap[2, N-1]}w_2(n)
  -\frac{w_2(1)\cdot\mathbf{1}_{S}(1)}{W_2(N)}\bigg)\\
  \le \mathcal{M}_1^{\star}(f)(x)\cdot\left(1+2\sup_{n\in\N}\frac{w_2(n)\cdot W_1(n)}{w_1(n)\cdot W_2(n)}\right)
  \le (1+2C)\mathcal{M}_1^{\star}(f)(x).
\end{multline*}
This implies that $\mathcal{M}_2^{\star}(f)(x)\lesssim \mathcal{M}_1^{\star}(f)(x)$ and the desired inequality follows. The proof when the sequence $\left(\frac{w_2(n)}{w_1(n)}\right)_{n\in\N}$ decreases is similar, but simpler, and is left to the reader.
\end{proof}

\begin{lem}\label{lempart1}
Let $W:\Z\mapsto\Z$ be a polynomial of degree $q\in\N$ and define a maximal function $\mathcal{M}_h^1f(x)=\sup_{N\in\mathcal{D}}|K_{h, N}^1*f(x)|,$ where
$$K_{h, N}^1(x)=\frac{1}{N}\sum_{p\in\mathbf{P}_{h, N}}\vp'(N)^{-1}\log p\ \delta_{W(p)}(x).$$
Then
$$\mathcal{M}_hf(x)\lesssim\mathcal{M}_h^1f(x),$$
for every $x\in\Z$.
\end{lem}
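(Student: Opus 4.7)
The plan is to realize this pointwise inequality as a direct application of Lemma \ref{estmax}. First I would express both maximal functions within the framework of that lemma, using the common data $S = \mathbf{P}_h$ and $\Omega(n) = W(n)$: take $w_2(n) \equiv 1$, so that $W_2(n) = \pi_h(n)$ and $\mathcal{M}_2^{\star} f = \mathcal{M}_h f$; and take $w_1(n) = \log n$, so that $W_1(n) = \sum_{p \in \mathbf{P}_h \cap [1, n]} \log p$, and $\mathcal{M}_1^{\star} f$ is the maximal function with weight $\log p$ and normalization $W_1(N)$. The ratio $w_2(n)/w_1(n) = 1/\log n$ is eventually decreasing, so hypothesis $(i)$ of Lemma \ref{estmax} applies and yields
\[
\mathcal{M}_2^{\star} f(x) \lesssim \mathcal{M}_1^{\star} f(x)
\]
for every $x \in \Z$ and every nonnegative $f$.

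It then remains to relate $\mathcal{M}_1^{\star} f$ to $\mathcal{M}_h^1 f$. Since the weighted sums inside both suprema are identical, this reduces to showing that the two normalizations are comparable, namely that $W_1(N) \simeq N\vp'(N)$ uniformly in $N \in \mathcal{D}$ for $N$ large. From \eqref{tetadef} we have $N \vp'(N) = \vp(N)(\g + \te(N))$ with $\te(N) \to 0$, so $N \vp'(N) \simeq \vp(N)$. The upper bound $W_1(N) \le (\log N)\, \pi_h(N) \lesssim \vp(N)$ is immediate from Leitmann's asymptotic \eqref{asympto}. For the lower bound I would exploit a dyadic increment: from $\vp(x) = x^{\g} \ell_{\vp}(x)$ with $\ell_{\vp}$ slowly varying (cf.\ \eqref{funfi} and \eqref{slowhfi}) one obtains $\vp(N/2) \sim 2^{-\g}\vp(N)$, so \eqref{asympto} gives $\pi_h(N) - \pi_h(N/2) \gtrsim \vp(N)/\log N$; hence
\[
W_1(N) \ge W_1(N) - W_1(N/2) \ge \log(N/2)\,\bigl(\pi_h(N) - \pi_h(N/2)\bigr) \gtrsim \vp(N).
\]
Combining the two bounds gives $W_1(N) \simeq N\vp'(N)$, hence $\mathcal{M}_1^{\star} f \simeq \mathcal{M}_h^1 f$, and stringing the estimates together produces the claimed bound $\mathcal{M}_h f \lesssim \mathcal{M}_h^1 f$.

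The argument is essentially bookkeeping once Lemma \ref{estmax} is in place; there is no serious analytic obstacle. The only mild point of care is that in the case $c = 1$ the factor $\ell_{\vp}$ is genuinely present and cannot be discarded, but the slow-variation estimates recorded in Section \ref{sectf} are exactly what is needed to push the dyadic lower bound through. Note also that since we are taking the supremum only over $N \in \mathcal{D} = \{2^n\}$, no subtleties arise from the behaviour of $W_1(N)$ at small $N$.
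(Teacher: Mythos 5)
Your argument is correct and follows the same skeleton as the paper's proof: first Lemma \ref{estmax} to dominate the unweighted averages by log-weighted ones, then an identification of the normalizing sum. The difference is in the second half. The paper takes $w_1(x)=\vp'(x)^{-1}\log x$ (so the weight on $p$ is $\vp'(p)^{-1}\log p$) and proves the sharper fact $\frac1N\sum_{p\in\mathbf{P}_{h,N}}\vp'(p)^{-1}\log p\to1$ by partial summation (Lemma \ref{sbp}), the asymptotic \eqref{asympto}, and a computation of $\big(\vp'(x)^{-1}\log x\big)'$ via Lemma \ref{funlemfi}; that limit is reused later to control $\|K^1_{h,N}\|_{\ell^1(\Z)}$. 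You take $w_1(x)=\log x$ and only need two-sided comparability, which you get more cheaply: a Chebyshev upper bound and a dyadic-increment lower bound giving $\sum_{p\in\mathbf{P}_{h,N}}\log p\simeq\vp(N)\simeq N\vp'(N)$. This is perfectly adequate for the stated pointwise inequality. Two small points to tighten: hypothesis (i) of Lemma \ref{estmax} asks for monotonicity of $w_2(n)/w_1(n)$ on all of $\N$, so adjust $w_1(1)$ (harmless, since $S\subseteq\mathbf{P}$ forces $n\ge2$); and the ratio $\vp(N/2)/\vp(N)\to2^{-\g}<1$ that your lower bound needs is best justified from $x\vp'(x)=\vp(x)(\g+\te(x))$ with $\te(x)\to0$ (integrate the logarithmic derivative over $[N/2,N]$), since \eqref{slowhfi} by itself does not give ratio-type slow variation; note also that \eqref{compfi} would only give comparability, not a ratio strictly below $1$.

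One substantive caveat: you prove the lemma exactly as printed, with the constant factor $\vp'(N)^{-1}$, i.e.\ with kernel $\frac{1}{N\vp'(N)}\sum_{p\in\mathbf{P}_{h,N}}\log p\ \delta_{W(p)}$. The paper's own proof, and the subsequent comparison of $K^1_{h,N}$ with $K^2_{h,N}$ through Lemma \ref{formlem} and the Plancherel step in Section \ref{sectmax}, use the $p$-dependent weight $\vp'(p)^{-1}\log p$ with normalization $1/N$; the $\vp'(N)^{-1}$ in the statement is evidently a misprint for $\vp'(p)^{-1}$. Your method adapts to that version with little extra effort (use that $\vp'$ is eventually decreasing and $\vp'(N/2)\simeq\vp'(N)$ to get $\sum_{p\in\mathbf{P}_{h,N}}\vp'(p)^{-1}\log p\simeq N$), but as written your version of the lemma would not feed directly into the later $\ell^2$ estimate, so you should either prove the $\vp'(p)^{-1}$ form or record the equivalence of the two kernels' normalizations.
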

\begin{proof}
We shall apply Lemma \ref{estmax} to the maximal functions
$$\mathcal{M}_1^{\star}f(x)=\sup_{N\in\mathcal{D}}\frac{1}{W_1(N)}\sum_{p\in\mathbf{P}_{h, N}}w_1(p)f(x-W(p)),$$
$$\mathcal{M}_2^{\star}f(x)=\sup_{N\in\mathcal{D}}\frac{1}{W_2(N)}\sum_{p\in\mathbf{P}_{h, N}}w_2(p)f(x-W(p)),$$
 with weights $w_1(x)=\vp'(x)^{-1}\log x$, $w_2(x)=1$ and sums $W_1(N)=\sum_{p\in\mathbf{P}_{h, N}}\vp'(p)^{-1}\log p$ and
 $W_2(N)=\pi_h(N)$. Therefore, Lemma \ref{estmax} yields that
  $$\mathcal{M}_hf(x)=\mathcal{M}_2^{\star}f(x)\lesssim\mathcal{M}_1^{\star}f(x).$$
  What is left is to show that
  $$\mathcal{M}_1^{\star}f(x)=\sup_{N\in\mathcal{D}}\frac{1}{W_1(N)}\sum_{p\in\mathbf{P}_{h, N}}\vp'(p)^{-1}\log p\ f(x-W(p))\simeq \mathcal{M}_h^1f(x).$$
   If we prove $\frac{W_1(N)}{N}\ _{\overrightarrow{N\to\8}}\ 1$ the assertion follows.
   We now apply Lemma \ref{sbp} with $g(x)=\vp'(x)^{-1}\log x$ and $U(x)=\sum_{p\in\mathbf{P}_{h, x}}1=\pi_h(x)$.
   Indeed,
  \begin{align*}
    \frac{1}{N}\sum_{p\in\mathbf{P}_{h, N}}\vp'(p)^{-1}\log p=\frac{1}{N}\pi_h(N)\vp'(N)^{-1}\log N
    -\frac{1}{N}\int_{2}^N\pi_h(x)\big(\vp'(x)^{-1}\log x\big)'dx,
  \end{align*}
  Since
  \begin{align*}
    \lim_{N\to\8}\frac{1}{N}\pi_h(N)\vp'(N)^{-1}\log N=1/\g,
  \end{align*}
 it remains to prove that
  $$\frac{1}{N}\int_{2}^N\pi_h(x)\big(\vp'(x)^{-1}\log x\big)'dx=\frac{1-\g}{\g}.$$
  Observe that
  \begin{align*}
    \big(\vp'(x)^{-1}\log x\big)'=\frac{\vp'(x)-x\vp''(x)\log x }{x\vp'(x)^2}=\frac{1- \rho_h(x)\log x}{\vp(x)(\g+\te_1(x))},
  \end{align*}
  where
  \begin{align*}
    \rho_h(x)=\left\{ \begin{array} {ll}
\g-1+\te_2(x), & \mbox{if $\g<1$,}\\
\s(x)\t(x),& \mbox{if $\g=1$.}
\end{array}
\right.
  \end{align*}
  This easily shows that
  \begin{align*}
    \lim_{x\to\8}\frac{\vp(x)}{\log x}\cdot\frac{1- \rho_h(x)\log x}{\vp(x)(\g+\te_1(x))}
    =\left\{ \begin{array} {ll}
\frac{1-\g}{\g}, & \mbox{if $\g<1$,}\\
\ \ 0,& \mbox{if $\g=1$,}
\end{array}
\right.
  \end{align*}
  which in turn implies that
  \begin{align*}
    \lim_{N\to\8}\frac{1}{N}\int_{2}^N\pi_h(x)\big(\vp'(x)^{-1}\log x\big)'dx=
    \lim_{N\to\8}\frac{1}{N}\int_{2}^N\frac{1- \rho_h(x)\log x}{\log x(\g+\te_1(x))}dx,
  \end{align*}
  and consequently
   \begin{align*}
    \lim_{N\to\8}\frac{1}{N}\int_{2}^N\frac{1- \rho_h(x)\log x}{\log x(\g+\te_1(x))}dx=\frac{1-\g}{\g}.
  \end{align*}
  This completes the proof of the lemma.
\end{proof}
\noindent We have reduced the matters to proving
\begin{align}\label{ker2}
  \|\mathcal{M}_h^1f\|_{\ell^r(\Z)}\lesssim_r\|f\|_{\ell^r(\Z)},
\end{align}
for every $f\in\ell^r(\Z)$, where $r>1$.
For this purpose let us define $\mathcal{M}_h^2f(x)=\sup_{N\in\mathcal{D}}|K_{h, N}^2*f(x)|$, where
\begin{align*}
  K_{h, N}^2(x)=\frac{1}{N}\sum_{p\in\mathbf{P}_N}\log p\ \delta_{W(p)}(x).
\end{align*}
Due to Bourgain--Wierdl--Nair's theorem we know that
$$\|\mathcal{M}_h^2f\|_{\ell^r(\Z)}\lesssim_r\|f\|_{\ell^r(\Z)},$$
for every $f\in\ell^r(\Z)$, where $r>1$. For more details we refer to \cite{Na1, Na2}, see also \cite{B3}, \cite{Wie}. Observe now that
\begin{align*}
  \|\mathcal{M}_hf\|_{\ell^r(\Z)}\lesssim\|\mathcal{M}_h^1f\|_{\ell^r(\Z)}&\le \|\mathcal{M}_h^2f\|_{\ell^r(\Z)}+\|\sup_{N\in\mathcal{D}}|(K_{h, N}^1-K_{h, N}^2)*f|\|_{\ell^r(\Z)}\\
  &\lesssim \|f\|_{\ell^r(\Z)}+\bigg\|\bigg(\sum_{N\in\mathcal{D}}|(K_{h, N}^1-K_{h, N}^2)*f|^r\bigg)^{1/r}\bigg\|_{\ell^r(\Z)}\\
  &\lesssim \|f\|_{\ell^r(\Z)}+\bigg(\sum_{N\in\mathcal{D}}\|(K_{h, N}^1-K_{h, N}^2)*f\|_{\ell^r(\Z)}^r\bigg)^{1/r}.
\end{align*}
The estimate \eqref{ker2} will be completed if we establish the following inequality
\begin{align*}
  \bigg(\sum_{N\in\mathcal{D}}\|(K_{h, N}^1-K_{h, N}^2)*f\|_{\ell^r(\Z)}^r\bigg)^{1/r}\lesssim \|f\|_{\ell^r(\Z)}.
\end{align*}
For this purpose we begin with $r=2$ and apply Plancherel theorem to $K_{h, N}^1-K_{h, N}^2$ which allows us to make use of Lemma \ref{formlem}.
Indeed,
\begin{align*}
  \|(K_{h, N}^1-K_{h, N}^2)*f\|_{\ell^2(\Z)}^2&=\int_0^1|\widehat{K}_{h, N}^1(\xi)-\widehat{K}_{h, N}^2(\xi)|^2\cdot|\widehat{f}(\xi)|^2d\xi\\
  &\le\sup_{\xi\in[0, 1]}|\widehat{K}_{h, N}^1(\xi)-\widehat{K}_{h, N}^2(\xi)|^2\int_0^1|\widehat{f}(\xi)|^2d\xi\\
  &=\|\widehat{K}_{h, N}^1-\widehat{K}_{h, N}^2\|_{L^{\8}([0, 1])}^2\|f\|_{\ell^2(\Z)}^2\lesssim N^{-2\chi }\|f\|_{\ell^2(\Z)}^2,
\end{align*}
since by Lemma \ref{formlem} we have
$$\|\widehat{K}_{h, N}^1-\widehat{K}_{h, N}^2\|_{L^{\8}([0, 1])}\lesssim N^{-\chi },$$
for some $\chi>0$ and every $N\in\mathcal{D}$. Now observe that
$$\|(K_{h, N}^1-K_{h, N}^2)*f\|_{\ell^r(\Z)}\le\|K_{h, N}^1-K_{h, N}^2\|_{\ell^1(\Z)}\|f\|_{\ell^r(\Z)}\lesssim\|f\|_{\ell^r(\Z)}.$$
Since from the proof of Lemma \ref{lempart1} we know
\begin{align*}
  \|K_{h, N}^1\|_{\ell^1(\Z)}&=\sum_{x\in \Z}\frac{1}{N}\sum_{p\in\mathbf{P}_{h, N}}\vp'(p)^{-1}\log p\ \d_{W(p)}(x)\\
&=\frac{1}{N}\sum_{p\in\mathbf{P}_{h, N}}\vp'(p)^{-1}\log p\ _{\overrightarrow{N\to\8}}\ 1,
\end{align*}
and the same reasoning applies to
$$\|K_{h, N}^2\|_{\ell^1(\Z)}=\frac{1}{N}\sum_{p\in\mathbf{P}_N}\log p\ _{\overrightarrow{N\to\8}}\ 1.$$
Therefore, Riesz--Thorin interpolation theorem yields that for every $1<r\le2$ there is $\chi_r>0$ such that
$$\|(K_{h, N}^1-K_{h, N}^2)*f\|_{\ell^r(\Z)}\lesssim N^{-\chi_r}\|f\|_{\ell^r(\Z)},$$
for every $N\in\mathcal{D}$. Finally we obtain the desired bounds
$$\bigg(\sum_{N\in\mathcal{D}}\|(K_{h, N}^1-K_{h, N}^2)*f\|_{\ell^r(\Z)}^r\bigg)^{1/r}\lesssim
\bigg(\sum_{N\in\mathcal{D}}N^{-\chi_rr}\bigg)^{1/r}\|f\|_{\ell^r(\Z)}\lesssim_r\|f\|_{\ell^r(\Z)}.$$
This completes the proof of Theorem \ref{maxithm}.

\section{Proof of Theorem \ref{ergthm}}\label{secterg}
The main aim of this section is to prove Theorem \ref{ergthm}. For this purpose we will proceed as follows.
First of all we show the pointwise convergence on $L^2(X,\mu)$, then we can easily extend the pointwise convergence of $A_{h, N}f(x)$ for all $f\in L^r(X,\mu)$, where $r>1$. We start with very simple observation based on summation by parts. Namely, if
\begin{align}\label{ergcon1}
  A_{h, N}^1f(x)=\frac{1}{N}\sum_{p\in\mathbf{P}_{h, N}} \vp'(p)^{-1}\log p\ f(T^{W(p)}x) \ _{\overrightarrow{N\to\8}}\ f^*(x) \ \ \mbox{for $\mu$ -- a.e. $x\in X$,}
\end{align}
then
\begin{align}\label{ergcon2}
 A_{h, N}f(x)=\frac{1}{\pi_h(N)}\sum_{p\in\mathbf{P}_{h, N}}f(T^{W(p)}x) \ _{\overrightarrow{N\to\8}}\ f^*(x) \ \ \mbox{for $\mu$ -- a.e. $x\in X$.}
\end{align}
Let $M_kf(x)=\sum_{p\in\mathbf{P}_{h, k}}f(T^{W(p)}x)$ and $M_k^1f(x)=\sum_{p\in\mathbf{P}_{h, k}}\vp'(p)^{-1}\log p\ f(T^{W(p)}x)$ and $M_1f(x)=M_1^1f(x)=0$. Let $m_k=\sum_{p\in\mathbf{P}_{h, k}}1=\pi_h(k)$ and
$m_k^1=\sum_{p\in\mathbf{P}_{h, k}} \vp'(p)^{-1}\log p$. Then, for $f\ge0$, we have
\begin{align*}
  \frac{1}{\pi_h(N)}\sum_{p\in\mathbf{P}_{h, N}}f(T^{W(p)}x)&=
  \frac{1}{\pi_h(N)}\sum_{p\in\mathbf{P}_{h, N}}\frac{\vp'(p)}{\log p}\vp'(p)^{-1}\log p\ f(T^{W(p)}x)\\
  &=\frac{1}{\pi_h(N)}\sum_{k=2}^N\frac{\vp'(k)}{\log k}\big(M_k^1f(x)-M_{k-1}^1f(x)\big)\\
  &=\frac{\vp'(N)}{\pi_h(N)\log N}M_N^1f(x)+
  \frac{1}{\pi_h(N)}\sum_{k=2}^{N-1}\left(\frac{\vp'(k)}{\log k}-\frac{\vp'(k+1)}{\log(k+1)}\right)M_k^1f(x)\\
  &=\frac{m_N^1\vp'(N)}{\pi_h(N)\log N}\frac{N}{m_N^1}A_{h, N}^1f(x)\\
  &+
  \frac{1}{\pi_h(N)}\sum_{k=2}^{N-1}\left(\frac{m_k^1\vp'(k)}{\log k}-\frac{m_k^1\vp'(k+1)}{\log(k+1)}\right)\frac{k}{m_k^1}A_{h, k}^1f(x).
\end{align*}
On the other hand
\begin{align*}
  \frac{m_N^1\vp'(N)}{\pi_h(N)\log N}f^*(x)+
  \frac{1}{\pi_h(N)}\sum_{k=2}^{N-1}\left(\frac{m_k^1\vp'(k)}{\log k}-\frac{m_k^1\vp'(k+1)}{\log(k+1)}\right)f^*(x)\\
  =\frac{1}{\pi_h(N)}\sum_{k=2}^{N}\frac{\vp'(k)}{\log k}\big(m_k^1-m_{k-1}^1\big)f^*(x)=f^*(x).
\end{align*}
Let $\e>0$ such that for every $N> N_0$ we have
$$\left|\frac{N}{m_N^1}A_{h, N}^1f(x)-f^*(x)\right|<\e/4.$$
Since, $\pi_h(N)\ _{\overrightarrow{N\to\8}}\ \8$, we see
\begin{multline*}
  \limsup_{N\to\8}\bigg|\frac{1}{\pi_h(N)}\sum_{p\in\mathbf{P}_{h, N}}f(T^{W(p)}x)-f^*(x)\bigg|
  \le\limsup_{N\to\8}\frac{m_N^1\vp'(N)}{\pi_h(N)\log N}\left|\frac{N}{m_N^1}A_{h, k}^1f(x)-f^*(x)\right|\\
  +
  \limsup_{N\to\8}\bigg(\frac{1}{\pi_h(N)}\bigg(\sum_{k=2}^{N_0}+\sum_{k=N_0+1}^{N-1}\bigg)
\left(\frac{m_k^1\vp'(k)}{\log k}-\frac{m_k^1\vp'(k+1)}{\log(k+1)}\right)\left|\frac{k}{m_k^1}A_{h, k}^1f(x)-f^*(x)\right|\bigg)
  \le \e,
\end{multline*}
and \eqref{ergcon2} is justified.

In order to prove \eqref{ergcon1} on $L^2(X, \mu)$ it suffices to show that
\begin{align}\label{ergmax}
  \big\|\sup_{N\in\N}|A_{h, N}^1f|\big\|_{L^2(X, \mu)}\lesssim\|f\|_{L^2(X, \mu)},
\end{align}
and
\begin{align}\label{ergosc}
  \sum_{j=1}^J\bigg\|\sup_{\genfrac{}{}{0pt}{}{N_j<N\le N_{j+1}}{N\in Z_{\e}}}|A_{h, N}^1f-A_{h, N_j}^1f|\bigg\|_{L^2(X, \mu)}\le o(J)\|f\|_{L^2(X, \mu)},
\end{align}
where $Z_{\e}=\{\lfloor(1+\e)^n\rfloor: n\in\N\}$ for some fixed $\e>0$ and $(N_j)_{j\in\N}$ is any rapidly
increasing sequence $2N_j<N_{j+1}$. Using transference principle as in \cite{B3} we see that \eqref{ergmax}
and \eqref{ergosc} can be transferred to $\Z$ and \eqref{ergmax} is equivalent to \eqref{ker2} (with $r=2$) from Section \ref{sectmax}. If it comes to \eqref{ergosc} we use Lemma \ref{formlem}. Indeed,
\begin{align*}
  \sum_{j=1}^J\bigg\|\sup_{\genfrac{}{}{0pt}{}{N_j<N\le N_{j+1}}{N\in Z_{\eps}}}|K^1_{h, N}*f-K^1_{h, N_j}*f|\bigg\|_{\ell^2(\Z)}
  &\lesssim \sum_{j=1}^J\bigg\|\sup_{\genfrac{}{}{0pt}{}{N_j<N\le N_{j+1}}{N\in Z_{\eps}}}|K^2_{h, N}*f-K^2_{h, N_j}*f|\bigg\|_{\ell^2(\Z)}\\
  &+\sum_{j=1}^J\bigg(\sum_{\genfrac{}{}{0pt}{}{N_j<N\le N_{j+1}}{N\in Z_{\eps}}}\left\|K^1_{h, N}*f-K^2_{h, N}*f\right\|^2_{\ell^2(\Z)}\bigg)^{1/2}\\
  &\lesssim o(J)\|f\|_{\ell^2(\Z)}+\sum_{j=1}^J N_j^{-\chi}\|f\|_{\ell^2(\Z)}\lesssim o(J)\|f\|_{\ell^2(\Z)},
\end{align*}
as desired. Since the first inequality follows from  \cite{Na2}, and the second one follows by Parseval's identity and Lemma \ref{formlem}. This completes the proof of Theorem \ref{ergthm}.

\section{Proof of Theorem \ref{asymptthm}}\label{sectasym}
This section is intended to prove Theorem \ref{asymptthm}. In particular we will be concerned with
showing \eqref{asymptineq}.  We shall apply  \eqref{form} with $W(x)=x$, $q=1$ and any $0<\g\le 1$ and $\chi>0$ such that $16(1-\g)+28\chi<1$. Let $R(N)$ be the number of representations of an odd $N\in\N$ as a sum of three primes $p_i\in\mathbf{P}_{h_i}$
where $i=1, 2, 3$. Let
\begin{align*}
  \widetilde{G}^i_N(\xi)=\sum_{p\in\mathbf{P}_{h_i, N}}\vp_i'(p)^{-1}\log p\ e^{2\pi i \xi p},\ \ \ \mbox{and}\ \ \ \widetilde{F}^i_N(\xi)=\sum_{p\in\mathbf{P}_N}\log p\ e^{2\pi i \xi p},
\end{align*}
for $i=1, 2, 3$. Lemma \ref{sbp} applied twice and Lemma \ref{formlem} yield
\begin{align*}
  G^i_N(\xi)&=\sum_{p\in\mathbf{P}_{h_i,N}}e^{2\pi i \xi p}=\sum_{p\in\mathbf{P}_{h_i,N}}\left(\vp_i'(p)^{-1}\log p\ e^{2\pi i \xi p}\right)\frac{\vp_i'(p)}{\log p}\\
  &
  =\widetilde{G}^i_N(\xi)\frac{\vp_i'(N)}{\log N}-\int_2^N\widetilde{G}^i_x(\xi)\left(\frac{\vp_i'(x)}{\log x}\right)'dx\\
  &=\widetilde{F}^i_N(\xi)\frac{\vp_i'(N)}{\log N}-\int_2^N\widetilde{F}^i_x(\xi)\left(\frac{\vp_i'(x)}{\log x}\right)'dx+O\left(\vp_i(N)N^{-\chi_i-\e_i'}\right)\\
  &=\sum_{p\in\mathbf{P}_N}\vp_i'(p)\ e^{2\pi i \xi p}+O\left(\vp_i(N)N^{-\chi_i-\e_i'}\right)=F^i_N(\xi)+O\left(\vp_i(N)N^{-\chi_i-\e_i'}\right),
\end{align*}
for some $\e_i'>0$ and $\chi_i>0$ as in Lemma \ref{formlem}.

Thus
\begin{align*}
  R(N)=\sum_{\genfrac{}{}{0pt}{}{p_1+p_2+p_3=N}{p_i\in\mathbf{P}_{h_i, N}}}1&=
  \int_0^1G^1_N(\xi)G^2_N(\xi)G^3_N(\xi)e^{-2\pi i \xi N}d\xi
  =\sum_{\genfrac{}{}{0pt}{}{p_1+p_2+p_3=N}{p_i\in\mathbf{P}_N}}\vp_1'(p_1)\vp_2'(p_2)
  \vp_3'(p_3)\\
  &+\int_0^1\left(G^1_N(\xi)G^2_N(\xi)G^3_N(\xi)-F^1_N(\xi)F^2_N(\xi)F^3_N(\xi)\right)e^{-2\pi i \xi N}d\xi.
\end{align*}
Analysis similar to that in the proof of Theorem 1 in \cite{BF} shows that
\begin{multline}\label{as1}
  \left|\int_0^1\left(G^1_N(\xi)G^2_N(\xi)G^3_N(\xi)-F^1_N(\xi)F^2_N(\xi)F^3_N(\xi)\right)e^{-2\pi i \xi N}d\xi\right|\\
  \le\sup_{\xi\in[0, 1]}\left|G^1_N(\xi)-F^1_N(\xi)\right|\left(\int_0^1|F^2_N(\xi)|^2d\xi\right)^{1/2}
  \left(\int_0^1|F^3_N(\xi)|^2d\xi\right)^{1/2}\\
  +\sup_{\xi\in[0, 1]}\left|G^2_N(\xi)-F^2_N(\xi)\right|\left(\int_0^1|G^1_N(\xi)|^2d\xi\right)^{1/2}
  \left(\int_0^1|F^3_N(\xi)|^2d\xi\right)^{1/2}\\
  +\sup_{\xi\in[0, 1]}\left|G^3_N(\xi)-F^3_N(\xi)\right|\left(\int_0^1|G^1_N(\xi)|^2d\xi\right)^{1/2}
  \left(\int_0^1|G^2_N(\xi)|^2d\xi\right)^{1/2}.
\end{multline}
By Parseval's identity we see
\begin{align}\label{as2}
  \left(\int_0^1|F^i_N(\xi)|^2d\xi\right)^{1/2}=\bigg(\sum_{p\in\mathbf{P}_N}\vp_i'(p)^2\bigg)^{1/2}
  \le\bigg(\sum_{p\in\mathbf{P}_N}\vp_i'(p)\bigg)^{1/2}\lesssim\left(\frac{\vp_i(N)}{\log N}\right)^{1/2},
\end{align}
and
\begin{align}\label{as3}
  \left(\int_0^1|G^i_N(\xi)|^2d\xi\right)^{1/2}=\bigg(\sum_{p\in\mathbf{P}_{h_i, N}}1\bigg)^{1/2}
  \lesssim\left(\frac{\vp_i(N)}{\log N}\right)^{1/2},
\end{align}
for every $i=1, 2, 3.$
Combining \eqref{as1}, \eqref{as2}, \eqref{as3} with Lemma \ref{formlem} we immediately obtain
\begin{align}\label{err1}
&\left|\int_0^1\left(G^1_N(\xi)G^2_N(\xi)G^3_N(\xi)-F^1_N(\xi)F^2_N(\xi)F^3_N(\xi)\right)e^{-2\pi i \xi N}d\xi\right|\\
\nonumber&\lesssim\frac{\vp_1(N)\vp_2(N)\vp_3(N)}{N\ \log^3 N}\left(\frac{N^{1-\chi_1-\e_1'}\log^2 N}{\vp_2(N)^{1/2}\vp_3(N)^{1/2}}+\frac{N^{1-\chi_2-\e_2'}\log^2 N}{\vp_1(N)^{1/2}\vp_3(N)^{1/2}}
+\frac{N^{1-\chi_3-\e_3'}\log^2 N}{\vp_1(N)^{1/2}\vp_2(N)^{1/2}}\right).
\end{align}
If at least two of $\g_1, \g_2, \g_3$ are not equal to $1$, then bearing in mind \eqref{asymptthmass} and the inequality $x^{\g_i-\d_i}\lesssim\vp_i(x)$ one can arrange $\chi_1=\frac 1 2 (1-\g_2)+\frac 1 2 (1-\g_3)>0$, $\chi_2=\frac 1 2 (1-\g_1)+\frac 1 2 (1-\g_3)>0$, $\chi_3=\frac 1 2 (1-\g_1)+\frac 1 2 (1-\g_2)>0$ and $\frac 1 2(\d_1+\d_2+\d_3)<\min\{\e_1', \e_2', \e_3'\}$, and observe that
\begin{align}\label{err2}
\frac{N^{1-\chi_1-\e_1'}\log^2 N}{\vp_2(N)^{1/2}\vp_3(N)^{1/2}}\lesssim
 N^{1-\chi_1-\e_1'-\frac{\g_2+\g_3}{2}+\frac{\d_2+\d_3}{2}}\log^2N\lesssim N^{-\d'},
\end{align}
for some $\d'>0$. The two remaining summands in \eqref{err1} have  decays of the same type.
If for every $k, l\in\{1, 2, 3\}$ we have $\g_k=1$ or $\g_l=1$, then there are two, let say $\g_2, \g_3$ such that $\g_2=1$ and $\g_3=1$, then we have the same bound as in \eqref{err2} with $\chi_1>0$ carried by Lemma \ref{formlem}.
 Therefore,
  \begin{align*}
    R(N)=\sum_{\genfrac{}{}{0pt}{}{p_1+p_2+p_3=N}{p_i\in\mathbf{P}_{h_i, N}}}1
  =\sum_{\genfrac{}{}{0pt}{}{p_1+p_2+p_3=N}{p_i\in\mathbf{P}_N}}\vp_1'(p_1)\vp_2'(p_2)
  \vp_3'(p_3)+o\left(\frac{\vp_1(N)\vp_3(N)\vp_3(N)}{N\ \log^3 N}\right).
  \end{align*}
  We are now in a position where we can easily derive \eqref{asymptineq}. Let $r(N)$ be the number of representations of an odd $N\in\N$ as a sum of three regular primes, i.e. $p_i\in\mathbf{P}$
where $i=1, 2, 3$. Then Vinogradov's theorem (see \cite{Nat} Chapter 8) provides an asymptotic formula
\begin{align*}
  r(N)=\sum_{\genfrac{}{}{0pt}{}{p_1+p_2+p_3=N}{p_i\in\mathbf{P}_{N}}}1=\frac{\mathfrak{S}(N)N^2}{2\log^3N}
  +o\left(\frac{N^2}{\log^3N}\right),
\end{align*}
where $\mathfrak{S}(N)$ is the singular series. Therefore, there exists $C=C(\g_1, \g_2,\g_3)>0$ such that
  \begin{align*}
    \sum_{\genfrac{}{}{0pt}{}{p_1+p_2+p_3=N}{p_i\in\mathbf{P}_N}}\vp_1'(p_1)\vp_2'(p_2)
  \vp_3'(p_3)\ge\vp_1'(N)\vp_2'(N)
  \vp_3'(N)\cdot r(N)
  \ge C\cdot \frac{\mathfrak{S}(N)\vp_1(N)\vp_2(N)
  \vp_3(N)}{N\log^3N},
  \end{align*}
for sufficiently large $N\in\N$. The proof of Theorem \ref{asymptthm} is completed.

\end{document}